\theoremstyle{plain}
\newtheorem{thm}{Theorem}[section]
\newtheorem{cor}[thm]{Corollary}
\newtheorem{lem}[thm]{Lemma}
\newtheorem{prop}[thm]{Proposition}
\theoremstyle{definition}
\newtheorem{defn}{Definition}
\newtheorem{exmp}{Example}
\theoremstyle{remark}
\newtheorem{rem}{Remark}
\def\R{\mathbb{R}}
\newcommand{\bu}{\mathbf{u}}
\newcommand{\bv}{\mathbf{v}}
\newcommand{\w}{\omega} 
\newcommand{\be}{\begin{equation}} 
	\newcommand{\ee}{\end{equation}} 
\newcommand\pkd{\mbox{\rm dim}_{\rm P}\,} 
\newcommand\hdd{\mbox{\rm dim}_{\rm H}\,} 
\newcommand\ubd{\overline{\mbox{\rm dim}}_{\rm B}\,} 
\newcommand\uid{\overline{\mbox{\rm dim}}_{\rm \theta}\,} 
\newcommand\lid{\underline{\mbox{\rm dim}}_{\rm \theta}\,} 
\newcommand{\red}[1]{{\color{red}#1}}
\title[Non-autonomous iterated function systems]{Dimensions and dimension spectra of Non-autonomous iterated function systems}
\date{}
\author[J. J. Miao]{Jun Jie Miao}
\address[J. J. Miao]{School of Mathematical Sciences,  Key Laboratory of MEA(Ministry of Education) \& Shanghai Key Laboratory of PMMP,  East China Normal University, Shanghai 200241, P.~R. China}
\email{jjmiao@math.ecnu.edu.cn}
\author[Tianrui Wang]{Tianrui Wang}
\address[Tianrui Wang]{School of Mathematical Sciences,  Key Laboratory of MEA(Ministry of Education) \& Shanghai Key Laboratory of PMMP,  East China Normal University, Shanghai 200241, P.~R. China}
\email{51265500036@stu.ecnu.edu.cn}
\begin{document}

\begin{abstract}
Non-autonomous iterated function systems are a generalization of iterated function systems. If the contractions in the system are conformal mappings, it is called a non-autonomous conformal iterated function system, and its attractor is called a non-autonomous conformal set.
In this paper, we study intermediate  dimension spectra of non-autonomous conformal sets which  provide a unifying framework for Hausdorff
and box-counting dimensions. First, we obtain the intermediate dimension spectra formula of non-autonomous conformal sets by using   upper and lower topological pressures. As a consequence,     we obtain  simplified forms of their Hausdorff, packing  and box dimensions. Finally,  we explore the Hausdorff dimensions of the non-autonomous infinite conformal iterated function systems which consists of  countably many conformal mappings at each level, and we provide the Hausdorff dimension formula under certain conditions
	\end{abstract}
	\maketitle

\section{Introduction}
\subsection{Dimensions of fractals}
Dimensions theory is central to fractal geometry, and  Hausdorff and box-counting dimensions are two fundamental notions used in fractal geometry and dynamical systems.  It is well-known that  Hausdorff and box-counting dimensions are identical for self-similar sets satisfying the open set condition, but there are also many interesting fractal sets with the two dimensions different. For example, the Hausdorff dimensions of many non-typical self-affine sets and Moran sets are strictly less than their (upper) box-counting dimensions; see  \cite{Baran07,Bedfo84, LalGa92,McMul84, Wen00} for details. The reason is because covering sets of widely ranging scales are permitted in the definition of Hausdorff dimensions, whereas  covering sets with the same size are essentially used in box-counting dimensions. We refer  readers to ~\cite{Fal,  Wen00} for background reading on fractal geometry. .

Recently, there is a large amount of  literature on dimension spectra which  provides a unifying framework for many   dimensions of fractal geometry; see \cite{Ban1, FFK,Fra1,FY} for various studies on dimension spectra. In \cite{FFK}, Falconer, Fraser and Kempton  introduced intermediate dimensions to provide a unifying framework for Hausdorff and box-counting dimensions. 	
	\begin{defn}
		Given a subset $E\subset \mathbb{R}^d$. For each $0\le \theta \le 1$, the lower and upper $\theta$-intermediate dimensions of $E$ are defined respectively by
		\begin{eqnarray*}
			\lid E&=&\inf\{  s\ge 0:\textit{for all }\varepsilon>0, \Delta>0, \textit{there exists } 0<\delta\le\Delta \textit{ and a cover $\{U_i\}$} \\
			&&\hspace{2cm} \textit{of $E$ such that } \delta^\frac{1}{\theta}\le \lvert U_i\rvert \le \delta, \sum \lvert U_i\rvert ^s\le \varepsilon\}, \\
			\uid E&=&\inf\{  s\ge 0:\textit{for all }\varepsilon>0,  \textit{ there exists } \Delta >0, \textit{such that for all }0<\delta\le\Delta \\
			&&\hspace{1cm}    \textit{ there is a cover $\{U_i\}$} \textit{of $E$ such that } \delta^\frac{1}{\theta}\le \lvert U_i\rvert \le \delta, \sum \lvert U_i\rvert ^s\le \varepsilon\}.
		\end{eqnarray*}
		If $\lid E=\uid E $, we call it the $\theta$-intermediate dimension of $E$, denoted by   $\dim_{\theta}E$.
	\end{defn}
Moreover, in \cite{FFK,Fra1}, the authors proved the continuity of the intermediate dimensions.
	\begin{prop}
		Given a bounded set  $E \subset \R^d$, the dimension spectra $\lid E$ and $\uid E$ are  continuous functions for  $\theta\in (0,1]$.
	\end{prop}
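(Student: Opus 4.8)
The plan is to derive continuity from two ingredients: monotonicity of $\theta\mapsto\overline{\dim}_\theta E$ and $\theta\mapsto\underline{\dim}_\theta E$ on $(0,1]$, and a comparison estimate that pinches each of these functions between a value and a nearby multiple of it. Throughout, call a cover $\{U_i\}$ of $E$ \emph{$\theta$-admissible at scale $\delta$} (for $0<\theta\le1$ and $0<\delta<1$) if $\delta^{1/\theta}\le|U_i|\le\delta$ for every $i$.

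\textbf{Step 1 (monotonicity).} If $0<\theta'\le\theta\le1$ then $\delta^{1/\theta'}\le\delta^{1/\theta}$, so every $\theta$-admissible cover at scale $\delta$ is also $\theta'$-admissible at scale $\delta$. Inserting this into the two infima in the definition yields $\underline{\dim}_{\theta'}E\le\underline{\dim}_\theta E$ and $\overline{\dim}_{\theta'}E\le\overline{\dim}_\theta E$; both spectra are non-decreasing in $\theta$.

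\textbf{Step 2 (comparison estimate).} I claim that for $0<\theta'<\theta\le1$,
\[
\overline{\dim}_\theta E\le\frac{\theta}{\theta'}\,\overline{\dim}_{\theta'}E,\qquad \underline{\dim}_\theta E\le\frac{\theta}{\theta'}\,\underline{\dim}_{\theta'}E.
\]
Fix $s>\overline{\dim}_{\theta'}E$ (resp. $s>\underline{\dim}_{\theta'}E$) and set $s'=s\theta/\theta'>s$. Given $\varepsilon>0$, the definition of $\overline{\dim}_{\theta'}E$ (resp. $\underline{\dim}_{\theta'}E$) provides a $\theta'$-admissible cover $\{U_i\}$ of $E$ at some scale $\delta$ with $\sum_i|U_i|^s\le\varepsilon$; note this forces the cover to be finite. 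Split the indices as $I_{\mathrm{big}}=\{i:|U_i|\ge\delta^{1/\theta}\}$ and $I_{\mathrm{sm}}=\{i:|U_i|<\delta^{1/\theta}\}$; keep the $U_i$ with $i\in I_{\mathrm{big}}$, whose diameters already lie in $[\delta^{1/\theta},\delta]$. Each $U_i$ with $i\in I_{\mathrm{sm}}$ is contained in a set $V_i$ of diameter exactly $\delta^{1/\theta}$, and, since $|U_i|\ge\delta^{1/\theta'}$ for all $i$, we have $\card(I_{\mathrm{sm}})\le\varepsilon\,\delta^{-s/\theta'}$. Then $\{U_i:i\in I_{\mathrm{big}}\}\cup\{V_i:i\in I_{\mathrm{sm}}\}$ is a $\theta$-admissible cover of $E$ at the \emph{same} scale $\delta$, and
\[
\sum_{i\in I_{\mathrm{big}}}|U_i|^{s'}+\sum_{i\in I_{\mathrm{sm}}}|V_i|^{s'}\;\le\;\varepsilon+\varepsilon\,\delta^{-s/\theta'}\bigl(\delta^{1/\theta}\bigr)^{s'}\;=\;2\varepsilon,
\]
where the first term uses $|U_i|\le1$ and $s'\ge s$, and the equality uses the identity $s'/\theta=s/\theta'$. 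As $\varepsilon>0$ is arbitrary and the scale $\delta$ is unchanged, the quantifier structure in the definitions of $\overline{\dim}$ and $\underline{\dim}$ transfers from $\theta'$ to $\theta$, giving $s'\ge\overline{\dim}_\theta E$ (resp. $s'\ge\underline{\dim}_\theta E$); letting $s$ decrease to $\overline{\dim}_{\theta'}E$ (resp. $\underline{\dim}_{\theta'}E$) yields the two claimed estimates.

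\textbf{Step 3 (conclusion).} Since $E$ is bounded, $\underline{\dim}_\theta E,\overline{\dim}_\theta E\in[0,d]$ are finite. Combining Steps 1 and 2, for $0<\theta'<\theta\le1$,
\[
\overline{\dim}_{\theta'}E\le\overline{\dim}_\theta E\le\frac{\theta}{\theta'}\,\overline{\dim}_{\theta'}E,\qquad \underline{\dim}_{\theta'}E\le\underline{\dim}_\theta E\le\frac{\theta}{\theta'}\,\underline{\dim}_{\theta'}E.
\]
Fix $\theta_0\in(0,1]$. Applying these inequalities with $(\theta',\theta)=(\theta,\theta_0)$ as $\theta\uparrow\theta_0$, and with $(\theta',\theta)=(\theta_0,\theta)$ as $\theta\downarrow\theta_0$ (the latter only needed when $\theta_0<1$), the ratio $\theta/\theta_0\to1$ squeezes $\overline{\dim}_\theta E\to\overline{\dim}_{\theta_0}E$ and $\underline{\dim}_\theta E\to\underline{\dim}_{\theta_0}E$. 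Hence both $\underline{\dim}_\theta E$ and $\overline{\dim}_\theta E$ are continuous for $\theta\in(0,1]$.

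The crux is Step 2: to turn a cover optimised for $\theta'$ into one legal for the larger parameter $\theta$ we must discard the too-small sets $\{U_i\}_{i\in I_{\mathrm{sm}}}$, of which there can be as many as $\varepsilon\,\delta^{-s/\theta'}$ — an enormous number as $\delta\to0$ — and replace them by sets of the much larger diameter $\delta^{1/\theta}$, which a priori blows up the covering sum. The device that saves the estimate is to raise the exponent from $s$ to $s'=s\theta/\theta'$: this is exactly the amount needed to keep the contribution of the replacement sets $O(\varepsilon)$, and $s'\to s$ as $\theta'\to\theta$, so nothing is lost in the limit. The same computation explains why no analogous statement holds at $\theta=0$: the factor $\theta/\theta'$ is unbounded as $\theta'\to0$.
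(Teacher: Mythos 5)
Your proof is correct. Note that the paper itself offers no proof of this proposition --- it is quoted from Falconer--Fraser--Kempton \cite{FFK} and Fraser \cite{Fra1} --- so there is nothing internal to compare against; measured against the literature, your route is a legitimate variant of the standard one. Your Step 2 estimate $\overline{\dim}_{\theta}E\le(\theta/\theta')\,\overline{\dim}_{\theta'}E$ is the multiplicative comparison bound from \cite{FFK} (proved there by exactly your device: fatten the sets of diameter in $[\delta^{1/\theta'},\delta^{1/\theta})$ to diameter $\delta^{1/\theta}$, count them by $\varepsilon\delta^{-s/\theta'}$, and absorb the loss by raising the exponent to $s\theta/\theta'$), whereas the continuity statement in \cite{FFK} is usually derived from the additive bound $\dim_{\theta}E\le\dim_{\theta'}E+(1-\theta'/\theta)(d-\dim_{\theta'}E)$. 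Either bound pinches $\dim_\theta E$ against $\dim_{\theta'}E$ as $\theta'\to\theta$ and yields continuity on $(0,1]$; your multiplicative version is sharper near dimension zero, the additive one near dimension $d$, and both degenerate at $\theta=0$, consistent with the known failure of continuity there. All the individual steps (monotonicity, the counting bound, the identity $s'/\theta=s/\theta'$, the preservation of the scale $\delta$ so that the quantifier structure transfers for both the upper and lower spectra, and the final squeeze using $\dim_\theta E\le d$ for bounded $E$) check out.
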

As we see from the definition, for $\theta = 0$, there are no restrictions for the size of the sets used in the covers which gives Hausdorff dimension. On the other hand, for $\theta = 1$, the only covers using sets of the same size are allowable which recovers box-counting dimension. Therefore, Hausdorff and box-counting dimensions are the special values of   intermediate dimensions $\dim_{\theta} E$, that is, 	
	$$
	\overline{\dim}_0 E =\underline{\dim}_0E=\hdd E, \quad  \overline{\dim}_1E=\overline{\dim}_B E, \quad   \underline{\dim}_1E=\underline{\dim}_B E,
	$$
It is  valuable to study intermediate dimension spectra for which $\theta$ the transition  occurs in geometric behavior, and this may deepen our understanding of  dimensions and the geometric structure of fractal sets; see \cite{BJ1, BanKol, BFF,Fra21, Kol1} for various related studies and applications.

Since intermediate dimension spectra provide a continuum between Hausdorff and box-counting dimensions, it is interesting to explore the dimensions spectra for the fractals with different Hausdorff and box-counting dimensions. In this paper, we study the dimensions and the intermediate dimension spectra of  non-autonomous conformal fractals generated by non-autonomous iterated function systems.

\subsection{Iterated function systems.} \label{subsec_IFS}
Let $\{\varphi_{i}\}_{i\in I} $ be a finite set of contractions on
$\R^{d}$ with $\#I\geq 2$. The set $\{\varphi_{i}\}_{i\in I}$ is called an {\em   iterated function system(IFS)}, and it has a unique
{\em attractor}, that is, a unique non-empty compact $E \subset \R^{d}$
such that
\begin{equation}\label{saattractor}
E = \bigcup_{i\in I} \varphi_{i}(E).
\end{equation}
If the contractions $\varphi_i$ are conformal $($similar or affine$)$ mappings, we call  $\{\varphi_{i}\}_{i\in I}$  a {\em   conformal $($similar or affine$)$  iterated function system}, and call $E$ a {\em self-conformal $($self-similar or self-affine$)$ set}. Note that self-similar sets are special cases of self-conformal and self-affine sets. See~\cite{Fal,  Wen00} for details.

Formulae giving the Hausdorff and box-counting dimensions of self-similar sets satisfying the open set condition are well-known. However, calculation of the dimensions of self-conformal and self-affine sets is more awkward. In \cite{DM}, Mauldin and Urbański studied self-conformal sets $E$, and defined a critical value $h$ by topological pressure $P(t)$ satisfying $h=\inf\{t: \overline P(t)<0\}=\sup\{t: \overline P(t)>0\}$. They obtained the following
\begin{equation} \label{eq_sch}
\hdd E=\ubd E=h.
\end{equation}
Moreover, they  extended  to  infinite conformal iterated function systems allowing $\# I=\infty$.  By approximating with finite subsystems,  they obtained $\hdd E=h$. In \cite{DM_1}, Mauldin and Urbański further studied the box and packing dimensions of infinite conformal iterated function systems.  Falconer in  \cite{Falco88} studied the dimensions of self-affine sets. He defined a critical value $d(T_1,\ldots,T_{\# I})$ by using  singular value functions, which is often called  \textit{affine dimension} or \textit{Falconer dimension}, and under certain conditions, he proved
\begin{equation}\label{sadimf}
\hdd E = \dim_B  E=\min \{d, d(T_1, \ldots, T_{\# I})\},
\end{equation}
almost surely.   We refer readers to~\cite{BHR19,Falco92,Falco05, Feng23, HocRap,  MorShm} for various related studies on iterated function systems.

Recently,  Banaji and Fraser in \cite{BJ1} studied the intermediate dimensions of infinite conformal sets and obtained that
$$
\uid E=\max\{h, \uid P\}  \qquad  \textit{for }\theta\in [0,1],
$$
where $P$ is any subset of $J$ that intersects $\varphi_i(J)$ in exactly one point for each $i\in\mathbb{N}$, and $h$ is given by \eqref{eq_sch}.

In this paper, we study the   fractal dimensions and dimension spectra of non-autonomous finite conformal sets, and  we extend our results to   non-autonomous infinite conformal iterated function systems.

	\subsection{Non-autonomous iterated function systems.} \label{subsecNIFS}

Non-autonomous iterated function systems may be regarded as a generalization of iterated function systems. In 1946, Moran first studied non-autonomous similar sets, also called Moran sets which is a special class of fractals generated by non-autonomous iterated function systems; see \cite{GM,MP}. First, we recall the definitions of  non-autonomous iterated function systems.

Let $\{I_{k}\}_{k\geq 1}$ be a sequence of index sets with $\# I_k\geq 2$ which is either finite or countable infinity. Given integers $k\ge l \ge 1$,  we write
\begin{equation}\label{finite I}
\Sigma_l^{k}=\{u_{l}u_{l+1}\ldots u_{k}: u_{j}\in I_j, j=l,l+1,\ldots, k \},
\end{equation}
and for simplicity, we set $ \Sigma^k=\Sigma^{k}_1 $ if $l=1$. We write
$$
\Sigma^{*}=\bigcup _{k=0}^{\infty }\Sigma^{k}
$$
for the set of all finite words with $
\Sigma^{0}=\{\emptyset \}$ containing only the empty word $\emptyset$.

Let $J\subset\mathbb{R}^d$  be a compact set with non-empty interior, and $J$ satisfies $\overline{\mbox{int}(J)}=J$. For each integer $k>0$, let $\Phi_k=\{\varphi_{k, i} \}_{i\in I_k}$ be a family of mappings $\varphi_{k, i}:J\to J$. We say the collection $\mathcal{J}=\{J_{\mathbf{u}}:\mathbf{u}\in \Sigma^*\}$ of closed subsets of $J$ fulfills the \textit{non-autonomous structure with respect to $\{\Phi_k\}_{k=1}^\infty$} if it satisfies the following conditions:
\begin{itemize}
\item[(i).]{\it Uniform contraction}: There exists $0<c<1$ such that for all integers $k>0$ and  all $i\in I_k$, 
\begin{equation} \label{def_uccdn}
|\varphi_{k, i}(x)-\varphi_{k, i}(y)|\le  c|x-y| \qquad \textit{ for all }  x, y\in J.
\end{equation}
\item[(ii).] For all integers $k>0$ and all $\mathbf{u}\in \Sigma^{k-1}$, the elements $J_{\mathbf{u}i}, i\in I_k$ of $\mathcal{J}$ are  subsets of $J_{\mathbf{u}}$. We write $J_{\emptyset }=J$ for the empty word $\emptyset $.

\item[(iii).] For each $\mathbf{u}=u_1\ldots u_k \in \Sigma^*$,  there exists $\w_{\bu} \in \R^d$ and $\Psi_\mathbf{u}: \mathbb{R}^{d}\rightarrow \mathbb{R}^{d}$  such that
\begin{equation}\label{basic set}
    J_{\mathbf{u}}=\Psi_{\mathbf{u}}(J)=\varphi_\bu(J)+\omega_\bu,
   \end{equation}
    where $\varphi_\bu=\varphi_{u_1}\circ \cdots \circ \varphi_{u_j} \cdots \circ \varphi_{u_k} (J)$ and $\varphi_{u_j}\in \Phi_j$.

\end{itemize}
We call  $\Phi=\{\Phi_k\}_{k=1}^\infty$   a \emph{non-autonomous iterated function system} and
\begin{equation}\label{att}
E=E(\Phi)=\bigcap_{k=1}^\infty \bigcup_{\mathbf{u}\in\Sigma^k}\Psi_{\mathbf{u}}(J)
\end{equation}
the \emph{non-autonomous attractor} of $\Phi$.

If $\#I_k<\infty$ for every $k\in\mathbb{N}_+$,  we say $\Phi$ is a \emph{non-autonomous finite iterated function system}(NFIFS) and $E$ is the \emph{non-autonomous finite set} of $\Phi$.      If $\#I_k=\infty$ for infinitely many  $k\in\mathbb{N}_+$,  we say $\Phi$ is a \emph{non-autonomous infinite iterated function system}(NIIFS) and $E$ is the \emph{non-autonomous infinite set} of $\Phi$. Since the non-autonomous infinite set $E$  of $\Phi$  is generated by iterations of contractions, without loss of generality, we assume that  $\#I_k=\infty$ for every  $k\in\mathbb{N}_+$  in $\Phi$.  We do not consider the case where  $\#I_k=\infty$ for only  finitely many  $k\in\mathbb{N}_+$.  Note that the attractors of non-autonomous infinite iterated function systems  may not be compact.

If for all integers $k\ge0$ and $\mathbf{u}\in\Sigma^{k},$ we have
$$
\mbox{int}(J_{\mathbf{u}i})\cap \mbox{int}(J_{\mathbf{u}j})=\emptyset,
$$
for all $i\not= j\in I_{k+1}$, we say that $\Phi$ (or $E$) satisfies the \emph{open set condition (OSC)}.
If  for all integers $k\ge0$ and $\mathbf{u}\in\Sigma^{k},$ we have that
$$
J_{\mathbf{u}i}\cap J_{\mathbf{u}j}=\emptyset,
$$
for all $i\not= j\in I_{k+1}$, we say that $\Phi$ (or $E$) satisfies the \emph{strong separation condition (SSC)}.

If for each integer $k>0$, every mapping $\varphi_{k, j}$ in $\Phi_k$ is a similarity with contraction ratio $c_{k, j}$, and $\Phi$ satisfies OSC, then the attractor $E$ is called a \emph{Moran set}, which was first studied by Moran \cite{MP} in 1946. In 1994, Hua first studied the Hausdorff dimension of Moran sets,  and later, Hua and others showed that if
$$
\lim_{k\to +\infty} \frac{\log{\underline{c}_k}}{\log{M_k}}=0,
$$
where $\underline{c}_k=\min_{1\le j \le \# I_k} \{c_{k,j}\}$ and $M_k=\max_{\mathbf{u} \in \Sigma^k} \lvert J_\mathbf{u} \rvert$, then
\begin{equation} \label{MSDF}
\hdd E=s_{\ast }=\liminf_{k\rightarrow \infty }s_{k}, \quad \pkd E=\ubd E=s^{\ast }=\limsup_{k\rightarrow \infty }s_{k},
\end{equation}
where $s_{k}$ is the unique real solution to $\prod\nolimits_{i=1}^{k}\left(\sum\nolimits_{j=1}^{\# I_i}(c_{i,j})^{s}\right)=1 .$
We refer readers to  \cite{Hua, HL96,   HRWW, Wen00} for details.  There are various generalizations of such fractals. In \cite{GM}, Gu and Miao studied the general theory of non-autonomous iterated function systems. In \cite{HZ}, Holland and Zhang studied the Hausdorff dimension of a class of special non-autonomous iterated function systems and the box and packing dimension of a class of generalized non-autonomous conformal sets.  We refer readers to  \cite{GM22, GHM,LLMX, Wen00} for other related works.

	\subsection{Non-autonomous conformal iterated function systems.} \label{subsec_NCIFS}
In this paper, we mainly focus on the dimension theory of non-autonomous conformal sets.

 Let $V\subset \mathbb{R}^d$ be an open set and $\phi:V \to V$ be a conformal map.  We denote by  $D\phi(x)$ the
 derivative of $\phi$  at $x$, i.e., $D\phi(x): \R^d\to \R^d  $ is a similarity linear map, and
 we denote by  $\|D\phi(x)\| $ the scaling factor at $x$. For non-autonomous conformal systems, we write
$$
 \|D\phi\| =\sup\{\|D\phi(x)\| : x \in J\}.
$$   
Let $\Phi$ be a non-autonomous finite (or infinite) iterated function system satisfying that
\begin{itemize}
\item[(i').]{\it Conformality}: There exists an open connected set $V$ independent of $k$ with $J\subset V$ such that each $\varphi_{k, j}$ extends to a $C^1$ conformal diffeomorphism of $V$ into $V$.
\item[(ii').]{\it Bounded distortion}: There exists a constant $C\ge 1$ such that for all $\bu=u_lu_{l+1}\ldots u_k\in\Sigma_l^k$, the map $\varphi_\mathbf{u}=\varphi_{l, u_l}\circ \varphi_{l+1, u_{l+1}}\circ \dots\circ \varphi_{k, u_k}$ satisfies
$$
\|D\varphi_{\bu}(x)\|\le C\|D\varphi_{\bu}(y)\|
$$
for all $x, y\in V$.
\end{itemize}
We say $\Phi=\{\Phi_k\}_{k=1}^\infty$ is a \emph{non-autonomous finite (or infinite) conformal iterated function system(NFCIFS or NICIFS)}, and we call
\begin{equation}\label{att}
E=E(\Phi)=\bigcap_{k=1}^\infty \bigcup_{\mathbf{u}\in\Sigma^k}J_\bu
\end{equation}
is the \emph{ non-autonomous finite (or infinite) conformal set} of $\Phi$.

Let $\Phi$ be the NFCIFS and $E$  the corresponding attractor given by \eqref{att}.   Let
\begin{equation}\label{Mk}
\begin{split}
&m_k=\inf_{\mathbf{u} \in \Sigma^k} \{\|D\varphi_\mathbf{u}\|\}, \qquad \quad M_k=\sup_{\mathbf{u} \in \Sigma^k} \{\|D\varphi_\mathbf{u}\|\}, \\
&\underline c_k=\inf_{1\le j \le \# I_k} \{\|D\varphi_{k, j}\|\}, \quad  \quad \overline c_k=\sup_{1\le j \le \# I_k} \{\|D\varphi_{k, j}\|\}.
\end{split}
\end{equation}
In this paper, we study the non-autonomous finite conformal iterated function systems  $\Phi$ satisfying open set condition and
\begin{equation}\label{condition}
\lim_{k\to +\infty} \frac{\log\underline c_k}{\log M_k}=0,
\end{equation}
which plays a fundamental role in the study  of Moran fractals; see \cite{Wen00} for details.  In \cite{RM}, Rempe-Gillen and Urbański studied Hausdorff dimensions for non-autonomous conformal sets under the different assumption
\begin{equation}\label{cdn_RU}
\lim_{k\to\infty}\frac{\log \#I_k}{k}=0,
\end{equation}
and they provided  the Hausdorff dimension  formula for such  sets.  However, the conditions \eqref{condition} and \eqref{cdn_RU} do not imply each other;  see Example \ref{ex_nec}.
   Inspiring by their work,  we  further relax  the condition \eqref{condition}  to
\begin{equation}\label{def_cmc}
\lim_{k\to +\infty} \frac{\log\overline c_k-\log\#I_k}{\log M_k}=0.
\end{equation}
See Proposition \ref{lb packing} and Corollary  \ref{thm_Hdim g} for details. Under the cone condition, we make certain generalization of the conclusions in \cite{RM}; see  Corollary \ref{mphi} and Corollary \ref{Cor_RUI}.

Next, we provide an example to illustrate the difference to  condition \eqref{condition}, condition \eqref{def_cmc} and condition \eqref{cdn_RU}.
\begin{exmp} \label{ex_nec}
Let  $E_1$ be a   Moran set with $\#I_k =2^k$ and $c_{k,j} =\frac{1}{3^{k+1}}$. It is clear that
$$
\lim_{k\to +\infty} \frac{\log\underline c_k}{\log M_k}=0 \quad \textit{ and} \quad  \lim_{k\to\infty}\frac{\log \#I_k}{k}=\log 2.
$$
Then by \eqref{MSDF},  we have  $\hdd E_1=\frac{\log 2}{\log 3}.$

Let  $E_2$ be a Moran set with $\#I_k =2^k$ ,$c_{k,1}={\frac{1}{3^{k+1}}}$ and $c_{k,j}={\frac{1}{3^{k(k+1)}}}  (2\le j\le 2^k)$. It is straightforward that
$$
\lim_{k\to +\infty} \frac{\log\underline c_k}{\log M_k} =2, \quad \qquad \lim_{k\to\infty}\frac{\log \#I_k}{k}=\log 2,  \qquad  \quad
\lim_{k\to +\infty} \frac{\log\overline c_k-\log\#I_k}{\log M_k} =0.
$$
By Theorem \ref{thm_Hdim g}, we have $\hdd E_2=0$.

Let $E_3$ be a homogeneous Moran set with $\#I_k =2$ ,$c_{1}={\frac{1}{2}}$, $c_{2}={\frac{1}{4}}$ and $c_{k}=c_1c_2\cdots c_{k-1} (k\ge 3)$. Then it follows that
$$
\lim_{k\to\infty}\frac{\log \#I_k}{k}=0  \quad \textit{ and} \quad \lim_{k\to +\infty} \frac{\log\overline c_k-\log\#I_k}{\log M_k} =\frac{1}{2}.
$$
By simple calculation, we have $\hdd E_3=0$.
\end{exmp}

\section{Main conclusions}

Let $E$ be the non-autonomous  conformal set of the non-autonomous finite iterated function system  $\Phi$, that is, $\# I_k <\infty$ for all $k>0$. First,  we give  intermediate dimension spectra formula for   $E$.
For real $\delta>0$ and  $\theta\in(0,1]$, let
$$
\mathcal{M}(\delta, \theta)=\{\mathcal{M}:\mathcal{M} \textit{ is a cut set satisfying }\delta^\frac{1}{\theta} < \|D\varphi_{\mathbf{u}^*}\| \textit{ and }  \|D\varphi_\mathbf{u}\|\le \delta \textit { for each } \mathbf{u} \in \mathcal{M}\},
$$
and we write
$$
k_\delta=\min\{|\mathbf{u}| : \|D\varphi_\mathbf{u}\|\le\delta< \|D\varphi_{\mathbf{u}^*}\|,  \mathbf{u} \in \Sigma^* \}.
$$
For $\theta\in(0,1]$, upper and lower pressure functions are given by
\begin{equation}\label{def_PTF}
\begin{split}
\overline P(t, \theta) &= \limsup_{\delta\to 0} \frac{1}{k_{\delta}}\log \min_{\mathcal{M}\in\mathcal{M}(\delta, \theta)}\{\sum_{\mathbf{u}\in\mathcal{M}}\|D\varphi_\mathbf{u}\|^t\},  \\
\underline P(t, \theta) & = \liminf_{\delta\to 0} \frac{1}{k_{\delta}}\log \min_{\mathcal{M}\in\mathcal{M}(\delta, \theta)}\{\sum_{\mathbf{u}\in\mathcal{M}}\|D\varphi_\mathbf{u}\|^t\}).
\end{split}
\end{equation}
We write their jump points respectively as
\begin{equation}\label{def_stheta}
\begin{split}
s^\theta &=\inf\{t:\overline P(t, \theta)<0\}=\sup\{t:\overline P(t, \theta)>0\}, \\
s_\theta &=\inf\{t:\underline P(t, \theta)<0\}=\sup\{t:\underline P(t, \theta)>0\}.
\end{split}
\end{equation}
Note that  the existence of $s^\theta$ and $s_\theta$ follows from the monotonicity of $\underline P(t)$ and $\overline P(t)$; see  Lemma \ref{ddx1} in section \ref{sec_PF}.
When $\theta=0$, we write
$$
s_0=s^0=\hdd E.
$$

\begin{thm}\label{thm_IMdim}
Let $E$ be the non-autonomous finite conformal set  satisfying \eqref{condition} and OSC.
Then upper and lower intermediate dimension of $E$  are given by
$$
\uid E=s^\theta, \qquad
\lid E=s_\theta     \qquad  \textit{for } \theta\in[0,1],
$$
where $s^\theta$ and $s_\theta$ are given by \eqref{def_stheta}.
\end{thm}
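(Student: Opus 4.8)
The plan is to identify $\uid E$ and $\lid E$ with the symbolic jump points $s^\theta$, $s_\theta$ of \eqref{def_stheta} by comparing, in both directions, arbitrary economical covers of $E$ with the ``canonical'' covers $\{J_\mathbf{u}:\mathbf{u}\in\mathcal{M}\}$ attached to cut sets $\mathcal{M}\in\mathcal{M}(\delta,\theta)$. The bridge between the geometric and the symbolic picture is bounded distortion: there is $c_0\in(0,1]$ with $c_0\|D\varphi_\mathbf{u}\|\le|J_\mathbf{u}|\le c_0^{-1}\|D\varphi_\mathbf{u}\|$ for all $\mathbf{u}\in\Sigma^*$, so that $\sum_{\mathbf{u}\in\mathcal{A}}|J_\mathbf{u}|^t\asymp\sum_{\mathbf{u}\in\mathcal{A}}\|D\varphi_\mathbf{u}\|^t$ for any word family $\mathcal{A}$. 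The case $\theta=0$ is immediate from the conventions ($s_0=s^0=\hdd E$ and $\underline{\dim}_0E=\overline{\dim}_0E=\hdd E$), so fix $\theta\in(0,1]$; existence and monotonicity of $s^\theta,s_\theta,\overline P(\cdot,\theta),\underline P(\cdot,\theta)$ come from Lemma~\ref{ddx1}. I will argue $\uid E=s^\theta$; the proof that $\lid E=s_\theta$ is verbatim the same on replacing ``for all small $\delta$'' by ``along a sequence $\delta_n\to0$'' wherever the $\limsup$ in \eqref{def_PTF} is used.

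\emph{Upper bound $\uid E\le s^\theta$.} Let $s>s^\theta$, so $\overline P(s,\theta)<0$; by \eqref{def_PTF} there are $c>0$, $\Delta>0$ with $\min_{\mathcal{M}\in\mathcal{M}(\delta,\theta)}\sum_{\mathbf{u}\in\mathcal{M}}\|D\varphi_\mathbf{u}\|^s\le e^{-c\,k_\delta}$ for $0<\delta\le\Delta$, and $k_\delta\to\infty$ as $\delta\to0$. Choose a minimising $\mathcal{M}_\delta$; its cells satisfy $|J_\mathbf{u}|\le\delta$ and $|J_{\mathbf{u}^*}|>\delta^{1/\theta}$. Using \eqref{condition} (which forces $\underline c_k=\delta^{o(1)}$ for every level $k$ that can occur, so a single contraction cannot step over the window $(\delta^{1/\theta},\delta]$) one enlarges the few cells of diameter below $\delta^{1/\theta}$ to containing sets of diameter in $[\delta^{1/\theta},\delta]$ at the cost of only a sub-exponential-in-$k_\delta$ factor in the $s$-sum, producing a genuine cover of $E$ by sets in this window with $s$-sum still $\le c_0^{-s}e^{-c\,k_\delta+o(k_\delta)}\to0$ (for $\theta=1$ this is the familiar robustness of the box dimension). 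Hence $s$ lies in the defining set of $\uid E$, so $\uid E\le s$; letting $s\downarrow s^\theta$ gives $\uid E\le s^\theta$.

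\emph{Lower bound $\uid E\ge s^\theta$.} Suppose $\uid E<t<s^\theta$ for some $t$. The defining set of $\uid E$ is an up-set and contains $t$, so, taking $\varepsilon=1$, there is $\Delta>0$ such that every $0<\delta\le\Delta$ admits a finite cover $\{U_i\}$ of $E$ with $\delta^{1/\theta}\le|U_i|\le\delta$ and $\sum_i|U_i|^t\le1$. On the other hand $t<s^\theta$ gives $\overline P(t,\theta)>0$, so there are $L>0$ and $\delta_n\downarrow0$ with $\min_{\mathcal{M}\in\mathcal{M}(\delta_n,\theta)}\sum_{\mathbf{u}\in\mathcal{M}}\|D\varphi_\mathbf{u}\|^t\ge e^{L\,k_{\delta_n}}$. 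Fix such a $\delta_n$ and a corresponding cover $\{U_i\}$; for each $i$ put $r_i=|U_i|$ and let $\mathcal{N}_i$ be the words of the cut set at level $r_i$ whose cell meets $U_i$. By the open set condition, the conformality of the $\varphi_{k,j}$ (each $J_\mathbf{u}$ contains a ball of radius $\asymp|J_\mathbf{u}|$) and a volume/packing estimate, $\#\mathcal{N}_i\le N_n$ uniformly in $i$, where $N_n$ is controlled by the reciprocals of the $\underline c_k$ over the finitely many occurring levels; crucially $\tfrac1{k_{\delta_n}}\log N_n\to0$, since \eqref{condition} with uniform contraction forces $\tfrac1k\log\underline c_k\to0$. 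Then $\bigcup_i\mathcal{N}_i$ covers $E$ and, after deleting non-maximal words, becomes an antichain $\mathcal{M}'\in\mathcal{M}(\delta_n,\theta)$ with
\[
\sum_{\mathbf{u}\in\mathcal{M}'}\|D\varphi_\mathbf{u}\|^t
\ \le\ \sum_i\sum_{\mathbf{u}\in\mathcal{N}_i}\|D\varphi_\mathbf{u}\|^t
\ \le\ c_0^{-t}\,N_n\sum_i r_i^t
\ \le\ c_0^{-t}\,N_n .
\]
Combining with the lower bound on the minimum, $e^{L\,k_{\delta_n}}\le c_0^{-t}N_n$, hence $L\le\tfrac1{k_{\delta_n}}\log(c_0^{-t}N_n)\to0$, a contradiction. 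Thus $\uid E\ge s^\theta$, so $\uid E=s^\theta$, and the same argument yields $\lid E=s_\theta$.

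\emph{Main obstacle.} The crux, and the step I expect to wrestle with most, is the passage between an arbitrary economical cover and a member of $\mathcal{M}(\delta,\theta)$: controlling the covering multiplicity $\#\mathcal{N}_i$ \emph{uniformly} in $i$ and showing the incurred factor $N_n$ is \emph{sub-exponential in} $k_\delta$. This, together with the dual reconciliation of the window $[\delta^{1/\theta},\delta]$ with the spread of sizes of cut-set cells in the upper bound, is exactly where hypothesis \eqref{condition} enters in an essential way — Example~\ref{ex_nec} shows the dimension formula genuinely fails without it. The remaining ingredients — non-emptiness of $\mathcal{M}(\delta,\theta)$ for small $\delta$, the monotonicity facts of Lemma~\ref{ddx1}, the endpoint $\theta=1$ via box-dimension robustness, and (if one prefers not to treat endpoints separately) the continuity of $\theta\mapsto\uid E,\lid E$ — are routine.
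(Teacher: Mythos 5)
Your overall architecture coincides with the paper's (compare arbitrary economical covers with cut sets in $\mathcal{M}(\delta,\theta)$ in both directions, pass between $|J_\mathbf{u}|$ and $\|D\varphi_\mathbf{u}\|$ by bounded distortion, fatten undersized cut-set cells for the upper bound, control multiplicities by a volume argument under OSC), but the quantitative bookkeeping does not close, and it fails exactly at the step you flagged as the crux. The error is the assertion that \eqref{condition} together with uniform contraction forces $\tfrac1k\log\underline c_k\to0$. Condition \eqref{condition} normalises $\log\underline c_k$ by $\log M_k$, not by $k$, and $M_k$ may decay superexponentially: uniform contraction only gives $M_k\le c^k$, which bounds $|\log M_k|$ from \emph{below} by $k\log(1/c)$. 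For instance, if every level-$k$ ratio equals $e^{-k^2}$, then $\underline c_k=e^{-k^2}$, $M_k\asymp e^{-k^3/3}$, so \eqref{condition} holds while $\tfrac1k\log\underline c_k=-k\to-\infty$ and $\underline c_k^{-d}=e^{dk^2}$ is super-exponential in $k$. Consequently neither of your two key claims is justified: the covering multiplicity at level $k$ is bounded only by a constant times $\underline c_k^{-d}$ (Lemma \ref{finite intersection} controls the \emph{weighted} sum $\sum_k\underline c_k^d\,\#D(F,k)$, not the raw count), and neither $N_n$ nor the fattening cost $\bigl(|J_{\mathbf{u}^*}|/|J_\mathbf{u}|\bigr)^s\lesssim\underline c_{|\mathbf{u}|}^{-s}$ is $e^{o(k_\delta)}$ in general. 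Hence the contradiction ``$L\le\tfrac1{k_{\delta_n}}\log(c_0^{-t}N_n)\to0$'' and the bound ``$c_0^{-s}e^{-c\,k_\delta+o(k_\delta)}\to0$'' are both unproved.

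The repair --- and this is how the paper actually uses \eqref{condition} --- is to sacrifice an arbitrarily small $\eta>0$ in the exponent and absorb the offending factors into a power of $M_k$ rather than into $e^{o(k_\delta)}$. For the lower bound, \eqref{condition} gives $M_k^\eta<\underline c_k^{d}$ for all large $k$, so each $\mathbf{u}\in D(U,k)$ satisfies $|J_\mathbf{u}|^\alpha\lesssim M_k^\eta|U|^{\alpha-\eta}\le\underline c_k^{d}|U|^{\alpha-\eta}$; summing against Lemma \ref{finite intersection} yields $\sum_{\mathbf{u}\in A(U)}|J_\mathbf{u}|^\alpha\le C''|U|^{\alpha-\eta}$, and comparing with $\min_{\mathcal{M}}\sum\|D\varphi_\mathbf{u}\|^\alpha>1$ (positivity of the pressure suffices; no growth rate $e^{Lk_\delta}$ is needed) gives $\sum_U|U|^{\alpha-\eta}\ge\epsilon_0>0$, hence $\lid E\ge\alpha-\eta$. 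Dually, for the upper bound one takes two exponents $\beta>\gamma>s_\theta$, notes that the fattened set satisfies $|U_\mathbf{u}|\le|J_{\mathbf{u}^*}|\lesssim\underline c_{|\mathbf{u}|}^{-1}\|D\varphi_\mathbf{u}\|$, and uses \eqref{condition} in the form $\underline c_k^{-\beta}M_k^{\beta-\gamma}<1$ to get $\sum|U_\mathbf{u}|^\beta\lesssim\sum\|D\varphi_\mathbf{u}\|^\gamma<1$. Your argument becomes correct once the $e^{o(k_\delta)}$ accounting is replaced throughout by this exponent-drop traded against $M_k$.
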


The box and Hausdorff dimensions of non-autonomous finite conformal sets may be given in a simpler way. For non-autonomous finite (or infinite) conformal iterated function systems, we  define upper and lower pressures by
\begin{equation}\label{def_PFT2}
\overline P(t)=\limsup_{k\to \infty} \frac{1}{k}\log \sum_{\mathbf{u}\in \Sigma^k}\|D\varphi_\mathbf{u}\|^t,\qquad
\underline P(t)=\lim\inf_{k\to \infty} \frac{1}{k}\log \sum_{\mathbf{u}\in \Sigma^k}\|D\varphi_\mathbf{u}\|^t.
\end{equation}
We write their jump points respectively as
\begin{equation}\label{def_s*}
\begin{split}
s^* &=\inf\{t: \overline P(t)<0\}=\sup\{t: \overline P(t)>0\}, \\
s_* &=\inf\{t: \underline P(t)<0\}=\sup\{t: \underline P(t)>0\}.
\end{split}
\end{equation}
The next conclusion shows that the critical value $s^*$ always gives the box and packing dimensions  of non-autonomous finite conformal sets.
\begin{thm}\label{thm_PuBdim}
Let $E$ be the non-autonomous finite conformal set   satisfying \eqref{condition} and OSC.
Then the upper box and packing dimensions  of $E$ are given by
$$
\pkd E=\ubd E=s^*,
$$
where $s^*$ is given by \eqref{def_s*}.
\end{thm}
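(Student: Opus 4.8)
The plan is to establish the two equalities $\ubd E = s^*$ and $\pkd E = s^*$ separately, deriving both from the intermediate-dimension machinery of Theorem \ref{thm_IMdim} together with standard comparisons. First I would observe that $\ubd E = \uid[1]E$ by the general identity $\overline{\dim}_1 E = \overline{\dim}_B E$ recalled in the introduction, so by Theorem \ref{thm_IMdim} it suffices to show $s^1 = s^*$; that is, the pressure $\overline P(t,1)$ defined via cut sets at a single scale has the same jump point as $\overline P(t)$ defined via the full level sets $\Sigma^k$. When $\theta = 1$, the family $\mathcal{M}(\delta,1)$ consists of cut sets $\mathcal{M}$ with $\delta < |J_{\mathbf{u}^*}|$ and $|J_{\mathbf{u}}| \le \delta$, which are "antichains at scale $\delta$." Using the uniform contraction bound \eqref{def_uccdn}, the bounded distortion property (ii'), and condition \eqref{condition}, one shows that every $\mathbf{u}$ in such a cut set has length comparable to $k_\delta$ — more precisely $|\mathbf{u}| = k_\delta(1+o(1))$ as $\delta \to 0$ — because the factor $\log\underline c_k/\log M_k \to 0$ prevents the contraction ratios from being wildly inhomogeneous across a level. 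Consequently $\sum_{\mathbf{u}\in\mathcal{M}} \|D\varphi_{\mathbf{u}}\|^t$ is, up to subexponential factors in $k_\delta$, comparable to $\sum_{\mathbf{u}\in\Sigma^{k_\delta}} \|D\varphi_{\mathbf{u}}\|^t$ (the minimizing cut set is essentially a full level), so $\overline P(t,1) = \overline P(t)$ and hence $s^1 = s^*$.

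For the packing dimension, I would use the general inequality $\pkd E \le \ubd E = s^*$, so only the lower bound $\pkd E \ge s^*$ remains. Here I would construct, for each $t < s^*$, a subset $F \subseteq E$ (or work with $E$ directly) supporting a measure, or more simply exploit the fact that $\overline P(t) > 0$ for $t < s^*$ to find a sequence of levels $k_n \to \infty$ along which $\sum_{\mathbf{u}\in\Sigma^{k_n}}\|D\varphi_{\mathbf{u}}\|^t$ grows exponentially. Passing to the corresponding cylinder sets $J_{\mathbf{u}}$ and using bounded distortion to control $|J_{\mathbf{u}}|$ by $\|D\varphi_{\mathbf{u}}\|$ up to the constant $C$, a standard mass-distribution / potential-theoretic argument (as in the treatment of Moran sets, cf.\ \eqref{MSDF}) produces a Borel probability measure $\mu$ on $E$ with $\liminf_{r\to 0}\frac{\log\mu(B(x,r))}{\log r} \ge t$ for $\mu$-a.e.\ $x$ along these scales; since packing dimension admits a lower bound via such lower local dimension estimates (for upper box dimension one needs the estimate along a subsequence of scales, which matches the $\limsup$ structure of $\overline P$), one concludes $\pkd E \ge t$ for all $t < s^*$, hence $\pkd E \ge s^*$.

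The main obstacle I anticipate is the passage between the "single-scale cut set" formulation of the pressure and the "fixed combinatorial level $\Sigma^k$" formulation — i.e.\ proving $s^1 = s^*$ rigorously. The subtlety is that a cut set $\mathcal{M} \in \mathcal{M}(\delta,1)$ can mix words of different lengths, and one must show both (a) that the extra freedom does not let the sum $\sum_{\mathbf{u}\in\mathcal{M}}\|D\varphi_{\mathbf{u}}\|^t$ drop below the level-$k_\delta$ value by more than a subexponential factor, and (b) that conversely restricting to $\Sigma^{k_\delta}$ does not overcount. Both directions hinge on the quantitative control $\log \underline c_k = o(\log M_k)$ from \eqref{condition}: it guarantees that $\log|J_{\mathbf{u}}|$ for $|\mathbf{u}| = k$ is concentrated around $\log M_k$ on the scale of $\log M_k$ itself, so that the scale $\delta$ and the level $k_\delta$ determine each other up to $o(k_\delta)$. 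I would isolate this as the key lemma and then feed it into the otherwise routine measure-theoretic lower bound, so that Theorem \ref{thm_PuBdim} follows by combining $s^1 = s^*$, Theorem \ref{thm_IMdim}, the elementary relation $\pkd E \le \ubd E$, and the mass-distribution estimate for the packing lower bound.
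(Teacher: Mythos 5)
Your overall skeleton for the upper box dimension (pass through $\theta=1$ of Theorem \ref{thm_IMdim}, so that everything reduces to $s^1=s^*$) matches the paper's Lemma \ref{box d}, but the key lemma you propose to prove $s^1=s^*$ is false. Condition \eqref{condition} controls $\log\underline c_k$ against $\log M_k$; it does \emph{not} make the contraction ratios within a level homogeneous, so words in the cut set $\mathcal{M}(\delta,1)$ need not have length $k_\delta(1+o(1))$. Concretely, take the self-similar system with two maps of ratios $1/2$ and $1/4$ at every level: then $\underline c_k=1/4$ and $M_k=2^{-k}$, so \eqref{condition} holds, yet the cut set at $\delta=4^{-n}$ contains words of every length from $n=k_\delta$ up to $2n$. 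In this example one also checks that $\overline P(0,1)=s_0\log 4\neq\log 2=\overline P(0)$ (where $s_0$ is the similarity dimension), so the identity $\overline P(t,\theta=1)=\overline P(t)$ you assert is false in general; only the \emph{jump points} coincide. The paper gets around this by interposing the critical exponent $t^*=\inf\{t:\sum_{k}\sum_{\mathbf{u}\in\Sigma^k}\|D\varphi_\mathbf{u}\|^t<\infty\}$ and proving the cyclic chain $s^1\le t^*\le s^*\le s^1$, which only requires sign information about the two pressures (a negative cut-set pressure forces summability of the whole series over $\Sigma^*$ by decomposing $\Sigma^*=\bigcup_k\mathcal{M}(\rho^k)$ for $c<\rho<1$, and summability forces both pressures to be nonpositive). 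You would need to replace your concentration lemma by an argument of this type.

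For the packing lower bound your route is different from the paper's and is viable in principle, but the hard work is not done. What you need is the $\liminf$-density (equivalently, upper local dimension) criterion: a measure $\mu$ on $E$ with $\mu(B(x,r))\le Cr^t$ along a sequence of scales $r\to 0$ yields $\pkd E\ge t$; note that your displayed condition $\liminf_{r\to 0}\log\mu(B(x,r))/\log r\ge t$ is the \emph{lower} local dimension and would be both too strong and the wrong criterion (it pertains to Hausdorff dimension), though your parenthetical shows you intend the subsequential/$\limsup$ version. Constructing a single weak limit $\mu$ of the level measures that retains the local estimate at a common sequence of scales for $\mu$-a.e.\ $x$ is a genuine piece of work comparable to the Moran-set arguments behind \eqref{MSDF}. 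The paper avoids it entirely: it shows that for every open $O$ meeting $E$ there is a cylinder $J_{\mathbf{u}|_k}\subset O$, that the tail system rooted at any word of length $k$ has the same upper pressure jump point $s^*$ (by Lemma \ref{cor_subMul} the level-$k$ prefix only contributes a bounded multiplicative factor), hence $\ubd(E\cap O)=\ubd E$, and then invokes the criterion that a compact set all of whose relatively open subsets have full upper box dimension satisfies $\pkd E=\ubd E$. That argument is shorter and sidesteps the measure construction; if you keep your route, you must supply the measure-theoretic details, and if you adopt the paper's, you should check that $E$ is compact (it is, for NFIFS) so that the criterion applies.
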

In Section \ref{sec_PAB}, we change the condition \eqref{condition} in Theorem \ref{thm_PuBdim} to \eqref{def_cmc}, and obtain  some partial results. See Proposition \ref{lb packing}.

Generally, Hausdorff dimensions of non-autonomous finite conformal sets are not easy to calculate. We obtain Hausdorff dimensions under extra conditions.

\begin{thm}\label{thm_Hdim}
Let $E$ be the non-autonomous finite conformal set with $\mathcal{L}^d(\partial J)=0$ satisfying \eqref{condition} and OSC. Then the Hausdorff dimension is given by
$$
\hdd E=s_*,
$$
where $s_*$ is given by \eqref{def_s*}.
\end{thm}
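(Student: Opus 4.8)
The plan is to prove the two inequalities $\hdd E\le s_*$ and $\hdd E\ge s_*$ separately; the first is elementary and the second rests on the mass distribution principle. For the upper bound, fix $t>s_*$. By \eqref{def_s*} and the monotonicity of $\underline P$ (Lemma~\ref{ddx1}) we have $\underline P(t)<0$, so there are $\eta>0$ and integers $k_n\to\infty$ with $\sum_{\mathbf{u}\in\Sigma^{k_n}}\|D\varphi_{\mathbf{u}}\|^t\le e^{-\eta k_n}$. For each $n$ the family $\{J_{\mathbf{u}}:\mathbf{u}\in\Sigma^{k_n}\}$ covers $E$; by the bounded distortion property $|J_{\mathbf{u}}|\le C_0\|D\varphi_{\mathbf{u}}\|$ for a uniform $C_0$, and $\|D\varphi_{\mathbf{u}}\|\le c^{k_n}$ by \eqref{def_uccdn}, so the mesh of this cover tends to $0$. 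Hence $\mathcal H^t(E)\le C_0^t e^{-\eta k_n}\to 0$, so $\hdd E\le t$; letting $t\downarrow s_*$ gives $\hdd E\le s_*$. (This half uses neither the open set condition, nor $\mathcal L^d(\partial J)=0$, nor \eqref{condition}.)

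For the reverse inequality it suffices, given $\varepsilon>0$, to produce a Borel probability measure $\mu$ on $E$ with $\mu(B(x,r))\le C r^{\,s_*-\varepsilon}$ for all $x$ and all small $r$. Choose $t<s_*$ with $\underline P(t)>0$, so there are $\rho_0>0$, $k_0$ with $Z^{(k)}:=\sum_{\mathbf{u}\in\Sigma^k}\|D\varphi_{\mathbf{u}}\|^t\ge e^{\rho_0 k}$ for $k\ge k_0$. Bounded distortion (ii$'$) yields the \emph{uniform} quasi-multiplicativity $C^{-1}\|D\varphi_{\mathbf{u}}\|\,\|D\varphi_{\mathbf{v}}\|\le\|D\varphi_{\mathbf{u}\mathbf{v}}\|\le\|D\varphi_{\mathbf{u}}\|\,\|D\varphi_{\mathbf{v}}\|$ for words $\mathbf{u},\mathbf{v}$ over consecutive blocks, whence $Z^{(l+m)}\ge C^{-t}Z^{(l)}\sum_{\mathbf{w}\in\Sigma_{l+1}^{l+m}}\|D\varphi_{\mathbf{w}}\|^t$. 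Let $\nu_k$ be the probability measure assigning mass $\|D\varphi_{\mathbf{u}}\|^t/Z^{(k)}$ to $J_{\mathbf{u}}$ for each $\mathbf{u}\in\Sigma^k$, spread over $J_{\mathbf{u}}$ proportionally to $\mathcal L^d$ (legitimate since $\mathcal L^d(\partial J)=0$ forces $\mathcal L^d(\partial J_{\mathbf{u}})=0$), and let $\mu$ be a weak-$*$ subsequential limit of $(\nu_k)_k$; then $\mathrm{supp}\,\mu\subseteq E$. Using the displayed quasi-multiplicativity, together with the fact that under the open set condition a generation-$k$ cylinder that is not a descendant of $\mathbf{v}$ can meet $J_{\mathbf{v}}$ only along $\partial J_{\mathbf{v}}$, one checks that $\mu(J_{\mathbf{v}})\le C^t\|D\varphi_{\mathbf{v}}\|^t/Z^{(|\mathbf{v}|)}\le C^t\|D\varphi_{\mathbf{v}}\|^t e^{-\rho_0|\mathbf{v}|}$ for every cylinder with $|\mathbf{v}|\ge k_0$.

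It remains to pass from cylinders to balls. Given $x\in E$ and small $r>0$, use the cut set $\mathcal A_r=\{\mathbf{w}\in\Sigma^*:|J_{\mathbf{w}}|\le r<|J_{\mathbf{w}^*}|\}$, an antichain covering $E$. Every $\mathbf{w}\in\mathcal A_r$ with $J_{\mathbf{w}}\cap B(x,r)\ne\emptyset$ lies below one of the maximal cylinders $J_{\mathbf{v}}$ with $|J_{\mathbf{v}}|>r$ whose closure meets $B(x,r)$; these latter have pairwise disjoint interiors and, since $\mathcal L^d(\partial J)=0$, each contains a ball of radius comparable to its diameter, so there are at most $N_0=N_0(d,C,J)$ of them. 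Thus $\mu(B(x,r))\le N_0\max_{\mathbf{v}}\mu(J_{\mathbf{v}})\le N_0 C^t\max_{\mathbf{v}}\bigl(\|D\varphi_{\mathbf{v}}\|^t e^{-\rho_0|\mathbf{v}|}\bigr)$. Here \eqref{condition} enters decisively: it forces $\|D\varphi_{\mathbf{v}}\|\le r^{\,1-o(1)}$ and, via $\|D\varphi_{\mathbf{v}}\|\le c^{|\mathbf{v}|}$, also $|\mathbf{v}|\ge(1-o(1))\log(1/r)/\log(1/c)$ as $r\to0$, so $\mu(B(x,r))\le r^{\,t+\rho_0/\log(1/c)-o(1)}$. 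A short convexity computation (each map $t\mapsto\tfrac1k\log\sum_{\mathbf{u}\in\Sigma^k}\|D\varphi_{\mathbf{u}}\|^t$ has slope at most $\log c$, because $\|D\varphi_{\mathbf{u}}\|\le c^k$) gives $t+\underline P(t)/\log(1/c)\ge s_*$, so taking $t$ close enough to $s_*$ and $\rho_0$ close to $\underline P(t)$ produces the bound $\mu(B(x,r))\le C r^{\,s_*-\varepsilon}$. The mass distribution principle then yields $\hdd E\ge s_*-\varepsilon$, and letting $\varepsilon\downarrow0$ finishes the proof.

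The genuinely delicate step is the one just described: when the ratios $\|D\varphi_{k,j}\|$ vary widely within a block, cylinders of a fixed generation have very different sizes and a single ball $B(x,r)$ can meet many of them, so controlling $\mu(B(x,r))$ becomes the non-autonomous conformal analogue of the classical Moran-set estimates, for which \eqref{condition} is exactly the right hypothesis — it is what prevents $\hdd E$ from falling below $s_*$, cf. Example~\ref{ex_nec}. I expect the cleanest organisation is to phrase the whole lower bound through the cut sets $\mathcal A_r$ and to use \eqref{condition} to bound the ratios $|J_{\mathbf{w}}|/|J_{\mathbf{w}^*}|$ from below by $r^{o(1)}$ along the relevant stopping generations. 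One could also try to route the upper bound $\hdd E\le s_*$ through Theorem~\ref{thm_IMdim} (using $\hdd E\le\lid E=s_\theta$ and showing $s_\theta\to s_*$ as $\theta\to0$), but a direct argument like the one above still seems unavoidable for the lower bound.
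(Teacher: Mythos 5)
Your overall architecture matches the paper's (a covering argument for $\hdd E\le s_*$; uniform measures $\nu_k$ on generation-$k$ cylinders, a weak-$*$ limit, and the mass distribution principle for the lower bound), but the decisive step --- passing from cylinders to balls --- contains a genuine gap. You claim that the parent cylinders $J_{\bv}$ with $|J_{\bv}|>r$ whose closures meet $B(x,r)$ have pairwise disjoint interiors and number at most $N_0(d,C,J)$, ``since $\mathcal L^d(\partial J)=0$ each contains a ball of radius comparable to its diameter.'' Three things go wrong. First, these parents need not form an antichain: a word $\bv$ can have one child of diameter $\le r$ and another child of diameter $>r$ which itself has a child of diameter $\le r$, so two parents can be nested and their interiors then overlap. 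Second, $\mathcal L^d(\partial J)=0$ does not provide an interior ball \emph{near the point where the cylinder meets} $B(x,r)$; a cylinder of diameter much larger than $r$ may intersect $B(x,r)$ only through a thin part of $J$'s image, with its large interior ball sitting far from $x$, so the volume comparison does not localize and yields no uniform bound. A uniform unweighted count of disjoint-interior cylinders of diameter $>r$ meeting $B(x,r)$ is exactly what the cone condition buys in Lemma~\ref{cone} and Corollary~\ref{mphi} --- a hypothesis Theorem~\ref{thm_Hdim} does not assume; under the stated hypotheses the multiplicity can genuinely be unbounded, which is why the paper's Lemma~\ref{finite intersection} only asserts the \emph{weighted} bound $\sum_k\underline c_k^{\,d}\,\#D(B(x,r),k)\le C_2$. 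Third, your auxiliary estimate $|\bv|\ge(1-o(1))\log(1/r)/\log(1/c)$ is backwards: uniform contraction $\|D\varphi_{\bv}\|\le c^{|\bv|}$ combined with $\|D\varphi_{\bv}\|\gtrsim r$ gives an \emph{upper} bound on $|\bv|$, not a lower one, so the factor $e^{-\rho_0|\bv|}$ cannot be converted into a positive power of $r$ (fortunately that factor would be unnecessary if the counting step were correct).

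The mechanism your write-up is missing is the one the paper actually uses: estimate $\mu_n(B(x,r))$ directly over the cut set $A(B(x,r))$ itself. Each $J_{\bu}$ there satisfies $|J_{\bu}|\le 2r$ and meets $B(x,r)$, hence lies in $B(x,3r)$, so the OSC volume-packing argument localizes and gives $\sum_k\underline c_k^{\,d}\,\#D(B(x,r),k)\le C_2$; the potentially huge multiplicity $\#D(B(x,r),k)\approx\underline c_k^{-d}$ is then paid for by hypothesis \eqref{condition} in the quantitative form $M_k^{t-t'}\le\underline c_k^{\,d}$, via $\|D\varphi_{\bu}\|^t\le(C_1|J_{\bu}|)^{t'}M_k^{t-t'}$, at the cost of lowering the exponent from $t$ to $t'$. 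You gesture at this in your closing paragraph, but the argument you actually wrote down does not implement it, and as written the lower bound does not go through.
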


Next,  we weaken the condition \eqref{condition} in Theorem \ref{thm_Hdim} to \eqref{def_cmc}.
\begin{cor}\label{thm_Hdim g}
Let  $E$ be a  non-autonomous finite conformal set with $\mathcal{L}^d(\partial J)=0$  satisfying \eqref{def_cmc} and OSC. Then the Hausdorff dimension of $E$ is given by
$$
\hdd E=s_*,
$$
where $s_*$ is given by \eqref{def_s*}.
\end{cor}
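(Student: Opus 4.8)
The plan is to prove $\hdd E\le s_*$ and $\hdd E\ge s_*$ separately, following the scheme of Theorem~\ref{thm_Hdim} and checking that the weaker hypothesis \eqref{def_cmc} is enough wherever \eqref{condition} was used. The upper bound $\hdd E\le s_*$ needs no separation or scaling hypothesis and is exactly the argument already used for Theorem~\ref{thm_Hdim}: if $t>s_*$ then $\underline P(t)<0$, so there is a subsequence $k_n\to\infty$ with $S_{k_n}(t):=\sum_{\mathbf u\in\Sigma^{k_n}}\|D\varphi_{\mathbf u}\|^t\to 0$; since $E\subseteq\bigcup_{\mathbf u\in\Sigma^{k_n}}J_{\mathbf u}$, $|J_{\mathbf u}|\le|J|\,\|D\varphi_{\mathbf u}\|$ by conformality, and $\sup_{\mathbf u\in\Sigma^{k_n}}|J_{\mathbf u}|\le|J|M_{k_n}\le|J|c^{k_n}\to 0$, the covers $\{J_{\mathbf u}\}_{\mathbf u\in\Sigma^{k_n}}$ give $\mathcal H^t(E)=0$, hence $\hdd E\le t$ for all $t>s_*$.

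For the lower bound $\hdd E\ge s_*$ I would fix $t<s_*$, so $\underline P(t)>0$; then $S_k(t)\ge 1$ for all large $k$ and $k_r\to\infty$ as $r\to 0$. Construct a Borel probability measure $\mu$ on $E$ as a weak-$*$ limit, along a suitable subsequence of generations, of the measures distributing on each $J_{\mathbf u}$ ($\mathbf u\in\Sigma^k$) a multiple of Lebesgue measure of total mass $\|D\varphi_{\mathbf u}\|^t/S_k(t)$. The bounded distortion property~(ii$'$) makes $\mathbf u\mapsto\|D\varphi_{\mathbf u}\|$ submultiplicative with a uniform constant, which gives $\mu(J_{\mathbf u})\le C_1\|D\varphi_{\mathbf u}\|^t/S_{|\mathbf u|}(t)\le C_1\|D\varphi_{\mathbf u}\|^t\le C_2|J_{\mathbf u}|^t$ for all $\mathbf u$ with $|\mathbf u|$ large (here $\mathcal L^d(\partial J)=0$, hence $\mathcal L^d(\partial J_{\mathbf u})=0$, is used to pass to the weak-$*$ limit). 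To bound $\mu(B(x,r))$ I would use the cut set $\mathcal M_r=\{\mathbf u:|J_{\mathbf u}|\le r<|J_{\mathbf u^*}|\}$, whose members have pairwise disjoint interiors (OSC) and cover $E$, and which by conformality and bounded distortion each contain a ball of radius comparable to $|J_{\mathbf u}|$. This gives $\mu(B(x,r))\le C_2\sum'|J_{\mathbf u}|^t$, the sum over $\mathbf u\in\mathcal M_r$ with $J_{\mathbf u}\cap B(x,r)\neq\emptyset$, and I would split it according to whether $|J_{\mathbf u}|\ge r^{1+\sigma}$ or $|J_{\mathbf u}|<r^{1+\sigma}$, with $\sigma>0$ small.

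The hard part is controlling the ``over-contracted'' pieces $|J_{\mathbf u}|<r^{1+\sigma}$, and this is precisely where \eqref{def_cmc} must do the work of \eqref{condition}. Under \eqref{condition} every member of $\mathcal M_r$ has diameter $r^{1+o(1)}$, so that group is empty, the first group has at most $O(r^{-\sigma d})$ pieces (disjoint and fat in $B(x,2r)$) each of mass $\lesssim r^t$, and $\mu(B(x,r))\lesssim r^{t-\sigma d}$ follows at once. Under \eqref{def_cmc} the over-contracted group need not be empty, but \eqref{def_cmc} still bounds it: it forces $\log\overline c_k=o(\log M_k)$ and $\log\#I_k=o(\log M_k)$, and combining these with $M_k\asymp M_{k-1}\overline c_k$ shows that any $\mathbf u\in\mathcal M_r$ sits at a level $k$ with $|\log M_k|\le(1+o(1))|\log r|$, hence $\#I_k\le r^{-o(1)}$ (the $o(1)$ uniform once $k_r$ is large). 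Grouping the over-contracted members of $\mathcal M_r$ meeting $B(x,r)$ by their parents $J_{\mathbf u^*}$ (diameter $>r$): one parent at level $k-1$ has at most $\#I_k\le r^{-o(1)}$ children, so its over-contracted children have total $t$-content $\le\#I_k(r^{1+\sigma})^t\le r^{(1+\sigma)t-o(1)}$; and by disjointness and fatness only $O((\log(1/r))^2)$ parents of diameter $>r$ can meet $B(x,2r)$. Hence the over-contracted group contributes at most $r^{(1+\sigma)t-o(1)}\le r^t$ for $r$ small, so $\mu(B(x,r))\le C\,r^{t-\sigma d-\varepsilon(r)}$ with $\varepsilon(r)\to 0$; the mass distribution principle then gives $\hdd E\ge t-\sigma d$ for every $\sigma>0$, hence $\hdd E\ge t$, and letting $t\uparrow s_*$ yields $\hdd E\ge s_*$.

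I expect the bookkeeping around the over-contracted pieces to be the main obstacle: when \eqref{condition} fails, a single strongly contracting $\varphi_{k,i}$ can send a piece of diameter $>r$ to one far below $r$, and many such pieces---one per child of a parent of size $>r$---can meet the same $B(x,r)$; their individual mass is negligible, but taming the sum needs exactly the simultaneous caps on $\#I_k$ and $\overline c_k$ against $M_k$ encoded in \eqref{def_cmc}, propagated down to the cut set through $M_k\asymp M_{k-1}\overline c_k$. One tempting shortcut---passing to the sub-system retaining, at each level $k$, only the children of norm $\ge\overline c_k/2$ (which satisfies \eqref{condition} because \eqref{def_cmc} gives $\log\overline c_k=o(\log M_k)$, so Theorem~\ref{thm_Hdim} applies to it)---does not work, since the critical exponent of that sub-attractor can be strictly smaller than $s_*$; the direct argument above seems to be required.
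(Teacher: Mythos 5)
Your upper bound is fine (it is Lemma \ref{ub hausdorff}), but the lower-bound argument has a genuine gap at the step ``by disjointness and fatness only $O((\log(1/r))^2)$ parents of diameter $>r$ can meet $B(x,2r)$.'' Disjointness of the parents at a fixed level, together with the fact that each $J_{\mathbf{v}}$ contains a ball of radius comparable to $|J_{\mathbf{v}}|$, does not bound how many of them meet $B(x,r)$: the ball supplied by bounded distortion need not lie anywhere near $x$, and the corollary assumes only $\mathcal{L}^d(\partial J)=0$, not a cone condition (the paper invokes Lemma \ref{cone}, \emph{under the cone condition}, precisely when it needs such a count, in Corollary \ref{mphi}). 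The only volume one can localize inside $B(x,2r)$ is that of the cut-set child itself, which is of order $(\underline c_k r)^d$; this reproduces exactly the bound $\#D(B(x,r),k)\lesssim \underline c_k^{-d}$ of Lemma \ref{finite intersection}, and under \eqref{def_cmc} alone $\underline c_k$ is completely uncontrolled (e.g.\ $\underline c_k=e^{-e^k}$ is admissible), so the number of parents per level can be super-polynomial in $1/r$. Your ``over-contracted'' sum is therefore not tamed by grouping over parents, and the $r^{(1+\sigma)t-o(1)}$ per-parent estimate does not close the argument.

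Moreover, the shortcut you dismiss is essentially the paper's actual proof, only with a sharper threshold. The paper keeps at level $k$ exactly those $j$ with $\|D\varphi_{k,j}\|\ge \overline c_k\,\alpha_k^{-1}(\#I_k)^{-1/t_0}$, where $\alpha_k=k$ and $t_0<s_*$ is fixed (Lemma \ref{subsystem}); then the discarded indices contribute at most $\alpha_k^{-t}$ times the retained level sum for every $t\ge t_0$, so by Lemma \ref{ap by finite} the lower pressure, hence $s_*$, is preserved. Your objection --- that the sub-attractor's critical exponent can drop --- is valid for the crude threshold $\overline c_k/2$ but not for this $\#I_k$-dependent one. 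Finally, since $\log\overline c_k/\log M_k$ and $-\log\#I_k/\log M_k$ are both nonnegative, \eqref{def_cmc} forces each to tend to $0$ separately, so the subsystem satisfies $\underline c_k'\ge \overline c_k\,k^{-1}(\#I_k)^{-1/t_0}$ and hence \eqref{condition}; Theorem \ref{thm_Hdim} applied to it gives $\hdd E\ge\hdd E'=s_*'=s_*$, which is the argument of Proposition \ref{lb packing} transplanted to the Hausdorff case. In short: the route you rejected is the correct (and the paper's) one, and the direct route you propose fails without a cone-type hypothesis.
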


\begin{rem}
If $J$ is a convex set,  $\mathcal{L}^d(\partial J)=0$ holds, for example  $J=[0,1]^d$.
\end{rem}

\begin{rem} If $E$ satisfies the strong separation condition, condition $\mathcal{L}^d(\partial J)=0$ may be omitted.
\end{rem}

Non-autonomous infinite conformal sets are more awkward to study, and we extend our conclusions to  such sets with the following assumption instead of \eqref{condition},
\begin{equation}\label{cdn2}
\lim_{k\to +\infty} \frac{\log\overline c_k}{\log M_k}=0.
\end{equation}

For a NICIFS $\Phi$, let
$$
M_k=\sup_{\mathbf{u} \in \Sigma^k} \{\|D\varphi_\mathbf{u}\|\},  \quad \overline c_k=\sup_{j \in I_k} \{\|D\varphi_{k, j}\|\}.
$$  
\begin{thm}\label{thm_infinite}
Let $E$ be the non-autonomous infinite conformal set  satisfying \eqref{cdn2} and OSC.
Suppose for all $t\in(0,d)$ and all $\epsilon>0$, the following hold
\begin{itemize}
\item[(1).]The sums $\sum_{j\in I_k}\|D\varphi_{k, j}\|^t$ are either infinite for all $k$ or finite for all $k$.
\item[(2).] If $\sum_{j\in I_k}\|D\varphi_{k, j}\|^t<\infty$, then
$$
\lim_{n\to\infty}\frac{\sum_{k\le {M_n^{-\epsilon}}}\|D\varphi_{n, k}\|^t}{\sum_{j\in I_n}\|D\varphi_{n, j}\|^t}=1.
$$
\item[(3).]If $\sum_{j\in I_k}\|D\varphi_{k, j}\|^t=\infty$, then $\lim_{n\to\infty}\sum_{k\le {M_n^{-\epsilon}}}\|D\varphi_{n, k}\|^t=\infty. $
\end{itemize}
Let $s^*$ and  $s_*$ be given by \eqref{def_s*}. Then  $\ubd E\ge\pkd E\ge s^*$. Furthermore, if $\mathcal{L}^d(\partial J)=0$,
then  $\hdd E=s_*$.
\end{thm}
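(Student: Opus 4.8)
The plan is to combine a direct covering estimate giving $\hdd E\le s_*$ with lower bounds for all the dimensions extracted from non-autonomous \emph{finite} conformal subsystems of $\Phi$, to which Theorems~\ref{thm_PuBdim}, \ref{thm_Hdim} and their extensions in Section~\ref{sec_PAB} (Proposition~\ref{lb packing}, Corollary~\ref{thm_Hdim g}) already apply. For the upper bound, fix $t>s_*$. By \eqref{def_s*} we have $\underline P(t)<0$, so by \eqref{def_PFT2} there is a sequence $k_n\to\infty$ along which $\sum_{\mathbf{u}\in\Sigma^{k_n}}\|D\varphi_\mathbf{u}\|^t$ is finite and tends to $0$ (hypothesis~(1) then places $t$ in the ``finite'' regime at every level). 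By bounded distortion and uniform contraction, $|J_\mathbf{u}|\le C'\|D\varphi_\mathbf{u}\|\diam J$ for a fixed $C'$, while $\sup_{\mathbf{u}\in\Sigma^{k_n}}\|D\varphi_\mathbf{u}\|=M_{k_n}\le c^{k_n}\to0$; since $\{J_\mathbf{u}:\mathbf{u}\in\Sigma^{k_n}\}$ covers $E$ by \eqref{att}, for every $\delta>0$ we get $\mathcal H^t_\delta(E)\le(C'\diam J)^t\sum_{\mathbf{u}\in\Sigma^{k_n}}\|D\varphi_\mathbf{u}\|^t$ once $n$ is large, whence $\mathcal H^t(E)=0$ and $\hdd E\le t$. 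Letting $t\downarrow s_*$ gives $\hdd E\le s_*$; this step needs neither the OSC nor $\mathcal{L}^d(\partial J)=0$.

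For the lower bounds, fix $\epsilon>0$ and let $\Phi^{(\epsilon)}$ be the NFCIFS obtained from $\Phi$ by retaining at each level $k$ only the maps $\varphi_{k,j}$ with $j\le M_k^{-\epsilon}$; since $M_k\le c^k$, we have $M_k^{-\epsilon}\to\infty$, so this is a genuine finite subsystem with attractor $E^{(\epsilon)}\subseteq E$, inheriting the OSC and (when assumed) $\mathcal{L}^d(\partial J)=0$. I would argue in two steps. (a) \emph{The critical values are unchanged:} $s^{*,(\epsilon)}=s^*$ and $s_*^{(\epsilon)}=s_*$ for every $\epsilon>0$. One has $\sum_{\mathbf{u}\in\Sigma^k_{(\epsilon)}}\|D\varphi_\mathbf{u}\|^t\le\sum_{\mathbf{u}\in\Sigma^k}\|D\varphi_\mathbf{u}\|^t$ trivially; for the reverse comparison one peels letters level by level, so that bounded distortion reduces it to the level-wise ratios $\big(\sum_{i\le M_k^{-\epsilon}}\|D\varphi_{k,i}\|^t\big)\big/\big(\sum_{i\in I_k}\|D\varphi_{k,i}\|^t\big)$, which by hypotheses~(2)--(3) tend to $1$ in the summable regime (resp.\ whose truncated numerators diverge in the non-summable regime) as $k\to\infty$; a Ces\`aro-averaging argument then yields $\overline P^{(\epsilon)}\equiv\overline P$ and $\underline P^{(\epsilon)}\equiv\underline P$, and by the monotonicity of the pressures (Lemma~\ref{ddx1}) the claimed equalities. (b) \emph{Applying the finite theory:} using \eqref{cdn2}, together with $\log M_k^{(\epsilon)}=\log M_k+O(1)$ and $\|D\varphi_{k,j}\|\le c<1$, one checks that $\Phi^{(\epsilon)}$ satisfies \eqref{def_cmc} with the governing $\limsup$ equal to $\epsilon$; running the proofs of Corollary~\ref{thm_Hdim g} and Proposition~\ref{lb packing} with this parameter in place of $0$ gives $\hdd E^{(\epsilon)}\ge s_*^{(\epsilon)}-\eta(\epsilon)$ and $\ubd E^{(\epsilon)}\ge\pkd E^{(\epsilon)}\ge s^{*,(\epsilon)}-\eta(\epsilon)$ with $\eta(\epsilon)\to0$. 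Since $E^{(\epsilon)}\subseteq E$, combining (a), (b) and the monotonicity of $\hdd,\pkd,\ubd$, and letting $\epsilon\to0$, gives $\ubd E\ge\pkd E\ge s^*$ and $\hdd E\ge s_*$; with the covering bound this yields $\hdd E=s_*$ when $\mathcal{L}^d(\partial J)=0$.

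The principal obstacle is step~(b): the truncations dictated by hypotheses~(2)--(3) only \emph{approximately} satisfy \eqref{def_cmc} --- keeping $\lfloor M_k^{-\epsilon}\rfloor$ maps at level $k$ forces $\log\#I_k^{(\epsilon)}\asymp-\epsilon\log M_k$, so the relevant $\limsup$ is $\epsilon$ rather than $0$ --- so one cannot quote Corollary~\ref{thm_Hdim g} and Proposition~\ref{lb packing} verbatim and must instead reprove them carrying this parameter and verify that the resulting loss is $o_\epsilon(1)$, disappearing as $\epsilon\to0$. A secondary difficulty is step~(a): because the conformal distortion of a long composition $\varphi_\mathbf{u}$ is controlled uniformly only through a level-by-level recursion rather than a single product formula, the comparison of the infinite and truncated pressures must be carried out recursively, and this is precisely where all three hypotheses are used --- (1) keeps the summability dichotomy for $\sum_{i\in I_k}\|D\varphi_{k,i}\|^t$ level-independent, (2) controls the tail in the summable regime, and (3) forbids the critical exponent from dropping in the non-summable regime.
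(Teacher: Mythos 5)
Your overall strategy --- bound $\hdd E$ above by a direct covering argument, then extract finite conformal subsystems and invoke the finite-system results of Sections \ref{sec_PAB}--\ref{sec_HF} for the lower bounds --- is exactly the paper's strategy, and your upper bound and your step (a) are essentially the paper's Lemma \ref{ub hausdorff} and Lemma \ref{ap by finite}. The genuine gap is in your step (b), and you have correctly located it yourself but not closed it. Truncating at a \emph{fixed} exponent $\epsilon$ produces a subsystem for which \eqref{def_cmc} holds only with $\limsup$ equal to $\epsilon$, and you then appeal to hypothetical quantitative versions of Proposition \ref{lb packing} and Corollary \ref{thm_Hdim g} with an unspecified loss $\eta(\epsilon)\to 0$. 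This is not a routine modification: the proofs of those results use \eqref{condition}/\eqref{def_cmc} through inequalities of the form $M_k^{\eta}<\underline c_k^{d}$ (resp.\ $M_k^{t-t'}<\underline c_k^{d}$) valid for \emph{every} $\eta>0$ and all large $k$, which fail outright when the governing limit is a fixed positive $\epsilon$; it is neither evident nor shown that the resulting degradation of the dimension bounds is $o_\epsilon(1)$.

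The paper avoids this issue entirely by a diagonalization, which is the key idea your proposal is missing (Proposition \ref{lim=0}). Since hypotheses (2)--(3) are assumed for \emph{every} $\epsilon>0$, one lets the truncation exponent decay with the level: take $\epsilon=1/i$ on blocks $N_i<n\le N_{i+1}$ chosen so that the level-$n$ ratio in (2) exceeds $1/(1+\delta)$, and then diagonalize again over the accuracy $\delta=1/j$ (and, in the non-summable regime, over a sequence $t_m\downarrow h$). This yields a \emph{single} finite subsystem $\Phi'$ with $\overline P'=\overline P$, $\underline P'=\underline P$ and $\lim_n \log\#I_n'/\log M_n=0$ exactly. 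Combined with \eqref{cdn2}, $\overline c_n'=\overline c_n$ and $M_n'\le M_n$, the subsystem then satisfies \eqref{def_cmc} with limit exactly $0$, so Proposition \ref{lb packing} and Corollary \ref{thm_Hdim g} apply verbatim, giving $\pkd E\ge\pkd E'\ge s^*$ and $\hdd E\ge\hdd E'=s_*$ with no error term to control. You should replace your fixed-$\epsilon$ truncation by this block-diagonal construction (or prove the quantitative statements you are invoking); as written, the lower-bound half of the argument is incomplete.
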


 The rest of the paper is organized as follows. In Section \ref{sec_nagIFS},  we study the general properties of non-autonomous IFS and estimate the dimensions of non-autonomous  fractals. In Section \ref{sec_PF}, we study the properties of pressure functions, which are essential to study the dimensions of non-autonomous conformal sets. In Section \ref{sec_ID}, we give the proof of Theorem \ref{thm_IMdim}. Theorem \ref{thm_PuBdim} is proved in Section \ref{sec_PAB}.  In Section \ref{sec_HF}, we prove  Theorem \ref{thm_Hdim}, and relax the condition \eqref{condition} to \eqref{def_cmc}.   Finally, we extend our conclusions to non-autonomous infinite conformal fractals in Section \ref{sec_InfIFS}.

\section{Dimension estimate of non-autonomous sets}\label{sec_nagIFS}
In this section, we study the properties of non-autonomous iterated function systems  defined in Subsection \ref{subsecNIFS}, and these properties are  useful to explore  the  non-autonomous conformal fractals.
\subsection{Symbolic space and pressure functions}
We write
$$
\Sigma^{\infty}=\{\mathbf{u}=u_{1}u_{2}\ldots u_{k}\ldots : u_{k}\in I_k \, k=1,2,\ldots \}
$$
for the set of words with infinity length, and we  topologize $\Sigma^\infty$ using the metric
$d(\mathbf{u},\mathbf{v})=2^{-|\mathbf{u}\wedge  \mathbf{v}|}$ for distinct $\mathbf{u},\mathbf{v} \in \Sigma^\infty$ to make $\Sigma^\infty$ into a compact metric space. Given an integer $n>0$, for every $\bu\in \Sigma^\infty$, we write $\bu|_n=u_1\ldots u_n$.
For each $\mathbf{u}=u_1 \dots u_k \in \Sigma^k$, we write $\mathbf{u}^* =u_1 \dots u_{k-1}$.  Given $\mathbf{u}\in \Sigma^l$, for $\mathbf{v}\in \Sigma^k$ where $k\geq l$ or $\mathbf{v}\in\Sigma^\infty $,  we write $\mathbf{u}\prec \mathbf{v}$ if $u_i =v_i$ for all $i=1,2,\ldots, l$.
	
We define the \textit{cylinders} $\mathcal{C}_\mathbf{u}=\{\mathbf{v}\in \Sigma^\infty : \mathbf{u}\prec \mathbf{v}\}$ for $\mathbf{u}\in \Sigma^*$; the set of cylinders $\{\mathcal{C}_\mathbf{u} : \mathbf{u} \in \Sigma^* \}$ forms a base of open and closed neighborhoods for $\Sigma^\infty$. We term a subset $A$ of $\Sigma^*$ a
\textit{cut set} if $\Sigma^\infty\subset\bigcup_{\mathbf{u}\in A}\mathcal{C}_\mathbf{u}$, where $\mathcal{C}_\mathbf{u}\bigcap\mathcal{C}_{\mathbf{v}}=\emptyset$ for  all $\mathbf{u}\neq \mathbf{v}\in A$. It is equivalent to that, for every $\mathbf{w}\in \Sigma^\infty$, there is a unique word $\mathbf{u}\in A$ with $|\mathbf{u}|<\infty$ such that $\mathbf{u}\prec \mathbf{w}$.

Let $\pi_\Phi:\Sigma^\infty\to J$ by
$$
 \pi_\Phi(\mathbf{u})=\bigcap_{n=1}^\infty J_{\mathbf{u}|_n},
$$
where $J_{\mathbf{u}|_n}$ is given by \eqref{basic set}.

We cite K\"{o}nig's lemma in graph theory, and see \cite{WR} for details.
\begin{lem}\label{Konig's}

Let $T$ be a subset of $\Sigma^*$ such that  $\#T=\infty$ and  $\emptyset\in T$. Suppose that   for each $\bu\in T$,  the set $Pr_T(\bu)=\{\bv\in T:\bv\prec \bu\}$ and  the set
$$
succ_T(\bu)=\{\bv\in T:\bu\prec \bv \textit{ and } |\bv|=|\bu|+1\}
$$
are  finite. Then there exists  $\mathbf{w}\in\Sigma^\infty$ such that  $\mathbf{w}|_n\in T$ for each $n>0$.
\end{lem}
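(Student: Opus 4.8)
The plan is to reproduce the classical proof of K\"onig's lemma, building the infinite word $\mathbf{w}$ one coordinate at a time while always remaining inside the ``infinite part'' of $T$. For $\bu\in\Sigma^*$ write $T_\bu=\{\bv\in T:\bu\prec\bv\}$ for the collection of words of $T$ extending $\bu$, and call $\bu$ \emph{fertile} if $T_\bu$ is infinite. The first observation is that $\emptyset$ is fertile: every word extends $\emptyset$, so $T_\emptyset=T$, which is infinite by hypothesis.

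The key step is the implication: if $\bu\in T$ is fertile, then some $\bv\in succ_T(\bu)$ is fertile. To see this, note that every word of $T$ that properly extends $\bu$ has a length-$(|\bu|+1)$ prefix which, $T$ being closed under taking prefixes, again lies in $T$ and hence in $succ_T(\bu)$; consequently $T_\bu=\{\bu\}\cup\bigcup_{\bv\in succ_T(\bu)}T_\bv$. Since $succ_T(\bu)$ is finite by hypothesis, this is a finite union, so if $T_\bu$ is infinite then at least one $T_\bv$ with $\bv\in succ_T(\bu)$ must be infinite. Iterating, I would set $\bv_0=\emptyset$ and, given a fertile $\bv_n\in T$ with $|\bv_n|=n$, use the key step to choose a fertile $\bv_{n+1}\in succ_T(\bv_n)\subset T$; then $\bv_n\prec\bv_{n+1}$ and $|\bv_{n+1}|=n+1$. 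This yields a nested sequence $\emptyset=\bv_0\prec\bv_1\prec\bv_2\prec\cdots$ of elements of $T$ with $|\bv_n|=n$, and the unique $\mathbf{w}\in\Sigma^\infty$ whose length-$n$ prefix is $\bv_n$ for every $n$ then satisfies $\mathbf{w}|_n=\bv_n\in T$ for all $n>0$, as required.

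The one hypothesis doing genuine work is the finiteness of $succ_T(\bu)$, which is exactly what legitimizes the finite-union argument in the key step --- with infinite branching one could have $T_\bu$ infinite while every child subtree $T_\bv$ is finite; the finiteness of $Pr_T(\bu)$ is in fact automatic, since a finite word has only finitely many prefixes, and one also tacitly uses that $T$ contains all prefixes of its elements so that $succ_T$ accounts for all descendants. There is no substantive obstacle here --- this is the textbook compactness argument --- so the write-up reduces to organizing the induction cleanly and checking that the coherent prefixes $\bv_n$ determine a well-defined point $\mathbf{w}\in\Sigma^\infty$.
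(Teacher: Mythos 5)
Your argument is the textbook proof of K\"onig's lemma and is correct as far as it goes; the paper itself gives no proof at all (it simply cites Wilson's graph theory text \cite{WR}), so yours is exactly the argument the authors intend. The one thing to stress is that the prefix-closure of $T$, which you describe as ``tacitly used,'' is not a cosmetic omission: it is indispensable, and the lemma as literally stated is false without it. Your key identity $T_{\bu}=\{\bu\}\cup\bigcup_{\bv\in succ_T(\bu)}T_{\bv}$ fails for general $T$, and so does the conclusion: take $I_j=\{1,2\}$ for all $j$ and $T=\{\emptyset\}\cup\{1^{n}2:n\ge 1\}$ ($n$ ones followed by a two). Then $T$ is infinite, contains $\emptyset$, and for every $\bu\in T$ both $Pr_T(\bu)$ and $succ_T(\bu)$ are finite (indeed $succ_T(\bu)$ is empty for every $\bu\in T$), yet no $\mathbf{w}\in\Sigma^\infty$ has even $\mathbf{w}|_1\in T$. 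As you correctly observe, the hypothesis that $Pr_T(\bu)$ be finite is automatic for finite words and does nothing to repair this. So your proof establishes the lemma under the additional (necessary) hypothesis that $T$ contains all prefixes of its elements; this is harmless for the paper, since in its sole application (Lemma 3.2, with $T=\Sigma(x)=\{\bu\in\Sigma^*:x\in J_\bu\}$) prefix-closure holds because $J_{\bu i}\subset J_{\bu}$, but the hypothesis should really appear in the statement.
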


Given a non-autonomous iterated function system $\Phi$, we say that $\Phi$ satisfies the finite overlap condition (FOC) if for all $\bu\in\Sigma^*$ and all $x\in J_\bu$, we have
$$
\#\{i\in I_{|\bu|+1}:x\in J_{\bu i}\}<\infty.
$$

Next, under the finite overlap condition, we use  $\pi_\Phi$ to give another representation of  non-autonomous attractors.
\begin{lem}\label{Sigma=E}
If a non-autonomous  iterated function system $\Phi$ satisfies the finite overlap condition, then the attractor $E$ of $\Phi$ is the image of $\pi$, that is 
$$
E=\pi_\Phi(\Sigma^\infty)=\bigcap_{k=1}^\infty \bigcup_{\mathbf{u}\in\Sigma^k}J_\bu.
$$      
\end{lem}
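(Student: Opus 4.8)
We want to show that for a non-autonomous IFS $\Phi$ satisfying the finite overlap condition, the attractor $E = \bigcap_{k=1}^\infty \bigcup_{\mathbf u \in \Sigma^k} J_\bu$ coincides with $\pi_\Phi(\Sigma^\infty)$. The inclusion $\pi_\Phi(\Sigma^\infty) \subseteq E$ is the easy direction and requires no separation hypothesis: for any $\bu \in \Sigma^\infty$, the point $\pi_\Phi(\bu) = \bigcap_n J_{\bu|_n}$ lies in $J_{\bu|_k}$ for every $k$, and since $\bu|_k \in \Sigma^k$ this gives $\pi_\Phi(\bu) \in \bigcup_{\mathbf v \in \Sigma^k} J_{\mathbf v}$ for every $k$, hence $\pi_\Phi(\bu) \in E$. (One should note $\pi_\Phi$ is well-defined: by the uniform contraction \eqref{def_uccdn} the diameters $|J_{\bu|_n}| \le c^n |J| \to 0$, so the nested intersection of nonempty compact sets is a single point.)

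**The reverse inclusion.** Fix $x \in E$. For each $k$ let $T_k = \{\mathbf v \in \Sigma^k : x \in J_{\mathbf v}\}$; since $x \in \bigcup_{\mathbf v \in \Sigma^k} J_{\mathbf v}$ each $T_k$ is nonempty. Set $T = \{\emptyset\} \cup \bigcup_{k\ge 1} T_k \subseteq \Sigma^*$. I claim $T$ satisfies the hypotheses of König's lemma (Lemma \ref{Konig's}). First, $T$ is closed under taking predecessors: if $x \in J_{\mathbf v}$ and $\mathbf w \prec \mathbf v$ then $J_{\mathbf v} \subseteq J_{\mathbf w}$ by condition (ii) of the non-autonomous structure, so $x \in J_{\mathbf w}$, i.e. $\mathbf w \in T$; in particular $Pr_T(\bu)$ is finite (it has at most $|\bu|$ elements). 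Next, for $\bu \in T_{k}$, the set $succ_T(\bu) = \{\bu i : i \in I_{k+1},\ x \in J_{\bu i}\}$ is finite precisely by the finite overlap condition applied at the point $x \in J_\bu$. Finally $\#T = \infty$: each $T_k$ is nonempty, and distinct $k$ give disjoint (hence distinct) words, so $T$ is infinite. König's lemma now yields $\mathbf w \in \Sigma^\infty$ with $\mathbf w|_n \in T$ for all $n$, meaning $x \in J_{\mathbf w|_n}$ for all $n$, whence $x \in \bigcap_n J_{\mathbf w|_n} = \{\pi_\Phi(\mathbf w)\}$, i.e. $x = \pi_\Phi(\mathbf w) \in \pi_\Phi(\Sigma^\infty)$.

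**Remaining bookkeeping and the main obstacle.** The two inclusions give $E = \pi_\Phi(\Sigma^\infty)$; the second equality in the statement is just the definition \eqref{att} of $E$, so nothing further is needed there. The only genuinely substantive point — and the step I'd expect to require the most care — is verifying that $T$ meets all the hypotheses of König's lemma, specifically the finiteness of $succ_T(\bu)$: this is exactly where the finite overlap condition is used and is the reason the hypothesis cannot be dropped. One subtlety worth a remark: the lemma as cited also needs $T$ to be "connected downward through $\emptyset$", i.e. that $\emptyset \in T$ and the predecessor sets are finite, which we have; and one should make sure $succ_T$ is nonempty for words in $T_k$, which follows because $T_{k+1} \ne \emptyset$ and any $\mathbf v \in T_{k+1}$ has its length-$k$ prefix in $T_k$, so by a counting/pigeonhole argument some $\bu \in T_k$ has a successor in $T$. (Alternatively, apply König directly: its conclusion already forces an infinite branch, so non-emptiness of successors along that branch is automatic.) I would present the argument in the order: (1) $\pi_\Phi$ is well-defined; (2) $\pi_\Phi(\Sigma^\infty) \subseteq E$; (3) construct $T$ and verify König's hypotheses; (4) conclude $E \subseteq \pi_\Phi(\Sigma^\infty)$.
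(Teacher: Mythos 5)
Your proposal is correct and follows essentially the same route as the paper: the easy inclusion $\pi_\Phi(\Sigma^\infty)\subset E$ from nestedness, and the reverse inclusion by applying K\"onig's lemma to the tree $T=\{\bu\in\Sigma^*:x\in J_\bu\}$, with the finite overlap condition supplying finiteness of $succ_T(\bu)$. The only differences are cosmetic additions (well-definedness of $\pi_\Phi$ via the uniform contraction, and the remark on non-emptiness of successors), which the paper omits.
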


\begin{proof}
For each $\bu\in\Sigma^\infty$, we have $\pi_\Phi(\mathbf{u})=\bigcap_{n=1}^\infty J_{\mathbf{u}|_n}$. Since $J_{\mathbf{u}|_n}\subset\bigcup_{\mathbf{v}\in\Sigma^n}J_\bv, $
it follows that $\pi_\Phi(\mathbf{u})\in E$ and $\pi_\Phi(\Sigma^\infty)\subset E$.

For each $x\in E$, let $T=\Sigma(x)=\{\bu\in\Sigma^*:x\in J_\bu\}.$ Since  for each $n\in\mathbb{N}$, there exists  $\bu\in\Sigma^n$ such that $x\in J_\bu$, we have $\#T=\infty$. For each $\bu\in T$, it is clear that $Pr_T(\bu)=\{\bu|_k:1\le k\le |\bu|\}$ is finite. By  finite overlap condition, we have that  $\# succ_T(\bu)=\#\{i\in I_{|\bu|+1}:x\in J_{\bu i}\}<\infty,$ and by Lemma \ref{Konig's},  there exists  $\mathbf{w}\in\Sigma^\infty$ such that  $\mathbf{w}|_n\in T$ for each $n>0$. It implies that  $x\in J_{\mathbf{w}|_n}$ for every $n\in\mathbb{N}$ and $x=\pi_\Phi(\mathbf{w})$, and we have $E\subset \pi_\Phi(\Sigma^\infty)$.
\end{proof}

\begin{rem}
If $\Phi$ is   a non-autonomous finite iterated function system, the finite overlap condition automatically  holds, but the finite overlap condition is not necessarily satisfied  for a non-autonomous infinite iterated function system.
\end{rem}

Let $\Phi$ be the non-autonomous finite or infinite iterated function system satisfying OSC, and $E$  the attractor of $\Phi$ given by \eqref{att}. Given $k\ge l\ge0$, for each $\bu\in\Sigma_l^k$,   let
$$
\overline c_\bu=\sup_{x, y\in J}\frac{|\varphi_\bu(x)-\varphi_\bu(y)|}{|x-y|} \quad \textit{ and} \quad
\underline c_\bu=\inf_{x, y\in J}\frac{|\varphi_\bu(x)-\varphi_\bu(y)|}{|x-y|}.
$$
It is clear that
\begin{equation}\label{r guji}
\underline c_\bu|x-y|\le|\varphi_\bu(x)-\varphi_\bu(y)|\le\overline c_\bu|x-y|.
\end{equation}

We  define upper and lower pressures respectively by
\begin{equation} \label{def_pf}
\overline P(t)=\limsup_{k\to \infty} \frac{1}{k}\log \sum_{\mathbf{u}\in \Sigma^k}\overline c_\bu^t \quad \textit{ and} \quad
\underline P(t)=\lim\inf_{k\to \infty} \frac{1}{k}\log \sum_{\mathbf{u}\in \Sigma^k}\overline c_\bu^t,
\end{equation}
and both of them are strictly monotonous.
\begin{lem}\label{lem_ddx}
For  all $t_1 <t_2$,  if $\overline P(t_1)$ and $\overline P(t_2)$ are finite, then  $\overline P(t_1) > \overline P(t_2)$.
\end{lem}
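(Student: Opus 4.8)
The plan is to show that the function $t \mapsto \overline P(t)$ is strictly decreasing on the interval where it is finite, by exploiting the uniform contraction bound $\overline c_\bu \le c^{|\bu|}$ with $0<c<1$ coming from \eqref{def_uccdn}. The key observation is that for any word $\bu \in \Sigma^k$ we have $\overline c_\bu \le c^k$, so raising to a larger exponent produces a geometric gain: writing $t_2 = t_1 + \eta$ with $\eta > 0$, we have
$$
\overline c_\bu^{t_2} = \overline c_\bu^{t_1} \cdot \overline c_\bu^{\eta} \le \overline c_\bu^{t_1} \cdot c^{k\eta}.
$$
Summing over $\bu \in \Sigma^k$ gives
$$
\sum_{\bu \in \Sigma^k} \overline c_\bu^{t_2} \le c^{k\eta} \sum_{\bu \in \Sigma^k} \overline c_\bu^{t_1}.
$$

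From here I would take $\tfrac1k \log$ of both sides and pass to the $\limsup$ as $k \to \infty$. The right-hand side contributes $\eta \log c < 0$ plus $\overline P(t_1)$ (using that the $\limsup$ of a sum of a constant sequence plus a convergent-along-a-subsequence term behaves correctly: $\limsup_k (\tfrac1k\log(c^{k\eta} a_k)) = \eta\log c + \limsup_k \tfrac1k \log a_k$, since $\tfrac1k \log c^{k\eta} = \eta \log c$ is constant). This yields
$$
\overline P(t_2) \le \overline P(t_1) + \eta \log c < \overline P(t_1),
$$
provided $\overline P(t_1)$ is finite (so that the additive splitting of the $\limsup$ is legitimate and no $\infty - \infty$ issue arises); the hypothesis that $\overline P(t_2)$ is also finite is not strictly needed for the inequality but is part of the statement's setup and guarantees both quantities are real numbers so the strict inequality is meaningful. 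I should be a little careful that the bound $\overline c_\bu \le c^{k}$ indeed holds for all $\bu \in \Sigma^k$: this follows by composing the $k$ single-level contraction estimates \eqref{def_uccdn}, since $\varphi_\bu = \varphi_{u_1}\circ\cdots\circ\varphi_{u_k}$ and each factor contracts distances by at least a factor $c$, hence $\overline c_\bu = \sup_{x\ne y} |\varphi_\bu(x)-\varphi_\bu(y)|/|x-y| \le c^k$.

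The only mild subtlety — and the step I would be most careful about — is the manipulation of $\limsup$ under the additive split: one must know that $\overline P(t_1)$ is finite so that adding the constant $\eta \log c$ and then taking $\limsup$ commute without creating an indeterminate form, which is exactly why finiteness of $\overline P(t_1)$ is hypothesized. Everything else is routine. The same argument verbatim, replacing $\limsup$ by $\liminf$, gives the analogous strict monotonicity for $\underline P(t)$, which is presumably recorded alongside or used implicitly; and the same geometric-gain idea underlies the existence and well-definedness of the jump points $s^*$, $s_*$ in \eqref{def_s*}. I would therefore conclude: since $t_1 < t_2$ and both pressures are finite, $\overline P(t_1) \ge \overline P(t_2) + (t_2 - t_1)|\log c| > \overline P(t_2)$, completing the proof.
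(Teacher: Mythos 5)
Your proposal is correct and follows essentially the same route as the paper: both use the uniform contraction bound $\overline c_\bu\le c^{|\bu|}$ from \eqref{def_uccdn} to get $\sum_{\bu\in\Sigma^k}\overline c_\bu^{t_2}\le c^{k(t_2-t_1)}\sum_{\bu\in\Sigma^k}\overline c_\bu^{t_1}$ and then pass to the $\limsup$ to conclude $\overline P(t_2)\le\overline P(t_1)-(t_2-t_1)\log\frac{1}{c}<\overline P(t_1)$. Your extra care with the additive splitting of the $\limsup$ and with justifying $\overline c_\bu\le c^k$ by composition is sound and simply makes explicit what the paper leaves implicit.
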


\begin{proof}
Given $t_1<t_2$ such that $\overline P(t_1)$ and $\overline P(t_2)$ are finite. By \eqref{def_uccdn}, we have
$$
\sum_{\mathbf{u}\in \Sigma^n}\overline c_\bu^{t_2}  \le c^{n(t_2-t_1)}\sum_{\mathbf{u}\in \Sigma^n}\overline c_\bu^{t_1} ,
$$
where $c$ is given by \eqref{def_uccdn}, and it follows that $\overline P(t_2)\le\overline P(t_1)-(t_2-t_1)\log\frac{1}{c}.$
Hence  we have $\overline P(t_2) < \overline P(t_1)$ since $0<c<1$.
\end{proof}

\subsection{Dimensions estimation of non-autonomous fractals }
Let $ \overline P(t)$ and $\underline P(t)$ be the pressure functions given by \eqref{def_pf}.  By Lemma \ref{lem_ddx},  we write their jump points respectively as
\begin{eqnarray}\label{s^*}
\begin{split}
s^*&=&\inf\{t: \overline P(t)<0\}=\sup\{t: \overline P(t)>0\},  \\
\label{s_*}
s_*&=&\inf\{t: \underline P(t)<0\}=\sup\{t: \underline P(t)>0\}.
\end{split}
\end{eqnarray}
Since these are  consistent with the critical values defined in \eqref{def_s*},  we use the same notation for simplicity.
Let
\begin{eqnarray}\label{box ub}
t^*=\inf\{t:\sum_{k=1}^\infty\sum_{\mathbf{u}\in\Sigma^k}\overline c_\mathbf{u}^t<\infty\}.
\end{eqnarray}

Generally, $s_*$  serves as an upper bound for the Hausdorff dimension of non-autonomous attractors,  $t^*$ as an upper bound for the box dimension of non-autonomous attractors, and  $s^*$ as a lower bound for the box dimension of non-autonomous attractors. For some  special cases, we may  have $s^*=t^*$, and  see Lemma \ref{box d} for details.

\begin{thm}\label{ub hausdorff}
Given a non-autonomous finite or infinite iterated function system $\Phi$ satisfying OSC, let  $E$ be the corresponding  non-autonomous set given by \eqref{att} and $s_*$  given by \eqref{s^*}. Then
$$
\hdd E\le s_*.
$$
If $\# I_k $ is finite for all $k>0$,  and  there exists a constant  $N>0$ such that
\begin{equation}\label{cdn_uNk}
\limsup_{k\to\infty}\frac{\log \# I_k}{\log (-\log\max_{\bu\in\Sigma^{k-1}}\{\overline c_\bu\})}\le N,
\end{equation}
 then
$$
\ubd\le t^*,
$$
where $t^*$ is given by \eqref{box ub}.
\end{thm}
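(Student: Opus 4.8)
The plan is to prove the two inequalities separately, each time covering $E$ by a family of cylinder sets $J_\mathbf{u}$ whose diameters we control through \eqref{basic set}--\eqref{r guji}, namely $|J_\mathbf{u}|\le\overline c_\mathbf{u}\diam J\le c^{|\mathbf{u}|}\diam J$ by \eqref{def_uccdn}. For $\hdd E\le s_*$, fix any $t>s_*$. By the definition \eqref{s^*} of $s_*$ and the strict monotonicity of $\underline P$ we have $\underline P(t)<0$, so $\liminf_{k\to\infty}\frac1k\log\sum_{\mathbf{u}\in\Sigma^k}\overline c_\mathbf{u}^{\,t}<0$, and hence there exist $\beta<0$ and infinitely many $k$ for which $\sum_{\mathbf{u}\in\Sigma^k}\overline c_\mathbf{u}^{\,t}\le e^{\beta k}$ (in particular these sums are finite). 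For such $k$ the countable family $\{J_\mathbf{u}:\mathbf{u}\in\Sigma^k\}$ covers $E$ by \eqref{att}, its mesh is at most $c^k\diam J\to 0$, and $\sum_{\mathbf{u}\in\Sigma^k}|J_\mathbf{u}|^{t}\le(\diam J)^{t}e^{\beta k}\to 0$; therefore $\mathcal H^{t}(E)=0$, so $\hdd E\le t$, and letting $t\downarrow s_*$ gives the claim. No separation hypothesis is used here.

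For $\ubd E\le t^*$, fix $t>t^*$, so that $S:=\sum_{k\ge1}\sum_{\mathbf{u}\in\Sigma^k}\overline c_\mathbf{u}^{\,t}<\infty$ by \eqref{box ub}. For small $\delta>0$ set $\mathcal M_\delta=\{\mathbf{u}\in\Sigma^*:\overline c_\mathbf{u}\diam J\le\delta<\overline c_{\mathbf{u}^*}\diam J\}$. Since along any $\mathbf{w}\in\Sigma^\infty$ the quantity $\overline c_{\mathbf{w}|_n}$ is non-increasing in $n$, tends to $0$, and equals $1$ at $n=0$, $\mathcal M_\delta$ is a cut set; it is finite because $\#I_k<\infty$ and $\overline c_{\mathbf{u}^*}\le c^{|\mathbf{u}|-1}$ forces $|\mathbf{u}|\le 1+\log(\diam J/\delta)/\log(1/c)$ for every $\mathbf{u}\in\mathcal M_\delta$. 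By Lemma \ref{Sigma=E} (the finite overlap condition holds, the $I_k$ being finite) each $x\in E$ lies in some $J_\mathbf{u}$ with $\mathbf{u}\in\mathcal M_\delta$ and $|J_\mathbf{u}|\le\delta$, so $N_\delta(E)\le C_d\#\mathcal M_\delta$ for a dimensional constant $C_d$, where $N_\delta$ is the $\delta$-covering number. To estimate $\#\mathcal M_\delta$, group the words by length: a word of length $k$ in $\mathcal M_\delta$ is obtained from some $\mathbf{v}\in\Sigma^{k-1}$ with $\overline c_\mathbf{v}\diam J>\delta$ by appending one of at most $\#I_k$ letters, and a Markov bound gives $\#\{\mathbf{v}\in\Sigma^{k-1}:\overline c_\mathbf{v}\diam J>\delta\}\le(\diam J/\delta)^{t}\sum_{\mathbf{v}\in\Sigma^{k-1}}\overline c_\mathbf{v}^{\,t}$.

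The decisive point is that a length $k$ occurs in $\mathcal M_\delta$ only if $\max_{\mathbf{u}\in\Sigma^{k-1}}\overline c_\mathbf{u}>\delta/\diam J$, i.e. $-\log\max_{\mathbf{u}\in\Sigma^{k-1}}\overline c_\mathbf{u}<\log(\diam J/\delta)$; so, with $k_0$ supplied by \eqref{cdn_uNk}, for such $k\ge k_0$ one has $\#I_k\le\bigl(-\log\max_{\mathbf{u}\in\Sigma^{k-1}}\overline c_\mathbf{u}\bigr)^{N+1}<\bigl(\log(\diam J/\delta)\bigr)^{N+1}$, while $\#I_k\le C_1:=\max_{j<k_0}\#I_j$ for $k<k_0$. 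Summing over the finitely many lengths occurring in $\mathcal M_\delta$ and using $\sum_{k\ge1}\sum_{\mathbf{v}\in\Sigma^{k-1}}\overline c_\mathbf{v}^{\,t}\le 1+S$, we obtain
$$
\#\mathcal M_\delta\le\max\!\bigl(C_1,(\log(\diam J/\delta))^{N+1}\bigr)\,(\diam J/\delta)^{t}\,(1+S),
$$
so $N_\delta(E)\lesssim\delta^{-t}(\log(1/\delta))^{N+1}$ for small $\delta$. Taking logarithms, $\ubd E=\limsup_{\delta\to0}\frac{\log N_\delta(E)}{-\log\delta}\le t$, and letting $t\downarrow t^*$ gives $\ubd E\le t^*$.

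The covering arguments and the Markov inequality are routine. The delicate point is the cardinality bound for $\mathcal M_\delta$: one must restrict the count to those lengths $k$ genuinely realised at scale $\delta$, so that a possibly huge number of maps $\#I_k$ at deep but (for the current scale $\delta$) irrelevant levels causes no damage, and then feed the constraint $-\log\max_{\Sigma^{k-1}}\overline c_\mathbf{u}<\log(\diam J/\delta)$ into \eqref{cdn_uNk} to turn it into $\#I_k\le(\log(1/\delta))^{N+1}=\delta^{-o(1)}$, which is negligible on the logarithmic scale that defines the box dimension. This is precisely the role of hypothesis \eqref{cdn_uNk}.
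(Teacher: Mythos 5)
Your proposal is correct and follows essentially the same route as the paper: cover $E$ by the level-$n_k$ cylinders along a subsequence where $\sum_{\mathbf{u}\in\Sigma^{n_k}}\overline c_{\mathbf{u}}^{\,t}$ decays geometrically to get $\mathcal H^t(E)=0$ for $t>s_*$, and for the box bound use the natural $\delta$-cut set, count it as (number of parents with $\overline c_{\mathbf{v}}\gtrsim\delta$, controlled by Chebyshev against the convergent series defining $t^*$) times $\#I_k$, with \eqref{cdn_uNk} forcing $\#I_k\le(\log(1/\delta))^{N+1}=\delta^{-o(1)}$ at the realised levels. The only cosmetic difference is that the paper packages the $\#I_k$ factor as $N(\delta)=\max_k\#I_k$ and shows $\log N(\delta)/(-\log\delta)\to0$, whereas you bound each realised $\#I_k$ explicitly; the content is identical.
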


\begin{proof}
For every $\bu\in\Sigma^*$, by \eqref{r guji},  we have $|J_\bu|\le \overline c_\bu |J|.$ For each real $t>s_*$, we have $\underline P(t)<0$, and by \eqref{def_pf},  there exists $\{n_k\}$ such that for each $k$,
$$
|J|^{-t}\sum_{\mathbf{u}\in\Sigma^{n_k}}|J_\bu|^t\le\sum_{\mathbf{u}\in\Sigma^{n_k}}\overline c_\bu^t<e^{\frac{\underline P(t)}{2}n_k}.
$$
Let $\delta_k=\max\{|J_\mathbf{u}|:\mathbf{u}\in\Sigma^k\}$. Thus  the Pre-Hausdorff measure of $E$ is bounded by
$$
\mathcal{H}^t_{\delta_{n_k}}(E)\le |J|^{t}e^{\frac{\underline P(t)}{2}n_k},
$$
and the Hausdorff measure of $E$ is  zero.  Hence,   $\hdd E\le t$ for all $t>s_*$, and we obtain that $\hdd E\le s_*$.

Suppose that  $\# I_k $ is finite for all $k>0$. Next, we prove $\ubd E\le t^*$. For every $s>t^*$, by \eqref{box ub}, it is clear that
$$
\sum_{k=1}^\infty\sum_{\mathbf{u}\in\Sigma^k}\overline c_\mathbf{u}^s<\infty.
$$
For all sufficiently small  $\delta>0$, we write
$$
\mathcal{M}'(\delta)=\{\bu\in\Sigma^*:\overline c_\bu\le \delta<\overline c_{\bu^*}\}.
$$
Denote by $N_\delta(E)$  the smallest number of sets with diameters at most $\delta$  covering the set $E$, and it is clear that $N_\delta(E)\le\#\mathcal{M}'(\delta)$.

We write        
$$
\mathcal{M}''(\delta)=\{\bu\in\Sigma^*: \textit{ there exists $j$ such that } \bu j\in\mathcal{M}'(\delta)\},
$$
and note that $\mathcal{M}''(\delta)$ may not be a cut set. Let
\begin{equation}\label{def_ndelta}
N(\delta)=\max\{\#I_k:  \mathcal{M}'(\delta)\cap \Sigma^k \neq \emptyset \textit{ for all }k\in\mathbb{N}_+\}.
\end{equation}                         
Then we have $ \#\mathcal{M}'(\delta)\le N(\delta)\#\mathcal{M}''(\delta) ,$ and it follows that
$$
N_\delta(E)\delta^s\le N(\delta)\#\mathcal{M}''(\delta) \delta^s\le N(\delta) \sum_{\bu\in\mathcal{M}''(\delta)}\overline c_\bu^s\le N(\delta)\sum_{k=1}^\infty\sum_{\mathbf{u}\in\Sigma^k}\overline c_\mathbf{u}^s.
$$
Thus
$$
\frac{\log N_\delta(E)}{-\log \delta}\le s+\frac{\log\sum_{k=1}^\infty\sum_{\mathbf{u}\in\Sigma^k}\overline c_\mathbf{u}^s}{-\log \delta} +\frac{\log N(\delta)}{-\log \delta}.
$$
Since  $\delta<\max_{\bv\in\Sigma^{|\bu|-1}}\{\overline c_\bv\}$ for every $\bu\in\mathcal{M}'(\delta)$,  by \eqref{cdn_uNk}, we obtain that
$$
\limsup_{\delta\to 0}\frac{\log \# I_{|\bu|}}{-\log\delta}<\limsup_{\delta\to 0}\frac{\log \# I_{|\bu|} }{\log (-\log\max_{\bv\in\Sigma^{|\bu|-1}} \{\overline c_\bv\}) } \frac{\log (-\log\delta)}{(-\log\delta)}= 0.
$$
By \eqref{def_ndelta}, this implies that $\limsup_{\delta\to 0}\frac{\log N(\delta)}{-\log\delta} =0$.
Therefore,  $\ubd E\le s$ for all $s>t^*$, and the conclusion holds.
\end{proof}

\subsection{Non-autonomous infinite iterated function systems}\label{subsec IIFS}
Given a non-autonomous iterated function system $\Phi=\{\Phi_k\}_{k=1}^\infty$. 
In this subsection, we assume that there exists a constant $D>0$ such that  for all $k\ge l\ge0$ and all  $\bu\in\Sigma_l^k$,
\begin{equation}\label{cdn_cucl}
\frac{\overline c_\bu}{\underline c_\bu}\le D,
\end{equation}
where $ \overline c_\bu$ and $ \underline c_\bu$ are given by \eqref{r guji}. We say $\Phi'= \{\Phi_k'\}_{k=1}^\infty$ is a \emph{subsystem of $\Phi$} if $I'_k\subset I_k$ and $\Phi_k' \subset \Phi_k$ for all $k>0$, and we  write $\overline P'(t)$ and  $\underline P'(t)$ for the upper and lower pressure functions of the subsystem  $\Phi'$, and write $(\Sigma_l^k)'$ for the subspace of $\Sigma_l^k$. 

Generally, it is difficult to calculate  the dimensions of non-autonomous sets. Therefore, we adopt the method of approximating the infinite system with a finite subsystem to obtain an estimation of the dimension of the non-autonomous infinite iterated function system. The following  conclusion is inspired by \cite{RM}.

\begin{lem}\label{ap by finite}
Let $\Phi$ be a non-autonomous finite or infinite iterated function system satisfying \eqref{cdn_cucl}. Given a real $t\in[0,d]$, assume that  $ \sum_{k\in I_n}\overline c_{n, k}^t<\infty$ for all $n$.
Given  $\delta>0$ and, for each $n$, let $I'_n\subset I_n$ be a finite index set such that
\begin{equation}\label{app}
\sum_{k\in I_n}\overline c_{n, k}^t\le(1+\delta)\sum_{k\in I'_n}\overline c_{n, k}^t,
\end{equation}
for all sufficiently large $n$.
Let $\Phi'$ be the  subsystem  given by $I'_n$.  Then
$$
\overline P'(t)\ge \overline P(t)-\delta D^{2d},    \qquad \underline P'(t)\ge \underline P(t)-\delta D^{2d}.
$$
\end{lem}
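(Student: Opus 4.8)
The plan is to establish, for every level $k$ above a fixed threshold, an inequality of the form
$$\sum_{\bu\in\Sigma^k}\overline c_\bu^t\;\le\;C\,(1+\delta D^{2d})^{k}\sum_{\bu\in(\Sigma')^k}\overline c_\bu^t$$
with $C$ independent of $k$ (all sums are finite by the standing assumption $\sum_{i\in I_n}\overline c_{n,i}^t<\infty$); dividing by $k$, letting $k\to\infty$ along a $\limsup$ (respectively $\liminf$) sequence, and using $\log(1+x)\le x$ then gives $\overline P(t)\le\overline P'(t)+\delta D^{2d}$ and $\underline P(t)\le\underline P'(t)+\delta D^{2d}$, which is the assertion. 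Throughout write $b_n=\sum_{i\in I_n}\overline c_{n,i}^t$, $b'_n=\sum_{i\in I'_n}\overline c_{n,i}^t$, so that \eqref{app} reads $b_n\le(1+\delta)b'_n$ for all $n\ge N$ with $N$ fixed.

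The first ingredient is an almost-multiplicativity estimate: splitting a word at one site, $\bu=\bv\mathbf{w}$, one has $D^{-1}\,\overline c_\bv\,\overline c_{\mathbf{w}}\le\overline c_{\bv\mathbf{w}}\le\overline c_\bv\,\overline c_{\mathbf{w}}$. The right-hand inequality is just the submultiplicativity built into the definition of $\overline c$ as a supremum of distortion ratios; the left-hand one is exactly where \eqref{cdn_cucl} enters — $\varphi_\bv$ changes distances by a factor at least $\underline c_\bv\ge D^{-1}\overline c_\bv$ on all of $J$, hence on $\varphi_{\mathbf{w}}(J)$, and evaluating at the pair nearly realising $\overline c_{\mathbf{w}}$ gives the bound. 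Splitting at two sites costs a factor $D^2$. Writing $\widetilde a_{(p,q]}$ for the sum of $\overline c_{\mathbf{w}}^t$ over words $\mathbf{w}$ occupying exactly the levels $p+1,\dots,q$ with symbols from the full system, and $\widetilde a'_{(p,q]}$ for the subsystem analogue, this yields the submultiplicative upper bound $\widetilde a_{(p,q]}\le\widetilde a_{(p,j-1]}\,b_j\,\widetilde a_{(j,q]}$ (constant one) and the lower bound $\widetilde a'_{(p,q]}\ge D^{-2t}\,\widetilde a'_{(p,j-1]}\,b'_j\,\widetilde a'_{(j,q]}$ for $p<j\le q$.

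The core is an induction on the interval length $\ell=q-p$ showing that, for $p\ge N$,
$$\widetilde a_{(p,q]}\;\le\;(1+\delta D^{2d})^{\ell}\,\widetilde a'_{(p,q]}.$$
In the inductive step one partitions the words over the levels $(p,q]$ according to the first level $j$ at which a word uses a symbol outside $I'_j$; factoring out that symbol by the constant-one upper bound, the total contribution of ``first deviation at $j$'' words is at most $\widetilde a'_{(p,j-1]}\,(b_j-b'_j)\,\widetilde a_{(j,q]}$. Using $b_j-b'_j\le\delta b'_j$, the inductive hypothesis on the shorter interval $(j,q]$, and the $D^{-2t}$ lower bound, this is $\le\delta D^{2t}(1+\delta D^{2d})^{q-j}\,\widetilde a'_{(p,q]}$; summing this geometric series over $j$ gives $\le\frac{\delta D^{2t}}{\delta D^{2d}}\bigl((1+\delta D^{2d})^{\ell}-1\bigr)\widetilde a'_{(p,q]}\le\bigl((1+\delta D^{2d})^{\ell}-1\bigr)\widetilde a'_{(p,q]}$, using $t\le d$. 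Adding the ``no deviation'' term $\widetilde a'_{(p,q]}$ closes the induction. The finitely many levels $n<N$ on which \eqref{app} may fail are absorbed into $C$ by peeling them off: $\sum_{\bu\in\Sigma^k}\overline c_\bu^t$ and $\sum_{\bu\in(\Sigma')^k}\overline c_\bu^t$ each differ from $\widetilde a_{(N,k]}$ and $\widetilde a'_{(N,k]}$ only by a fixed level-$N$ sum and a single distortion factor, which produces the displayed inequality above with a $k$-independent $C$.

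I expect the genuine obstacle to be the distortion accounting. The naive level-by-level recursion $a_k\le a_{k-1}b_k$, $a'_k\ge D^{-t}a'_{k-1}b'_k$ loses a factor $D^t$ at every level and only delivers $\overline P'(t)\ge\overline P(t)-t\log D-\delta$, which does not tend to $\overline P(t)$ as $\delta\to0$ and is useless for the later approximation of infinite systems by finite subsystems. The device avoiding this is precisely the interval induction with the self-referential exponent $\delta D^{2d}$: it trades the accumulating distortion for one geometric series whose sum is governed by that same exponent, keeping the overall error $O(\delta)$ with a $\delta$-independent constant. Checking that the geometric series genuinely closes the induction, and handling the finitely many exceptional initial levels, is the only delicate point; everything else is routine bookkeeping.
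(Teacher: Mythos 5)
Your proposal is correct, and it reaches exactly the paper's key estimate $\sum_{\bu\in\Sigma^n}\overline c_\bu^{\,t}\le C(1+\delta D^{2d})^n\sum_{\bu\in{\Sigma'}^n}\overline c_\bu^{\,t}$, but by a different decomposition. The paper partitions $\Sigma^n$ according to the \emph{entire} set $P$ of coordinates lying in the subsystem, factors each word into the consecutive blocks of $P$ plus the $n-s$ deviating singletons at a one-shot distortion cost $D^{2(n-s)}$, bounds the deviating product by $\delta^{n-s}$ times the subsystem product via \eqref{app}, and then sums the binomial series $\sum_s\binom{n}{s}(\delta D^{2t})^{n-s}\le(1+\delta D^{2d})^n$. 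You instead induct on the length of the level-interval, classifying words by the \emph{first} deviating level $j$, paying a fixed $D^{2t}$ per step from the two-sided almost-multiplicativity supplied by \eqref{cdn_cucl}, and closing the induction with the geometric series $\delta D^{2t}\sum_{m<\ell}(1+\delta D^{2d})^m\le(1+\delta D^{2d})^\ell-1$ (this is where $t\le d$ enters for you, just as it enters the paper's binomial bound). The two organizations are combinatorially equivalent and rest on the same two pillars --- the $D^2$-cost of splitting a word and the relation $b_j-b_j'\le\delta b_j'$ --- so neither buys extra generality; your version is arguably tidier in one respect, namely that you explicitly peel off the finitely many initial levels where \eqref{app} is not assumed to hold, a point the paper's proof silently ignores when it applies \eqref{app} at every $j\notin P$.
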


\begin{proof}
Fix $n>0$, and  for each $s=0, 1, \ldots   ,n$, we write
$$
\mathcal{P}_s=\{P\subset \{1, \ldots, n\}:\#{P}=s\}.
$$
For every $P\in \mathcal{P}_s$, we divide $P$ into consecutive parts and write
\begin{eqnarray*}
&&I^*_P= \{m (m+1)\cdots (j-1)j(j+1)\cdots l: \textit{ for $m, l \in\{1,\ldots, n \}$} \\
&& \hspace{3cm}  \textit{ s.t. }  m-1\notin P, l+1\notin P \textit{ and } j\in P \textit{ } \forall  j=m,\ldots, l\}
\end{eqnarray*}
and
$$
\Sigma^n(P)=\{\bu=u_1\ldots u_n\in\Sigma^n:u_i\in I'_i \textit{ for }i\in P, u_j\notin I'_i \textit{ for }j\notin P\}.
$$
For each $\bu\in \Sigma^n(P)$, let
$$
\Sigma^n(P, \bu)=\{\bv=\bu|_m^l: m(m+1)\cdots l\in I_P^* \}.
$$     

Let  $k=\# I^*_P$. It is clear that $k-1\le n-s$.
Recall \eqref{r guji}, and by \eqref{cdn_cucl}, it follows that for each $\bu\in\Sigma^n$
$$
\overline c_\bu\le\prod_{\bv\in \Sigma^n(P, \bu)} \overline c_{\bv}\prod_{j\notin P}\overline c_{u_j}  \le D^{k+n-s-1}\overline c_\bu   \le D^{2n-2s}\overline c_\bu.
$$
We write ${\Sigma'}^n =\{u_{1}u_{2}\ldots u_{n}: u_{j}\in I_j', j=1,2,\ldots, n \} $.  By \eqref{app}, this implies that
\begin{eqnarray*}
\sum_{\bu\in \Sigma^n(P)}\overline c_\bu^{t}  &\leq& \sum_{\bu\in \Sigma^n(P)} \Big(\prod_{\bv\in \Sigma^n(P, \bu)} \overline c_{\bv}\prod_{j\notin P}\overline c_{u_j}  \Big)^{t}  \\
&=&  \Big(\sum_{\bu\in \Sigma^n(P)}\prod_{\bv\in \Sigma^n(P, \bu)}\overline c_{\bv}^{t}\Big) \Big(\prod_{j\notin P}\sum_{u_j\in I_j- I_j'}\overline c_{u_j}^t\Big) \\
&\leq& \delta^{n-s} \Big(\sum_{\bu\in \Sigma^n(P)}\prod_{\bv\in \Sigma^n(P, \bu)}\overline c_{\bv}^{t} \Big)\Big(\prod_{j\notin P}\sum_{u_j\in I_j'}\overline c_{u_j}^t\Big) \\
&\le&\delta^{n-s}D^{2t(n-s)}\sum_{\mathbf{u}\in ({\Sigma}^n)'}\overline c_\bu^{t}.
\end{eqnarray*}
Since the number of elements in $\mathcal{P}_s$ is $C_n^s$ and $t\leq d$,  it immediately follows that
\begin{eqnarray*}
\sum_{\mathbf{u}\in \Sigma^n}\overline c_\bu^{t}&=&\sum_{s=0}^n\sum_{P\in\mathcal{P}_s}\sum_{\bu\in \Sigma^n(P)}\overline c_\bu^{t} \\ &\le&\sum_{s=0}^n\sum_{P\in\mathcal{P}_s}\delta^{n-s}D^{2t(n-s)}\sum_{\mathbf{u}\in ({\Sigma}^n)' }\overline c_\bu^{t}      \\
&\le& (1+\delta D^{2d})^n \sum_{\mathbf{u}\in({\Sigma}^n)' }\overline c_\bu^{t}.
\end{eqnarray*}

By \eqref{def_PFT2}, we obtain that 
$$
\underline P'(t)\ge \underline P(t)-\log (1+\delta D^{2d})\ge \underline P(t)-\delta D^{2d}.
$$
Similarly, we have that $\overline P'(t)\ge \overline P(t)-\delta D^{2d}$, and the conclusion holds.
\end{proof}

Under  certain conditions,  a non-autonomous infinite iterated function system  has  a finite subsystem  with  identical pressure functions, and we apply this property to   control the number of contractions in  subsystem. The following  conclusion is inspired by   \cite{RM}.

\begin{prop}\label{lim=0}
Let $E$ be a non-autonomous infinite set  of $\Phi$ satisfying \eqref{cdn_cucl}. Given a real sequence $\{\beta_n\}_{n=1}^\infty$ convergent to  zero. Suppose that $\{\beta_n\}_{n=1}^\infty$ satisfies the following  for all $t\in(0,d)$ and all $\epsilon>0$:
\begin{itemize}
\item[(1).]The sum $\sum_{k\in I_n}\overline c_{n, k}^t$ are either infinite for all $n$ or finite for all $n$.
\item[(2).] If $\sum_{k\in I_n}\overline c_{n, k}^t<\infty$, then
$$
\lim_{n\to\infty}\frac{\sum_{k\le {\beta_n^{-\epsilon}}}\overline c_{n, k}^t}{\sum_{j\in I_n}\overline c_{n, j}^t}=1.
$$
\item[(3).]If $\sum_{k\in I_n}\overline c_{n, k}^t=\infty$, then $\lim_{n\to\infty}\sum_{k\le {\beta_n^{-\epsilon}}}\overline c_{n, k}^t=\infty.$
\end{itemize}
Then there exists a  subsystem $\Phi'$ of $\Phi$ such that $\overline P'(t)=\overline P(t)$ and $\underline P'(t)=\underline P(t)$ for all $t\ge 0$ and
$$
\lim_{k\to\infty}\frac{\log \#I_k'}{\log \beta_n}=0.
$$
Let   $E'$ be the non-autonomous set of $\Phi'$, and  let  $s_* '$ and ${s^*}'$ be given by \eqref{s^*} with respect to $\Phi'$. If $\hdd E'=s_*'$, then $\hdd E=s_* '=s_*.$ If $\pkd E'\ge {s^*}'$, then
$\ubd E\ge \pkd E\ge {s^*} '=s^*.$
\end{prop}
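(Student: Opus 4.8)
The plan is to build the subsystem $\Phi'$ by a direct construction at each level, then verify the pressure equalities via Lemma~\ref{ap by finite}, then extract the dimension consequences from the hypotheses already established in the excerpt. First I would handle the growth control on $\#I_k'$. Fix a countable dense sequence of exponents, say $t_m = d/2^m$ together with a rational enumeration of $(0,d)$, and for each such $t$ and each $\epsilon>0$ use hypotheses (2) and (3) to choose, for all large $n$, a finite truncation $I_n'(t,\epsilon)$ of $I_n$ consisting of the first $\lceil \beta_n^{-\epsilon}\rceil$ indices (in the ordering given by decreasing $\overline c_{n,k}$) so that $\sum_{k\in I_n'}\overline c_{n,k}^t \ge (1+\delta_n)^{-1}\sum_{k\in I_n}\overline c_{n,k}^t$ in the finite case (with $\delta_n\to 0$), or so that $\sum_{k\in I_n'}\overline c_{n,k}^t\to\infty$ in the infinite case. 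A standard diagonalisation over the countably many pairs $(t,\epsilon)$ produces a single choice $I_n'$ with $\#I_n' \le C\beta_n^{-\epsilon_n}$ for some $\epsilon_n\to 0$, whence
$$
\lim_{n\to\infty}\frac{\log\#I_n'}{\log\beta_n}=0.
$$

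Next I would establish $\overline P'(t)=\overline P(t)$ and $\underline P'(t)=\underline P(t)$ for every $t\ge 0$. The inequality $\overline P'(t)\le \overline P(t)$ (and likewise for the liminf) is immediate from monotonicity of the pressure in the system. For the reverse inequality, in the regime where $\sum_{k\in I_n}\overline c_{n,k}^t<\infty$, the truncation satisfies \eqref{app} with a ratio $1+\delta$ for arbitrarily small $\delta>0$ once we pass to the appropriate diagonal subsequence, so Lemma~\ref{ap by finite} gives $\overline P'(t)\ge \overline P(t)-\delta D^{2d}$ and $\underline P'(t)\ge\underline P(t)-\delta D^{2d}$; letting $\delta\to 0$ closes the gap. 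In the regime where $\sum_{k\in I_n}\overline c_{n,k}^t=\infty$ for all $n$, both $\overline P(t)$ and $\overline P'(t)$ are $+\infty$ for $t$ below the relevant critical exponent, so equality is trivial there; and since hypothesis (1) forces this dichotomy to be uniform in $n$, the two pressure functions have the same finiteness locus and agree on the finite part by the previous case. By Lemma~\ref{lem_ddx} (strict monotonicity) the jump points coincide, giving $s_*'=s_*$ and ${s^*}'={s^*}$.

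Finally, for the dimension statements: $E'\subset E$ always, so $\hdd E\ge \hdd E'=s_*'=s_*$ under the hypothesis $\hdd E'=s_*'$, while the reverse bound $\hdd E\le s_*$ is exactly the first assertion of Theorem~\ref{ub hausdorff} (applicable since $\Phi$ satisfies OSC and hence FOC-type conditions are not needed for the upper bound). Similarly $\ubd E\ge\pkd E\ge\pkd E'\ge {s^*}'=s^*$ using monotonicity of packing and box dimension under inclusion. I expect the main obstacle to be the bookkeeping in the diagonalisation step: one must choose the truncations $I_n'$ so that the single subsystem simultaneously satisfies \eqref{app} (for the countably many $t$ needed to pin down every jump point by continuity/monotonicity) and keeps $\#I_n'$ sub-$\beta_n^{-\epsilon}$ for every fixed $\epsilon$, which requires carefully letting $\epsilon=\epsilon_n\to 0$ slowly enough that the approximation \eqref{app} still holds along the diagonal — the interplay between "$\epsilon$ small enough for counting" and "$n$ large enough for approximation" is where the argument must be handled with care rather than by routine estimate.
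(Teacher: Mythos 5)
Your proposal follows essentially the same route as the paper: truncate each level to its first $\lceil\beta_n^{-\epsilon}\rceil$ indices (after ordering by decreasing $\overline c_{n,k}$), diagonalise so that a single subsystem works with $\epsilon_n\to 0$, invoke Lemma \ref{ap by finite} with $\delta\to 0$ for the pressure equalities, and finish with monotonicity of the dimensions under inclusion together with Theorem \ref{ub hausdorff}. Two points where your justification is looser than the paper's argument need attention. First, you pass from equality of the pressures on a countable dense set of exponents to all $t\ge 0$ by ``continuity/monotonicity''; for an infinite system the pressure need not be continuous (there is no uniform lower bound on the $\overline c_{n,k}$, so Lemma \ref{lem_ddx} gives only a one-sided Lipschitz bound). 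The paper instead observes that the tail ratio $\sum_{k\notin I'_n}\overline c_{n,k}^{t'}\big/\sum_{k\in I'_n}\overline c_{n,k}^{t'}$ is non-increasing in $t'$, so a truncation that works for one exponent automatically works for every larger one; this reduces the diagonalisation in $t$ to a single sequence decreasing to the critical exponent $h=\min\{d,\sup\{t:\sum_j\overline c_{k,j}^t=\infty\}\}$ and gives the equality for all $t$ simultaneously. (For the dimension conclusions alone, agreement on a dense set plus monotonicity of the pressures would suffice to identify the jump points, but the proposition claims equality for all $t$.) Second, your remark that the divergent case is ``trivial'' because both pressures are $+\infty$ is wrong as stated: the subsystem has finitely many maps at each level, so its level sums are finite, and $\underline P'(t)=+\infty$ holds only because hypothesis (3) lets one force $\sum_{k\in I'_n}\overline c_{n,k}^t\to\infty$; this is why the paper constructs a separate family $J^h_n$ for the divergent exponents and merges it with the convergent-case truncation. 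Your construction paragraph does build in this safeguard, so the defect is in the later justification rather than in the construction itself, but it should be made explicit.
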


\begin{proof}
 Without loss of generality, we  assume $ \overline{c}_{k, i}\ge\overline{c}_{k, j}$ for all $i\ge j$ and all  $k\in\mathbb{N}$.

Let $h=\min\{d, \sup\{t:\sum_{j\in I_k}\overline c_{k, j}^t=+\infty\}\}$. First,  we consider  $h<d$.    Arbitrarily choose   $t>h$, and by $(1)$, we have that  $\sum_{j\in I_k}\overline c_{k, j}^t$ is finite for all integers $k>0$. 

Fix $\delta\in (0,1)$.  For each integer $i>0$, setting $\epsilon = \frac{1}{i}$, by (2), there exists an integer  $N_i>0$ such that for every $n>N_i$, we have
$$
\frac{\sum_{k\le {\beta_n^{-\frac{1}{i}}}}\overline c_{n, k}^t}{\sum_{k\in I_n}\overline c_{n, k}^t}>\frac{1}{1+\delta}.
$$
We choose $\{N_i\}_{i=1}^\infty$ to be  an increasing sequence.   
Let $I^{t,\delta}_n=\{1, 2, \ldots, [\beta_n^{-\frac{1}{i}}]\}$ for $N_i< n\le N_{i+1}$. Let $\Psi_{t, \delta}$ be the subsystem generated by $I^{t,\delta}_n$, and it is clear that
\begin{equation}\label{subsequence}
\lim_{n\to +\infty} \frac{\log\#(I^{t,\delta}_n)}{\log \beta_n}=0,
\end{equation}
and $(1+\delta)\sum_{k\in I^{t,\delta}_n}\overline c_{n, k}^t>\sum_{k\in I_n}\overline c_{n, k}^t.$
Let $\delta=\frac{1}{j}$. By \eqref{subsequence}, there exists an increasing sequence $\{K_j\}_{j=1}^\infty$ such that for $n>K_j$ we have
$$
\frac{\log\#(I^{t,\frac{1}{j}}_n)}{\log \beta_n}<\frac{1}{j}.
$$
Let $I^t_n=I^{t, \frac{1}{j}}_n$ for $K_j<n\le K_{j+1}$. Then we have
$$
\lim_{n\to +\infty} \frac{\log\#(I^{t}_n)}{\log \beta_n}=0  \quad \textit{ and } \quad
\lim_{n\to\infty}\frac{\sum_{k\notin I^{t}_n}\overline c_{n, k}^t}{\sum_{k\in I^{t}_n}\overline c_{n, k}^t}=0.
$$
Let $\Psi_{t}$ be the subsystem generated by $I^{t}_n$.  For every $n\in\mathbb{N}$ and $d>t'>t>h$,  since  $\frac{\overline c_{n, k}}{\overline c_{n, \#I_n^t}}\ge 1$ for $k\in I^{t}_n$ and $\frac{\overline c_{n, k}}{\overline c_{n, \#I_n^t}}\le 1$ for $k\notin I^{t}_n$, it follows that
\begin{eqnarray*}
\frac{\sum_{k\notin I^{t}_n}\overline c_{n, k}^t}{\sum_{k\in I^{t}_n}\overline c_{n, k}^t}&=&    \frac{\sum_{k\notin I^{t}_n}(\frac{\overline c_{n, k}}{\overline c_{n, \#I_n^t}})^t}{\sum_{k\in I^{t}_n}(\frac{\overline c_{n, k}}{\overline c_{n, \#I_n^t}})^t}  \ge \frac{\sum_{k\notin I^{t}_n}(\frac{\overline c_{n, k}}{\overline c_{n, \#I_n^t}})^{t'}}{\sum_{k\in I^{t}_n}(\frac{\overline c_{n, k}}{\overline c_{n, \#I_n^t}})^{t'}} =\frac{\sum_{k\notin I^{t}_n}\overline c_{n, k}^{t'}}{\sum_{k\in I^{t}_n}\overline c_{n, k}^{t'}},
\end{eqnarray*}
which implies that for all  $t'>t$,
$$
\lim_{n\to\infty}\frac{\sum_{k\notin I^{t}_n}\overline c_{n, k}^{t'}}{\sum_{k\in I^{t}_n}\overline c_{n, k}^{t'}}=0.
$$     

By (1), we have that  either $\sum_{k\in I_n}\overline c_{n, k}^h<\infty$ for all $n$ or $\sum_{k\in I_n}\overline c_{n, k}^h=\infty$ for all $n$.          
If $\sum_{k\in I_n}\overline c_{n, k}^h<\infty$, by the same argument as above, we obtain a  subsystem $\Psi_h$ and take  $\Phi_h=\Psi_h$. If $\sum_{k\in I_n}\overline c_{n, k}^h=\infty$, fix a decreasing sequence $\{t_m\}_{m=1}^\infty$  convergent to $h$, and for each $m$, by the above argument, there exists an increasing sequence $\{L_m\}_{m=1}^\infty$ such that for each $n>L_m$,                   
$$
\frac{\log\#(I^{t_m}_n)}{\log \beta_n}<\frac{1}{m}  \qquad \textit{ and }\qquad
\frac{\sum_{k\notin I^{t_m}_n}\overline c_{n, k}^t}{\sum_{k\in I^{t_m}_n}\overline c_{n, k}^t}<\frac{1}{m}.
$$
 Let $I^h_n=I^{t_m}_n$ for $L_m<n\le L_{m+1}$,  and let $\Phi_{h}$ be the subsystem generated by $I^{h}_n$.

In both case, the subsystem $\Phi_h$ satisfies that for each $t$ with $\sum_{k\in I_n}\overline c_{n, k}^t<\infty$,
$$
\lim_{n\to +\infty} \frac{\log\#(I^{h}_n)}{\log \beta_n}=0  \qquad \textit{ and }\qquad
\lim_{n\to\infty}\frac{\sum_{k\notin I^{h}_n}\overline c_{n, k}^t}{\sum_{k\in I^{h}_n}\overline c_{n, k}^t}=0.
$$
By Lemma \ref{ap by finite}, we have $\underline P^h(t)=\underline P(t)$ for each $t$ with $\sum_{k\in I_n}\overline c_{n, k}^t<\infty$.

By a similar method, we may choose another sequence $J^h_n\subset I_n$,  and let $\Phi'_h$ be the corresponding subsystem, which  satisfies that for each $t$ with $\sum_{k\in I_n}\overline c_{n, k}^t=\infty$,
$$
\lim_{n\to +\infty} \frac{\log\#(J^{h}_n)}{\log \beta_n}=0 \qquad \textit{ and }\qquad
\lim_{n\to\infty}\sum_{k\in J^{h}_n}\overline c_{n, k}^t=\infty,
$$
which means $\underline P'^h(t)=\infty$ for $t$ with $\sum_{k\in I_n}\overline c_{n, k}^t=\infty$.
Let $I'_n=\max\{I^h_n, J^h_n\}$. Then the corresponding finite subsystem $\Phi'$ satisfies
$$
\lim_{n\to +\infty} \frac{\log\#(I'_n)}{\log \beta_n}=0,
$$
and for each $t\in (0, d)$,  we have
$\underline P'(t)=\underline P(t)$.

For $h=d$ or $h=0$,  the conclusion follows  by the same argument,.

Furthermore, if $\hdd E'=s_*'$, by $\underline P'(t)=\underline P(t)$, we have $s_*=s_*'$. Since $E'\subset E$, by Lemma \ref{ub hausdorff} it follows that
$$
\hdd E\le s_*=s_*'=\hdd E'\le \hdd E,
$$
that is  $\hdd E=\hdd E'=s_*.$ If $\pkd E'\ge {s^*}'$, by $\overline P'(t)=\overline P(t)$, we have $s^*={s^*} '$, which implies
$\ubd E\ge \pkd E\ge s^*.$
\end{proof}

\begin{cor}\label{limsup<1}
Let $E$ be the non-autonomous infinite set of $\Phi$ satisfying \eqref{cdn_cucl}.Given  a real sequence $\{\beta_n\}_{n=1}^\infty$ convergent to  zero. Suppose that   $\{\beta_n\}_{n=1}^\infty$ satisfies the following for all $t\in(0,d)$:
\begin{itemize}
\item[(1).]The sum $\sum_{k\in I_n}\overline c_{n, k}^t$ are either infinite for all $n$ or finite for all $n$.
\item[(2).] If $\sum_{k\in I_n}\overline c_{n, k}^t<\infty$, then
$$
\lim_{n\to\infty}\frac{\sum_{k\le {\beta_n^{-\frac{1}{2}}}}\overline c_{n, k}^t}{\sum_{j\in I_n}\overline c_{n, j}^t}=1.
$$
\item[(3).]If $\sum_{k\in I_n}\overline c_{n, k}^t=\infty$, then $\lim_{n\to\infty}\sum_{k\le {\beta_n^{-\frac{1}{2}}}}\overline c_{n, k}^t=\infty.$
\end{itemize}
Then there exists a   finite subsystem $\Phi'$ of $\Phi$ such that $\underline P'(t)=\underline P(t)$ for all $t\ge 0$ and
$$
\limsup_{k\to\infty}\frac{\log \#I_k'}{\log \beta_n}<1.
$$
Furthermore, let   $E'$ be the non-autonomous set of $\Phi'$, and  let  $s_* '$ be given by \eqref{s^*} with respect to $\Phi'$. If $\hdd E'=s_*'$, then $\hdd E=s_*'=s_*.$
\end{cor}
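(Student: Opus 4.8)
The plan is to follow the scheme of the proof of Proposition~\ref{lim=0}, but to exploit that the hypotheses here involve only the single truncation exponent $1/2$; this makes the nested and diagonal constructions needed there superfluous, and one explicit subsystem, independent of $t$, will do the job. After relabelling each index set as $\{1,2,3,\dots\}$ so that $\overline c_{k,i}$ is non-increasing in $i$ (as in the proof of Proposition~\ref{lim=0}), for every $n$ large enough that $[\beta_n^{-1/2}]\ge 2$ I would set $I'_n=\{k\in I_n : k\le\beta_n^{-1/2}\}$, and for the finitely many remaining $n$ take any $I'_n\subset I_n$ with $2\le\#I'_n<\infty$. Let $\Phi'$ be the resulting non-autonomous \emph{finite} iterated function system and $E'$ its attractor. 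Since $\#I'_n\le\beta_n^{-1/2}+1$ for all large $n$, the bound $\limsup_{n\to\infty}\frac{\log\#I'_n}{\log\beta_n}<1$ is immediate.

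The heart of the matter is to show $\underline P'(t)=\underline P(t)$ and $\overline P'(t)=\overline P(t)$ for every $t\ge 0$, separated according to the dichotomy in hypothesis~(1). Suppose first $\sum_{j\in I_n}\overline c_{n,j}^t<\infty$ for all $n$ (this covers $t=d$). Then $\sum_{k\in I'_n}\overline c_{n,k}^t=\sum_{k\le\beta_n^{-1/2}}\overline c_{n,k}^t$, so hypothesis~(2) gives $\sum_{k\in I'_n}\overline c_{n,k}^t\big/\sum_{j\in I_n}\overline c_{n,j}^t\to 1$; hence for every $\delta>0$ one has $\sum_{j\in I_n}\overline c_{n,j}^t\le(1+\delta)\sum_{k\in I'_n}\overline c_{n,k}^t$ for all large $n$, and Lemma~\ref{ap by finite} yields $\underline P'(t)\ge\underline P(t)-\delta D^{2d}$ and $\overline P'(t)\ge\overline P(t)-\delta D^{2d}$. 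Letting $\delta\downarrow 0$ and using the trivial reverse inequalities (a subsystem has no larger pressure) gives equality. Suppose instead $\sum_{j\in I_n}\overline c_{n,j}^t=\infty$ for all $n$ (this covers $t=0$). Then hypothesis~(3) gives $\sum_{k\in I'_n}\overline c_{n,k}^t\to\infty$; combining \eqref{cdn_cucl} with the sub-multiplicativity of $\overline c_\bu$ along compositions, $\sum_{\bu\in{\Sigma'}^k}\overline c_\bu^t\ge D^{-kt}\prod_{n=1}^k\big(\sum_{u\in I'_n}\overline c_{n,u}^t\big)$, so $\frac1k\log\sum_{\bu\in{\Sigma'}^k}\overline c_\bu^t\to+\infty$ by an averaging argument, and the same bound applied to $\Phi$ gives $\underline P(t)=\overline P(t)=+\infty$; thus $\underline P'(t)=\overline P'(t)=+\infty=\underline P(t)=\overline P(t)$.

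With $\underline P'(t)=\underline P(t)$ for all $t\ge 0$ established, the sets $\{t:\underline P(t)<0\}$ and $\{t:\underline P'(t)<0\}$ coincide, so $s_*'=s_*$. If in addition $\hdd E'=s_*'$, then $E'\subset E$ gives $\hdd E'\le\hdd E$, while Theorem~\ref{ub hausdorff} applied to $\Phi$ gives $\hdd E\le s_*$; hence $s_*'=\hdd E'\le\hdd E\le s_*=s_*'$, forcing $\hdd E=s_*'=s_*$.

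I expect the only genuinely delicate point to be the second case above, namely upgrading divergence of the truncated one-step sums to $\underline P'(t)=+\infty$; this is handled by the bounded-distortion inequality \eqref{cdn_cucl} exactly as in the proof of Proposition~\ref{lim=0}. The reason this argument is shorter is that a single fixed subsystem works for all $t$ at once, which is possible precisely because the truncation index $[\beta_n^{-1/2}]$ does not depend on $t$; the rest is routine bookkeeping over the finitely many exceptional indices.
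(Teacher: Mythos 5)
Your proof is correct, and it is worth noting that the paper supplies no separate argument for this corollary at all: it is stated immediately after Proposition \ref{lim=0} as if it followed by specialisation, even though the hypotheses here are weaker (conditions (2) and (3) are assumed only at the single exponent $\tfrac12$ rather than for all $\epsilon>0$) and the conclusion is correspondingly weaker ($\limsup<1$ rather than $\lim=0$). Your route is the right one: because the truncation index $[\beta_n^{-1/2}]$ is independent of $t$, the single subsystem $I'_n=\{k\in I_n:k\le\beta_n^{-1/2}\}$ works for every $t$ simultaneously, so all three layers of diagonalisation in the proof of Proposition \ref{lim=0} (over $\epsilon$, over $\delta$, and over $t_m\downarrow h$) collapse. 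Your handling of the two cases of the dichotomy matches the proposition's proof: Lemma \ref{ap by finite} with $\delta\downarrow 0$ in the finite case (legitimate here because \eqref{app} holds eventually for \emph{every} $\delta$ with the \emph{same} fixed subsystem), and the supermultiplicative lower bound $\sum_{\bu\in{\Sigma'}^k}\overline c_\bu^{\,t}\ge D^{-kt}\prod_{n\le k}\sum_{u\in I'_n}\overline c_{n,u}^{\,t}$ together with a Ces\`aro argument in the divergent case. The final step ($E'\subset E$, Theorem \ref{ub hausdorff}, and $s_*'=s_*$) is exactly as in the paper.

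One small point to tighten: hypotheses (2) and (3) are only assumed for $t\in(0,d)$, so your parenthetical claims that the two cases ``cover $t=d$'' and ``cover $t=0$'' need a word of justification. At $t=0$ both one-step sums are the cardinalities $\#I_n=\infty$ and $\#I'_n\to\infty$, so both pressures are $+\infty$ without invoking (3). For $t\ge d$ (in the finite-sum case) you should invoke the monotonicity computation that already appears in the proof of Proposition \ref{lim=0}: with the indices ordered so that $\overline c_{n,k}$ is non-increasing in $k$ and the truncation set fixed, the tail ratio $\sum_{k\notin I'_n}\overline c_{n,k}^{\,t'}\big/\sum_{k\in I'_n}\overline c_{n,k}^{\,t'}$ is non-increasing in $t'$, so its convergence to $0$ for some $t_0\in(0,d)$ propagates to all $t'\ge t_0$. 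This is a one-line fix, and the paper itself is no more careful about these endpoints.
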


For a non-autonomous finite iterated function system, the following lemma shows that we may  approximate it with subsystems where the contraction ratios  are bounded by the number of mappings at each level.  It is easier   to estimate the dimension of a non-autonomous infinite iterated function system in Section \ref{sec_InfIFS}. The following  conclusion is inspired by   \cite{RM}.
\begin{lem}\label{subsystem}
Given $t_0>0$ and  a non-autonomous finite iterated function system $\Phi$ satisfying \eqref{cdn_cucl}. Let $\{\alpha_n\}_{n=1}^\infty $ be a real sequence strictly increasing  to $\infty$. Then there exists a subsystem $\Phi'$ of $\Phi$ such that $\overline P'(t)=\overline P(t), \underline P'(t)=\underline P(t)$ for all $t\ge t_0$ and for all integers $n>0$
$$
\max_{k,l\in I'_n}\frac{\overline c_{n, k}}{\overline c_{n, l}}\le\alpha_n(\#I_n)^\frac{1}{t_0}.
$$
\end{lem}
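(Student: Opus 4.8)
The plan is to keep, at each level $n$, only the maps whose contraction ratio is comparable with the largest one, to discard the rest (which are then necessarily tiny), and to check via Lemma~\ref{ap by finite} that discarding them leaves the pressure unchanged. Assume first $\alpha_n\ge1$ for every $n$; the finitely many smaller $\alpha_n$ cause no trouble, since for those levels one may take $I'_n$ to be a single index (which satisfies the ratio bound once $\alpha_n(\#I_n)^{1/t_0}\ge1$), and altering $I'_n$ on a finite set of levels does not affect $\overline P'$ and $\underline P'$, which depend only on the tail. Fix $n$, write $N_n=\#I_n$, and relabel $I_n=\{1,\dots,N_n\}$ so that $\overline c_{n,1}\ge\overline c_{n,2}\ge\cdots\ge\overline c_{n,N_n}$. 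Put
$$
I'_n=\Bigl\{k\in I_n:\ \overline c_{n,k}\ge \overline c_{n,1}/\bigl(\alpha_n N_n^{1/t_0}\bigr)\Bigr\},
$$
which contains $1$, hence is nonempty. For $k,l\in I'_n$ one has at once $\overline c_{n,k}/\overline c_{n,l}\le \overline c_{n,1}/\bigl(\overline c_{n,1}/(\alpha_n N_n^{1/t_0})\bigr)=\alpha_n N_n^{1/t_0}$, so the required ratio bound holds. Let $\Phi'$ be the subsystem generated by $\{I'_n\}$.

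Next I would bound the mass discarded at level $n$: for any $t\ge t_0$ the set $I_n\setminus I'_n$ has at most $N_n$ elements, each with $\overline c_{n,k}<\overline c_{n,1}/(\alpha_n N_n^{1/t_0})$, whence
$$
\sum_{k\in I_n\setminus I'_n}\overline c_{n,k}^{t}\le N_n\Bigl(\frac{\overline c_{n,1}}{\alpha_n N_n^{1/t_0}}\Bigr)^{t}=\frac{N_n}{\alpha_n^{t}\,N_n^{t/t_0}}\,\overline c_{n,1}^{t}\le \alpha_n^{-t_0}\,\overline c_{n,1}^{t},
$$
where the last step uses $t\ge t_0$ (so $N_n^{t/t_0}\ge N_n$) and $\alpha_n\ge1$ (so $\alpha_n^{t}\ge\alpha_n^{t_0}$). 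Since $\overline c_{n,1}^{t}\le\sum_{k\in I_n}\overline c_{n,k}^{t}$, this gives
$$
\sum_{k\in I_n}\overline c_{n,k}^{t}\le \frac{1}{1-\alpha_n^{-t_0}}\sum_{k\in I'_n}\overline c_{n,k}^{t}
$$
for every $t\ge t_0$ and every $n$ with $\alpha_n>1$.

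Finally, fix $t\ge t_0$ and $\delta>0$. Since $\alpha_n\to\infty$ we have $(1-\alpha_n^{-t_0})^{-1}\le1+\delta$ for all sufficiently large $n$, so the last display is precisely hypothesis \eqref{app} of Lemma~\ref{ap by finite} (and all the sums are finite because $\Phi$ is a NFIFS). That lemma yields $\overline P'(t)\ge\overline P(t)-\delta D^{2d}$ and $\underline P'(t)\ge\underline P(t)-\delta D^{2d}$; letting $\delta\to0$ and combining with the trivial reverse inequalities $\overline P'(t)\le\overline P(t)$ and $\underline P'(t)\le\underline P(t)$ (valid because $\Phi'$ is a subsystem of $\Phi$) gives $\overline P'(t)=\overline P(t)$ and $\underline P'(t)=\underline P(t)$ for all $t\ge t_0$, completing the argument.

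There is no real obstacle here --- it is a one-step construction with no induction over scales. The only point needing care is calibrating the threshold $\overline c_{n,1}/(\alpha_n N_n^{1/t_0})$ so that simultaneously the within-level ratio equals exactly $\alpha_n N_n^{1/t_0}$ and the crude bound $\#(I_n\setminus I'_n)\cdot\bigl(\overline c_{n,1}/(\alpha_n N_n^{1/t_0})\bigr)^{t}$ for the discarded mass is dominated by the single largest term $\overline c_{n,1}^{t}$; this works precisely because the exponent $t/t_0\ge1$ lets the factor $N_n^{t/t_0}$ absorb the factor $N_n\ge\#(I_n\setminus I'_n)$.
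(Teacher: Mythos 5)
Your proposal is correct and follows essentially the same route as the paper: the same threshold $\overline c_{n,1}/(\alpha_n(\#I_n)^{1/t_0})$ defining $I'_n$, the same crude bound on the discarded mass using $t/t_0\ge 1$ to absorb the factor $\#I_n$, and the same appeal to Lemma~\ref{ap by finite} followed by $\delta\to0$ and the trivial reverse inequalities. The only cosmetic difference is that you compare the discarded sum to $\sum_{k\in I_n}\overline c_{n,k}^t$ while the paper compares it directly to $\sum_{k\in I'_n}\overline c_{n,k}^t$, which is the same estimate.
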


\begin{proof}
Let $I'_n$ be a set  consisting  of all indices $j\in I_n$ such that
$$
\overline c_{n, j}\ge \max_j\{\overline c_{n, j}\}\alpha_n^{-1}\#I_n^{-\frac{1}{t_0}}.
$$
Then for $t\ge t_0$,
$$
\sum_{u_j\in I_j- I_j'}\overline c_{u_j}^t\le (\max_j\{\overline c_{n, j}\}\alpha_n^{-1}\#I_n^{-\frac{1}{t_0}})^t\#I_n\le\alpha_n^{-t}\sum_{u_j\in I_j'}\overline c_{u_j}^t.
$$
Given $\delta >0$, since $\lim_{n\to\infty} \alpha_n =\infty$,  for sufficiently large $n$, we have that
$$
\sum_{u_j\in  I_j'}\overline c_{u_j}^t\le (1+\delta) \sum_{u_j\in I_j}\overline c_{u_j}^t.
$$

Since  $\#I_n<\infty$ for each $n\in\mathbb{N}_+$,  it follows that $\sum_{k\in I_n}\overline c_{n, k}^t<\infty$ for all $n$.
By  Lemma \ref{ap by finite}, we have $\overline P'(t)=\overline P(t)$ and $\underline P'(t)=\underline P(t)$ for all $t\ge t_0$.
\end{proof}

\section{Pressure functions of non-autonomous  conformal iterated function systems}\label{sec_PF}

In this section, we define a pressure function of non-autonomous conformal iterated function systems, which is a powerful tool to study fractal dimensions.

\begin{lem}\label{u contraction}
Given a $NCIFS$, for each integer $k$ and $u\in I_k$, 
$$
\|D\varphi_u\|\le c,
$$
where $c$ is given by \eqref{def_uccdn}.
\end{lem}

\begin{proof}
Since $\overline{\mbox{int}(J)}=J$, for every $x\in J$ there exists $\{x_n\}_{n=1}^\infty\subset J$ such that $\lim_{n\to\infty}x_n=x$. Note that each $\varphi_u$ is a $C^1$ conformal diffeomorphism of $V$ into $V$. Thus, we have
$$
\|D\varphi_u(x)\| = \lim_{y \to x} \frac{|\varphi_u(y) - \varphi_u(x)|}{|y - x|}=\lim_{n\to\infty} \frac{|\varphi_u(x_n) - \varphi_u(x)|}{|x_n - x|}.
$$
By (i) in definition, it follows that
$$
|\varphi_{u}(x)-\varphi_{u}(y)|\le  c|x-y| \qquad \textit{ for all }  x, y\in J,
$$
and $\|D\varphi_u\|\le c$.
\end{proof}

\begin{lem}\label{ddx1}
Given a non-autonomous finite conformal iterated function system $\Phi$.  For every $\theta\in(0,1]$, both upper and lower pressure functions are strictly monotonous in $t$, that is for $t_1 <t_2$ if $\overline P(t_1, \theta)$ and $\overline P(t_2, \theta)$ are finite, we have  $\overline P(t_1, \theta) > \overline P(t_2, \theta)$, where $\overline P(t, \theta)$ is given in \eqref{def_PTF}. 
\end{lem}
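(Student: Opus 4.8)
The plan is to follow the template of the proof of Lemma~\ref{lem_ddx}: the uniform contraction \eqref{def_uccdn} produces, for each word $\mathbf{u}$, a factor that decays exponentially in $|\mathbf{u}|$, and every word appearing in a cut set from $\mathcal{M}(\delta,\theta)$ has length at least $k_\delta$, so this decay is uniform enough to survive dividing by $k_\delta$ and passing to the limit.

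First I would record that $\|D\varphi_{\mathbf{u}}\|\le c^{|\mathbf{u}|}$ for every $\mathbf{u}\in\Sigma^*$, with $c$ as in \eqref{def_uccdn}. Since each generator $\varphi_{k,i}$ is $c$-Lipschitz on $J$, its scaling factor satisfies $\|D\varphi_{k,i}(x)\|\le c$ at every point of $J$; because $D\varphi_{\mathbf{u}}(x)$ is the composition of the similarity linear maps $D\varphi_{u_j}$ evaluated at the appropriate intermediate points, its scaling factor is the product of the individual ones, so taking the supremum over $x\in J$ gives the claim. Next I would check that if $\mathcal{M}\in\mathcal{M}(\delta,\theta)$, $\mathbf{u}\in\mathcal{M}$, and $\delta$ is small enough that $|J|>\delta$, then $|\mathbf{u}|\ge k_\delta$: writing $\mathbf{u}=u_1\cdots u_k$, the diameters $|J_{u_1\cdots u_j}|$ are non-increasing in $j$ and $|J_{\mathbf{u}}|\le\delta$, so there is a least index $j_0\le k$ with $|J_{u_1\cdots u_{j_0}}|\le\delta$; then $|J_{(u_1\cdots u_{j_0})^*}|>\delta$, so $u_1\cdots u_{j_0}$ competes in the definition of $k_\delta$ and hence $k_\delta\le j_0\le|\mathbf{u}|$.

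Combining the two observations, for $t_1<t_2$ and any $\mathcal{M}\in\mathcal{M}(\delta,\theta)$,
\[
\sum_{\mathbf{u}\in\mathcal{M}}\|D\varphi_{\mathbf{u}}\|^{t_2}
=\sum_{\mathbf{u}\in\mathcal{M}}\|D\varphi_{\mathbf{u}}\|^{t_1}\,\|D\varphi_{\mathbf{u}}\|^{t_2-t_1}
\le c^{\,k_\delta(t_2-t_1)}\sum_{\mathbf{u}\in\mathcal{M}}\|D\varphi_{\mathbf{u}}\|^{t_1},
\]
since $\|D\varphi_{\mathbf{u}}\|^{t_2-t_1}\le c^{|\mathbf{u}|(t_2-t_1)}\le c^{k_\delta(t_2-t_1)}$. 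The factor $c^{k_\delta(t_2-t_1)}$ is independent of $\mathcal{M}$, so minimizing over $\mathcal{M}\in\mathcal{M}(\delta,\theta)$ and applying $\frac{1}{k_\delta}\log(\cdot)$ gives
\[
\frac{1}{k_\delta}\log\min_{\mathcal{M}\in\mathcal{M}(\delta,\theta)}\sum_{\mathbf{u}\in\mathcal{M}}\|D\varphi_{\mathbf{u}}\|^{t_2}
\le \frac{1}{k_\delta}\log\min_{\mathcal{M}\in\mathcal{M}(\delta,\theta)}\sum_{\mathbf{u}\in\mathcal{M}}\|D\varphi_{\mathbf{u}}\|^{t_1}-(t_2-t_1)\log\frac{1}{c}.
\]
Taking $\limsup_{\delta\to0}$ (respectively $\liminf_{\delta\to0}$), and noting that the additive constant $-(t_2-t_1)\log\frac{1}{c}$ passes through unchanged, yields $\overline P(t_2,\theta)\le\overline P(t_1,\theta)-(t_2-t_1)\log\frac{1}{c}$ and $\underline P(t_2,\theta)\le\underline P(t_1,\theta)-(t_2-t_1)\log\frac{1}{c}$. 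As $0<c<1$ and $t_1<t_2$, the subtracted quantity is strictly positive, so whenever the relevant values are finite we obtain $\overline P(t_1,\theta)>\overline P(t_2,\theta)$, and likewise $\underline P(t_1,\theta)>\underline P(t_2,\theta)$. I expect the only point requiring genuine care to be the length bound $|\mathbf{u}|\ge k_\delta$ for words in cut sets of $\mathcal{M}(\delta,\theta)$, together with the routine verification that the $\mathcal{M}$-independent factor commutes with the minimum and with the $\limsup/\liminf$; the remaining steps are as in Lemma~\ref{lem_ddx}.
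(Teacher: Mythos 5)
Your argument is correct and is essentially the paper's own proof: both rest on the single inequality $\min_{\mathcal{M}}\sum_{\mathbf{u}\in\mathcal{M}}\|D\varphi_{\mathbf{u}}\|^{t_2}\le c^{k_\delta(t_2-t_1)}\min_{\mathcal{M}}\sum_{\mathbf{u}\in\mathcal{M}}\|D\varphi_{\mathbf{u}}\|^{t_1}$ followed by taking $\frac{1}{k_\delta}\log(\cdot)$ and the limit. You simply supply the details the paper leaves implicit, namely $\|D\varphi_{\mathbf{u}}\|\le c^{|\mathbf{u}|}$ and the length bound $|\mathbf{u}|\ge k_\delta$ for words in a cut set from $\mathcal{M}(\delta,\theta)$.
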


\begin{proof}
Given $\theta\in(0,1]$ and $t_1<t_2$ such that $\overline P(t_1, \theta)$ and $\overline P(t_2, \theta)$ are finite. By \eqref{def_uccdn}, we have
\begin{eqnarray*}
\min_{\mathcal{M}\in\mathcal{M}(\delta, \theta)}\{\sum_{\mathbf{u}\in\mathcal{M}}\|D\varphi_\mathbf{u}\|^{t_2}\}
&\le&c^{k_{\delta}(t_2-t_1)}\min_{\mathcal{M}\in\mathcal{M}(\delta, \theta)}\{\sum_{\mathbf{u}\in\mathcal{M}}\|D\varphi_\mathbf{u}\|^{t_1}\},
\end{eqnarray*}
where $c$ is given by \eqref{def_uccdn}, and it follows that
$$
\overline P(t_2, \theta)\le\overline P(t_1, \theta)-(t_2-t_1)\log\frac{1}{c}.
$$
Then we have $\overline P(t_2, \theta) < \overline P(t_1, \theta)$, since $0<c<1$.
\end{proof}

\begin{cor}\label{ddx}
Let $\Phi$ be a non-autonomous finite or infinite conformal iterated function system. Both upper and lower pressure functions are strictly monotonous, that is for $t_1 <t_2$ if $\overline P(t_1)$ and $\overline P(t_2)$ are finite, we have  $\overline P(t_1) > \overline P(t_2)$, where $\overline P(t)$ is given in \eqref{def_PFT2}.
\end{cor}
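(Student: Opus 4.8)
The plan is to reproduce the argument of Lemma~\ref{lem_ddx} (and of Lemma~\ref{ddx1}) almost verbatim, the only new point being that the uniform contraction bound \eqref{def_uccdn} must be transferred to the conformal scaling factors $\|D\varphi_\bu\|$ appearing in the pressure \eqref{def_PFT2}. So the first step is to record that $\|D\varphi_{k,i}\|\le c$ for every $k>0$ and every $i\in I_k$, where $c<1$ is the constant of \eqref{def_uccdn}: at any interior point $x$ of $J$ the difference-quotient estimate in \eqref{def_uccdn} forces $\|D\varphi_{k,i}(x)\|\le c$, and since by (i') the map $\varphi_{k,i}$ is $C^1$ and $J$ has non-empty interior, continuity of the derivative extends the bound to all of $J$, hence to $\|D\varphi_{k,i}\|=\sup_{x\in J}\|D\varphi_{k,i}(x)\|$.

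The second step uses the chain rule together with conformality. For $\bu=u_1\cdots u_k\in\Sigma^k$, $D\varphi_\bu(x)$ is, by the chain rule, a composition of the linear maps $D\varphi_{u_j}$ evaluated at suitable points; since each of these is a similarity, the scaling factor of the composition is the product of the scaling factors, so taking the supremum over $x\in J$ yields
\[
\|D\varphi_\bu\|\ \le\ \prod_{j=1}^{k}\|D\varphi_{u_j}\|\ \le\ c^{k}
\]
for every $\bu\in\Sigma^k$. This is exactly the analogue, in the derivative notation, of the estimate $\overline c_\bu\le c^{|\bu|}$ used in Lemma~\ref{lem_ddx}.

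With this in hand, fix $t_1<t_2$ with $\overline P(t_1)$ and $\overline P(t_2)$ finite. Finiteness of $\overline P(t_1)$ guarantees $\sum_{\bu\in\Sigma^k}\|D\varphi_\bu\|^{t_1}<\infty$ for all sufficiently large $k$, and writing $\|D\varphi_\bu\|^{t_2}=\|D\varphi_\bu\|^{t_1}\,\|D\varphi_\bu\|^{t_2-t_1}\le c^{\,k(t_2-t_1)}\|D\varphi_\bu\|^{t_1}$ gives
\[
\sum_{\bu\in\Sigma^k}\|D\varphi_\bu\|^{t_2}\ \le\ c^{\,k(t_2-t_1)}\sum_{\bu\in\Sigma^k}\|D\varphi_\bu\|^{t_1}.
\]
Dividing by $k$, taking logarithms, and passing to the $\limsup$ (respectively $\liminf$) as in \eqref{def_PFT2}, I get $\overline P(t_2)\le\overline P(t_1)-(t_2-t_1)\log\frac1c$ and $\underline P(t_2)\le\underline P(t_1)-(t_2-t_1)\log\frac1c$; since $0<c<1$ the subtracted term is strictly positive, so $\overline P(t_2)<\overline P(t_1)$ and $\underline P(t_2)<\underline P(t_1)$.

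I do not expect any genuine obstacle: the whole content is the contraction bound $\|D\varphi_\bu\|\le c^{|\bu|}$, which is immediate once one notes that conformal linear maps compose multiplicatively in operator norm. The only slightly delicate point is the passage from the metric Lipschitz bound \eqref{def_uccdn} to a pointwise bound on $D\varphi_{k,i}$ over the whole of $J$, boundary included, which is handled by continuity of the derivative. It is also worth noting that this single argument covers both the finite (NFCIFS) and the infinite (NICIFS) cases with no modification, since the assumed finiteness of $\overline P(t_1)$ and $\overline P(t_2)$ already forces the relevant series to be finite for all large $k$.
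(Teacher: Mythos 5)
Your proposal is correct and follows exactly the route the paper intends: the corollary is stated without proof precisely because it is the argument of Lemma~\ref{lem_ddx} (and Lemma~\ref{ddx1}) with $\overline c_\bu$ replaced by $\|D\varphi_\bu\|$, resting on the bound $\|D\varphi_\bu\|\le c^{|\bu|}$ from uniform contraction. Your extra care in deriving $\|D\varphi_{k,i}\|\le c$ from \eqref{def_uccdn} and in noting the multiplicativity of scaling factors under the chain rule only makes explicit what the paper leaves implicit.
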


The following conclusions are the consequences of  bounded distortion (see (II') in Subsection \ref{subsec_NCIFS}),  which are frequently used in our proofs.

\begin{lem}\label{cor_subMul}
Let $\Phi$ be a non-autonomous finite or infinite conformal iterated function system. For all integers $0< m< n<\infty$, every  $\bu\in\Sigma^m$ and $\bu\in\Sigma_{m+1}^n$,  we have that
\begin{equation}
C^{-1}\|D\varphi_{\bu}\|\|D\varphi_{\bv}\|\le \|D\varphi_{\bu\bv}\|\le C \|D\varphi_{\bu}\|\|D\varphi_{\bv}\|.
\end{equation}
\end{lem}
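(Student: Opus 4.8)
The plan is to reduce the statement to the chain rule together with the bounded distortion hypothesis (ii'). Writing $\bu=u_1\ldots u_m$ and $\bv=u_{m+1}\ldots u_n$, I would first record that $\varphi_{\bu\bv}=\varphi_{\bu}\circ\varphi_{\bv}$ on $V$ and that, since each $\varphi_{k,i}$ is conformal (so each derivative is a similarity linear map), the scaling factors are multiplicative under composition: the chain rule gives $\|D\varphi_{\bu\bv}(x)\|=\|D\varphi_{\bu}(\varphi_{\bv}(x))\|\,\|D\varphi_{\bv}(x)\|$ for every $x\in V$. This pointwise identity is the only place conformality is used; everything after it is a supremum manipulation on the compact set $J$.

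For the upper bound I would take the supremum over $x\in J$ in this identity: since $\varphi_{\bv}(J)\subset J$, we have $\|D\varphi_{\bu}(\varphi_{\bv}(x))\|\le\|D\varphi_{\bu}\|$ and $\|D\varphi_{\bv}(x)\|\le\|D\varphi_{\bv}\|$, hence $\|D\varphi_{\bu\bv}\|\le\|D\varphi_{\bu}\|\,\|D\varphi_{\bv}\|\le C\|D\varphi_{\bu}\|\,\|D\varphi_{\bv}\|$ (indeed with constant $1$, since $C\ge 1$).

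For the lower bound I would first apply bounded distortion to $\varphi_{\bu}$ — legitimate because $\bu\in\Sigma^m=\Sigma_1^m$ — and take a supremum to obtain $\|D\varphi_{\bu}(z)\|\ge C^{-1}\|D\varphi_{\bu}\|$ for all $z\in V$. Then I would pick $x_0\in J$ with $\|D\varphi_{\bv}(x_0)\|=\|D\varphi_{\bv}\|$ (such a point exists since $x\mapsto\|D\varphi_{\bv}(x)\|$ is continuous on the compact set $J$; an approximating sequence works equally well), evaluate the multiplicativity identity at $x_0$, and bound $\|D\varphi_{\bu}(\varphi_{\bv}(x_0))\|$ below by $C^{-1}\|D\varphi_{\bu}\|$ using $\varphi_{\bv}(x_0)\in J$. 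This yields $\|D\varphi_{\bu\bv}\|\ge C^{-1}\|D\varphi_{\bu}\|\,\|D\varphi_{\bv}\|$, completing the proof.

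I do not expect a genuine obstacle. The only points needing a little care are checking that the distortion bound in (ii') genuinely applies to each of $\bu$, $\bv$ and the concatenation $\bu\bv$ (it does, since $\Sigma^m=\Sigma_1^m$, $\bv\in\Sigma_{m+1}^n$, and $\bu\bv\in\Sigma^n=\Sigma_1^n$), and making sure the relevant suprema are attained — or otherwise handled by passing to limits — so that the pointwise multiplicativity identity can be exploited at a near-extremal point.
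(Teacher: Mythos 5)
Your proof is correct and follows essentially the same route as the paper's: the chain rule gives the pointwise multiplicativity of scaling factors, the supremum over $J$ yields the upper bound, and bounded distortion applied to $\varphi_{\bu}$ at a (near-)extremal point of $\|D\varphi_{\bv}(\cdot)\|$ yields the lower bound. Your extra care about whether the supremum is attained is a minor refinement the paper glosses over, but nothing more.
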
	

\begin{proof}
Since $D\varphi_{\mathbf{u}\bv}(x)=D\varphi_{\mathbf{u}}(\varphi_{\bv}(x))D\varphi_{\bv}(x)$, it follows that
$$
\|D\varphi_{\bu\bv}\|\le \|D\varphi_{\bu}\|\|D\varphi_{\bv}\|.
$$

By Bounded distortion, for every $x\in J$, we have that 
\begin{eqnarray*}
\|D\varphi_\mathbf{uv}\|&\ge& \|D\varphi_{\mathbf{u}}(\varphi_{\bv}(x))D\varphi_{\bv}(x)\|  \\
&=&\|D\varphi_{\mathbf{u}}(\varphi_{\bv}(x))\|\|D\varphi_{\bv}(x)\|  \\
&\ge&C^{-1}\|D\varphi_{\mathbf{u}}\|\|D\varphi_{\bv}(x)\|,
\end{eqnarray*}
and the conclusion holds.
\end{proof}

\begin{lem}[Quasi-differential Mean Value Theorem]\label{quasi_diff_mvt}
Let $\Omega \subseteq \mathbb{R}^d$ be an open convex set, and let $f: \Omega \to \mathbb{R}^d$ be a differentiable mapping. Then for all distinct points $x, y \in \Omega$, there exist a point $\xi$ on the line segment connecting $x$ and $y$ such that
$$
|f(x) - f(y)|\le \|Df(\xi)\||x - y|.
$$
\end{lem}
The principle of bounded distortion makes precise the idea of a set being 
'approximately nonautonomous similar', in that any sufficiently small neighbourhood may 
be mapped onto a large part of the set by a transformation that is not unduly 
distorting. 

\begin{lem}\label{contraction}
Given NCIFS $\boldsymbol{\Phi}$, for all $\mathbf{u}\in\Sigma^*$,  we have that for all $x, y\in J$,
\begin{equation}\label{ineq_BDxyL}
\|D\varphi_\mathbf{u}\||x-y|\asymp|\varphi_\mathbf{u}(x)-\varphi_\mathbf{u}(y)|, 
\end{equation}
and moreover,
\begin{equation}
\|D\varphi_\mathbf{u}\|\asymp|J_\mathbf{u}|. 
\end{equation}
\end{lem}	

\begin{proof}
Recall that $J$ is compact and $V$ is an open connected set containing $J$. The collection $\mathcal{F}=\{B(x, r_x)\subset V:x\in J\}$   of  balls is a cover of  $J$. Thus, there exists  a finite subcover $\{B(x_i, r_i)\}_{i=1}^k\subset\mathcal{F}$ of $J$. Let $\delta$ be the Lebesgue number of $\{B(x_i, r_i)\}_{i=1}^k$;  see \cite{PW}. Since $V$ is a connected set of $  \mathbb{R}^d$, it is also path-connected. For every $1\le i\le k-1$, there exists a path connecting $x_i$ and $x_{i+1}$, and we choose a finite number of balls $\{B(z_j, r_j)\}_{j=1}^{K_i}$ with  $z_1=x_i$ and $z_{K_i}=x_{i+1}$ satisfying  $B(z_j, r_j)\cap B(z_{j+1}, r_{j+1})\not=\emptyset$. We denote the new collection of these balls by $\{B(z_j, r_j)\}_{j=1}^l$.  Note that for each $j $, $B(z_j, r_j)\cap B(z_{j+1}, r_{j+1})\not=\emptyset$.  

Fix $\bu\in\Sigma^*$. Arbitrarily choose  $x, y\in J$. There exist integers $m$ and  $n$ such that $x\in B(z_m, r_m), y\in B(z_n, r_n)$.

We first  show  
\begin{equation}\label{ineq_BDxyR}
|\varphi_\mathbf{u}(x)-\varphi_\mathbf{u}(y)|\le C' \|D\varphi_\mathbf{u}\||x-y|.
\end{equation}
If $0<|x-y|<\delta$,  then by Lemma \ref{quasi_diff_mvt}, it holds.  
Otherwise, for  $|x-y|\geq \delta$,  by Lemma \ref{quasi_diff_mvt}, we have
\begin{eqnarray*}
|\varphi_\mathbf{u}(x)-\varphi_\mathbf{u}(y)|&\le&|\varphi_\mathbf{u}(x)-\varphi_\mathbf{u}(z_m)|+\cdots+|\varphi_\mathbf{u}(z_n)-\varphi_\mathbf{u}(y)|   \\
&\le&\|D\varphi_\mathbf{u}\||x-z_m|+\cdots+\|D\varphi_\mathbf{u}\|z_n-y|  \\
&\le& 2\delta(\|D\varphi_\mathbf{u}\|+\cdots+2\|D\varphi_\mathbf{u}\|) \\
&\le&C'\|D\varphi_\mathbf{u}\||x-y|.
\end{eqnarray*}
Hence  \eqref{ineq_BDxyR} holds.

Next, we show  \begin{equation}\label{ineq_BDxyL}
|\varphi_\mathbf{u}(x)-\varphi_\mathbf{u}(y)|\ge C'' \|D\varphi_\mathbf{u}\||x-y|.
\end{equation} 
Suppose $\varphi_\bu(x)\in B(z_p, r_p)$ and $\varphi_\bu(y)\in B(z_q, r_q)$ where $1\le p\le q\le l$. 
If $p = q$, let $L$ be the line segment connecting $\varphi_\bu(x)$ and $\varphi_\bu(y)$. If $p < q$, let $L$ be the polyline connecting the points in the following order:
$\varphi_\bu(x)$, $z_p$, $z_{p+1}$, $\dots$, $z_q$, $\varphi_\bu(y)$. We may parameterize $L$ by a continuous map $L:[0,1]\to\mathbb{R}^d$ with $L(0)=\varphi_\bu(x)$ 
and $L(1)=\varphi_\bu(y)$. For $s\in[0,1]$, define $L_s = \{L(t):0\le t\le s\}$ and let 
$|L_s|$ denote the length of $L_s$.  Note that $|L|\leq 2\sum_{j=1}^lr_j$

Let $t_0=\sup\{t\in[0,1] :L_t\subset\varphi_\bu(V)\}$. Then
$$
|L|\ge   |L_{t_0}|\ge C^{-1} \|D\varphi_\bu\|\it{dist}(\partial V, J)\frac{|x-y|}{|J|}.     
$$
Since $\varphi_\bu(x), \varphi_\bu(y)\in J$, if $|\varphi_\bu(x)-\varphi_\bu(y)|\le\delta$, then $|\varphi_\bu(x)-\varphi_\bu(y)|=|L|$, and if $|\varphi_\bu(x)-\varphi_\bu(y)|>\delta$, then   
$$
\frac{2\sum_{j=1}^lr_j}{\delta}|\varphi_\bu(x)-\varphi_\bu(y)|\ge2\sum_{j=1}^lr_j\ge|L|.
$$              
Thus
$$
|\varphi_\bu(x)-\varphi_\bu(y)|\ge\frac{\delta \it{dist}(\partial V, J)}{2C\sum_{j=1}^lr_j|J|}\|D\varphi_\mathbf{u}\||x-y|.
$$

Since $\Psi_\bu(x)=\varphi_\bu(x)+\omega_\bu$, it follows that
$$
|\Psi_\mathbf{u}(x)-\Psi_\mathbf{u}(y)|=|\varphi_\mathbf{u}(x)-\varphi_\mathbf{u}(y)|,
$$
and we have $\|D\varphi_\mathbf{u}\||x-y|\asymp|\varphi_\mathbf{u}(x)-\varphi_\mathbf{u}(y)|$.
\end{proof}

\begin{lem}\label{vol}
Let $\Phi$ be a non-autonomous finite or infinite conformal iterated function system. For all $\mathbf{u}\in\Sigma^*$ and $A\subset V$ we have
\begin{equation}
C^{-d}\|D\varphi_\mathbf{u}\|^d\mathcal{L}^d(A)\le\mathcal{L}^d(\Psi_\mathbf{u}(A))\le C^d\|D\varphi_\mathbf{u}\|^d\mathcal{L}^d(A).
\end{equation}
\end{lem}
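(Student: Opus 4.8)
The plan is to reduce everything to the change-of-variables formula for the conformal diffeomorphism $\varphi_\bu$. First I would note that $\Psi_\bu(x)=\varphi_\bu(x)+\omega_\bu$ is $\varphi_\bu$ followed by a translation, and Lebesgue measure is translation invariant, so $\mathcal{L}^d(\Psi_\bu(A))=\mathcal{L}^d(\varphi_\bu(A))$; this removes the $\omega_\bu$ from the picture. By condition (i') of Subsection~\ref{subsec_NCIFS}, each $\varphi_\bu$ is a $C^1$ conformal diffeomorphism of $V$ onto $V$, so for a measurable $A\subset V$ the change-of-variables formula gives
$$
\mathcal{L}^d(\varphi_\bu(A))=\int_A |\det D\varphi_\bu(x)|\, d\mathcal{L}^d(x).
$$
For a general (not necessarily measurable) $A\subset V$ one passes to outer measure and uses monotonicity; I would not dwell on this point.

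Next I would use conformality pointwise. For each $x\in V$ the map $D\varphi_\bu(x)$ is a similarity linear map, i.e.\ of the form $\lambda_x O_x$ with $\lambda_x=\|D\varphi_\bu(x)\|>0$ and $O_x$ orthogonal, so that $|\det D\varphi_\bu(x)|=\lambda_x^{\,d}=\|D\varphi_\bu(x)\|^d$. Substituting, $\mathcal{L}^d(\varphi_\bu(A))=\int_A \|D\varphi_\bu(x)\|^d\, d\mathcal{L}^d(x)$.

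Finally I would invoke bounded distortion, condition (ii'): for all $x,y\in V$ we have $\|D\varphi_\bu(x)\|\le C\|D\varphi_\bu(y)\|$. Since this holds for every $y\in J\subset V$ in both directions and $\|D\varphi_\bu\|=\sup_{y\in J}\|D\varphi_\bu(y)\|$, it follows that $C^{-1}\|D\varphi_\bu\|\le\|D\varphi_\bu(x)\|\le C\|D\varphi_\bu\|$ for every $x\in V$. Raising to the $d$-th power and integrating over $A$ yields
$$
C^{-d}\|D\varphi_\bu\|^d\,\mathcal{L}^d(A)\le \mathcal{L}^d(\varphi_\bu(A))\le C^d\|D\varphi_\bu\|^d\,\mathcal{L}^d(A),
$$
which together with $\mathcal{L}^d(\Psi_\bu(A))=\mathcal{L}^d(\varphi_\bu(A))$ is exactly the claim. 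The only step that needs a little care — and the closest thing to an obstacle — is matching the constant: the supremum defining $\|D\varphi_\bu\|$ is taken over $J$ whereas bounded distortion is formulated on $V\supset J$, so one must check (as just indicated) that the two-sided comparison between $\|D\varphi_\bu(x)\|$ and $\|D\varphi_\bu\|$ still holds for all $x\in V$ with the same constant $C$. Beyond that, the proof is a routine change-of-variables computation.
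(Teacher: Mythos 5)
Your proposal is correct and follows essentially the same route as the paper: change of variables gives $\mathcal{L}^d(\Psi_\bu(A))=\int_A\|D\varphi_\bu(x)\|^d\,d\mathcal{L}^d$ (using conformality to identify $|\det D\varphi_\bu(x)|$ with $\|D\varphi_\bu(x)\|^d$), and bounded distortion then yields the two-sided bound. Your extra care about the translation $\omega_\bu$ and about comparing $\|D\varphi_\bu(x)\|$ for $x\in V$ with the supremum taken over $J$ is a sound refinement of the paper's terser argument.
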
	

\begin{proof}
Since $\varphi_\bu$ is a $C^1$ conformal diffeomorphism, it is clear that
$$
\mathcal{L}^d(\Psi_\mathbf{u}(A))=\int_{A}\|D\varphi_\mathbf{u}(x)\|^dd\mathcal{L}^d.
$$
By bounded distortion, we have
$$
C^{-d}\|D\varphi_\mathbf{u}\|^d\mathcal{L}^d(A)\le\mathcal{L}^d(\Psi_\mathbf{u}(A))\le \|D\varphi_\mathbf{u}\|^d\mathcal{L}^d(A),
$$
and the conclusion holds.
\end{proof}

\section{ intermediate dimension  of non-autonomous finite conformal set} \label{sec_ID}
In this section, we study intermediate dimensions of the non-autonomous finite conformal set $E$ generated by the NFCIFS $\Phi$. Recall
$$
\mathcal{M}(\delta, \theta)=\{\mathcal{M}:\mathcal{M} \textit{ is a cut set satisfying }\delta^\frac{1}{\theta} <  \|D\varphi_{\mathbf{u}^*}\| \textit{ and }   \|D\varphi_\mathbf{u}\|\le \delta \textit { for each } \mathbf{u} \in \mathcal{M}\},
$$

For  $F\subset \mathbb{R}^d$ such that  $E\cap F\ne \emptyset$,   we write
\begin{equation}\label{def_Aset}
A(F)=\{\mathbf{u} \in \Sigma^* : J_\mathbf{u}\cap F \ne \emptyset,  \|D\varphi_\mathbf{u}\|\le\delta< \|D\varphi_{\mathbf{u}^*}\| \}.
\end{equation}
Let
\begin{equation}\label{def_k0}
k_F^- = \min\{k:|\mathbf{u}|=k, \mathbf{u}\in A(F)\}, \qquad
k_F^+ = \max\{k:|\mathbf{u}|=k, \mathbf{u}\in A(F)\}.
\end{equation}
For each integer $k_F^-\le k\leq k_F^+$, we write
\begin{equation}\label{def_DFk}
D(F, k)=\{\mathbf{u} \in \Sigma^k :\mathbf{u} \in A(F)\}.
\end{equation}

\begin{lem}\label{finite intersection}
Let $\Phi$ be a non-autonomous finite conformal iterated function system satisfying OSC.  Then there exists  a constant $C_2$ such that for  every  $F \subset \mathbb{R}^d$ with  $E\cap F\ne \emptyset$, we have
$$
\sum_{k = k_F^-}^{k_F^+}\underline c_k^d\# D(F, k)\le C_2.
$$
\end{lem}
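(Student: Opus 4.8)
The plan is to exploit the bounded-overlap structure forced by the open set condition together with a volume-packing argument in $\mathbb{R}^d$. Fix $F \subset \mathbb{R}^d$ with $E \cap F \neq \emptyset$. For a word $\mathbf{u} \in D(F,k)$, the defining inequality $|J_{\mathbf{u}^*}| \le \delta$ fails, so $|J_{\mathbf{u}^*}| > \delta$, while $|J_\mathbf{u}| \le \delta$; combining with Lemma \ref{contraction} this pins $\|D\varphi_\mathbf{u}\|$ between constant multiples of $\delta$ — more precisely $C_1^{-1}\delta$-ish lower bounds via the parent. The key point is that for each such $\mathbf{u}$, Lemma \ref{vol} gives $\mathcal{L}^d(J_\mathbf{u}) = \mathcal{L}^d(\Psi_\mathbf{u}(J)) \ge C^{-d}\|D\varphi_\mathbf{u}\|^d \mathcal{L}^d(J) \ge c' \underline c_{|\mathbf{u}|}^d$ for a constant $c'$ depending only on $J$, $C$, $c$ (here one uses that $\|D\varphi_\mathbf{u}\| \ge c'' \underline c_k$ when $\mathbf{u} \in \Sigma^k$, e.g. via Lemma \ref{cor_subMul} applied to the parent word whose norm exceeds $\sim\delta$, or directly from $m_k \ge$ const$\cdot \underline c_k$ type bounds). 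Actually the cleanest route: $\|D\varphi_\mathbf{u}\| = \|D\varphi_{\mathbf{u}^*}\| \cdot (\text{something} \ge \underline c_k)$ up to the distortion constant $C$, and $\|D\varphi_{\mathbf{u}^*}\| \ge C_1^{-1}|J_{\mathbf{u}^*}| > C_1^{-1}\delta$, so $\|D\varphi_\mathbf{u}\| \gtrsim \delta \underline c_k$. Hence $\mathcal{L}^d(J_\mathbf{u}) \gtrsim \delta^d \underline c_k^d$. Wait — that gives $\delta^d \underline c_k^d$, not $\underline c_k^d$; I need to be careful, but the bound on the number of $\mathbf{u}$ will then compensate via the volume of a $\delta$-neighbourhood of $F$. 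Let me restructure.

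First I would bound $\# D(F,k)$ for each fixed $k$. All the sets $\{J_\mathbf{u} : \mathbf{u} \in D(F,k)\}$ lie within distance $\mathrm{diam}(J_\mathbf{u}) \le \delta$ of $F$, hence inside a common bounded region $F_\delta$ (the $\delta$-neighbourhood of $F\cap E$, which is nonempty). Since $\mathbf{u},\mathbf{v} \in D(F,k) \subset \Sigma^k$ are distinct words of the same length $k$, OSC gives $\mathrm{int}(J_\mathbf{u}) \cap \mathrm{int}(J_\mathbf{v}) = \emptyset$ — indeed distinct length-$k$ words either disagree at some coordinate $j \le k$, forcing disjoint interiors one level down and hence disjoint interiors for $J_\mathbf{u}, J_\mathbf{v}$. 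Therefore $\sum_{\mathbf{u} \in D(F,k)} \mathcal{L}^d(J_\mathbf{u}) \le \mathcal{L}^d(F_\delta)$. Combined with $\mathcal{L}^d(J_\mathbf{u}) \ge c' \|D\varphi_\mathbf{u}\|^d \ge c'' (\delta \underline c_k)^d$ (the chain above), this yields $\# D(F,k) \le c''^{-1} \mathcal{L}^d(F_\delta)\, \delta^{-d}\underline c_k^{-d}$, i.e.\ $\underline c_k^d \# D(F,k) \le c''^{-1}\mathcal{L}^d(F_\delta)\delta^{-d}$.

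Summing over $k$ from $k_F^-$ to $k_F^+$ I need the number of levels $k_F^+ - k_F^- + 1$ to be controlled, and $\mathcal{L}^d(F_\delta)$ to be controlled by $\delta^d$ times a constant, to cancel the $\delta^{-d}$. The second is automatic: $F_\delta$ is the $\delta$-neighbourhood of a single point of $E\cap F$ intersected with... no — $F$ can be large. The resolution is that $A(F)$ only captures words $\mathbf{u}$ with $J_\mathbf{u} \cap F \ne\emptyset$ \emph{and} $J_\mathbf{u} \subset$ (nbhd of a point of $E\cap F$); but actually the right observation is that we should take $F_\delta$ to be the $\delta$-neighbourhood of an arbitrary \emph{fixed point} $z \in E \cap F$ is wrong too. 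Instead: the sets $J_\mathbf{u}$ for $\mathbf{u}\in A(F)$ need not all be near one point. Here the honest fix is that the statement's constant $C_2$ is genuinely universal only because the extra $\delta^{-d}$ is absorbed differently — I suspect the actual argument replaces the crude volume bound with: all $J_\mathbf{u}$, $\mathbf{u}\in D(F,k)$, that meet $F$ lie in $J$ itself (as $J_\mathbf{u}\subset J$), so $\sum_{\mathbf{u}\in D(F,k)}\mathcal{L}^d(J_\mathbf{u}) \le \mathcal{L}^d(J)$, giving $\underline c_k^d \# D(F,k) \le c''^{-1}\mathcal{L}^d(J)\,\delta^{-d}\cdot(\text{something})$ — still the $\delta^{-d}$. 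The genuine main obstacle, and the step I would spend the most care on, is therefore handling the $\delta$-dependence: I expect one uses $|J_\mathbf{u}| \le \delta < |J_{\mathbf{u}^*}|$ together with $|J_{\mathbf{u}^*}| \le |J_{\mathbf{u}^{**}}| \le \cdots$ and the uniform contraction $|J_{\mathbf{u}^*}|/|J_\mathbf{u}| \le$ (not bounded below!) — so actually $|J_\mathbf{u}|$ can be much smaller than $\delta$. Hmm. Given the intended constant is dimension-free, I believe the correct argument bounds $\sum_k \underline c_k^d \#D(F,k)$ by comparing each $J_\mathbf{u}$ to the ball $B(z_\mathbf{u}, |J_\mathbf{u}|)$ and using that these, across \emph{all} levels $k$, have controlled overlap because a point $x$ can lie in $J_\mathbf{u}$ for at most one $\mathbf{u}$ per "scale band'' and the scale bands $[\underline c_k \delta, \delta]$ for different $k$ telescope — making $\sum_k \underline c_k^d \#D(F,k) \lesssim \sum_\mathbf{u} \mathcal{L}^d(J_\mathbf{u})/\mathcal{L}^d(J) \lesssim \mathcal{L}^d(J)/\mathcal{L}^d(J)$-type bound once one shows the $J_\mathbf{u}$ for $\mathbf{u} \in A(F)$ have bounded overlap multiplicity. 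Thus the real work is a bounded-multiplicity lemma: no point of $\mathbb{R}^d$ lies in more than a constant number of the sets $\{J_\mathbf{u} : \mathbf{u}\in A(F)\}$ — which follows from OSC plus comparability of sizes within each level and a standard volume-counting argument — after which $\sum_{\mathbf{u}\in A(F)} \mathcal{L}^d(J_\mathbf{u}) \le (\text{mult.})\cdot \mathcal{L}^d(J)$ and then $\underline c_k^d \lesssim \mathcal{L}^d(J_\mathbf{u})$ (Lemma \ref{vol}, Lemma \ref{contraction}) gives the result with $C_2$ depending only on $d$, $C$, $C_1$, and $\mathcal{L}^d(J)$.
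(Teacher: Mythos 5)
You have the right ingredients --- the lower bound $\|D\varphi_\mathbf{u}\|\gtrsim \delta\,\underline c_{|\mathbf{u}|}$ obtained through the parent word, the OSC-disjointness of the interiors $\Psi_\mathbf{u}(\mathrm{int}(J))$, and a volume-packing count --- but you never close the argument, and the place where you stall is exactly the point the paper's proof resolves in one line. The paper sets $\delta=|F|$ (this is the intended reading of the ambient $\delta$ in the definition of $A(F)$; in the application $F=U$ is a single covering set). Then every $J_\mathbf{u}$ with $\mathbf{u}\in A(F)$ meets $F$ and has diameter at most $\delta$, so for any fixed $x\in F$ all of these sets lie in $B(x,2\delta)$, whose Lebesgue measure is $(2\delta)^d\mathcal{L}^d(B(0,1))$. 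Moreover $A(F)$ is an antichain (if $\mathbf{u}\prec\mathbf{v}$ were both in $A(F)$ one would get $|J_{\mathbf{v}^*}|\le|J_\mathbf{u}|\le\delta$, contradicting $\delta<|J_{\mathbf{v}^*}|$), so by OSC the interiors are pairwise disjoint \emph{across all levels $k$ simultaneously}, and a single volume comparison gives
$$
\frac{\mathcal{L}^d(\mathrm{int}(J))}{(C_1C^2)^d}\sum_{k=k_F^-}^{k_F^+}\underline c_k^d\,\#D(F,k)\,\delta^d\;\le\;\sum_{\mathbf{u}\in A(F)}\mathcal{L}^d(\Psi_\mathbf{u}(\mathrm{int}(J)))\;\le\;(2\delta)^d\mathcal{L}^d(B(0,1)).
$$
The $\delta^d$ cancels on both sides, which is precisely the cancellation you said you could not see; your worry that ``$F$ can be large'' relative to $\delta$ never arises because $\delta$ \emph{is} the diameter of $F$.

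The fallback route you sketch at the end does not repair this. Bounding $\sum_{\mathbf{u}\in A(F)}\mathcal{L}^d(J_\mathbf{u})$ by $\mathcal{L}^d(J)$ and then invoking ``$\underline c_k^d\lesssim\mathcal{L}^d(J_\mathbf{u})$'' is false: by Lemmas \ref{contraction} and \ref{vol} one has $\mathcal{L}^d(J_\mathbf{u})\asymp\|D\varphi_\mathbf{u}\|^d$, and $\|D\varphi_\mathbf{u}\|$ is comparable to the \emph{product} of the contraction ratios along $\mathbf{u}$ (hence at most $c^{k}$), not to the single-level quantity $\underline c_k$. So that estimate loses a factor of order $\delta^{-d}$ rather than gaining one, and the proposed ``bounded multiplicity across scale bands'' lemma is both unnecessary (the antichain property already gives multiplicity one for the interiors) and insufficient to produce a uniform constant. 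As written, the proposal therefore has a genuine gap at its central step.
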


\begin{proof}
Given a set $F \subset \mathbb{R}^d$ such that   $E\cap F\ne \emptyset$. Let $\delta=|F|$. For every $\bu\in \mathcal{M}(\delta) $, we write $\bu=u_1u_2\ldots u_k$.
By  Lemma \ref{contraction}, we  have $ \delta \leq  \|D\varphi_{\mathbf{u}^*}\|. $
Recall $\underline c_k=\min_{1\le j \le \# I_k} \{\|D\varphi_{k, j}\|\}$, and by Lemma \ref{cor_subMul}, it is clear that for all $x\in J$
$$
\|D\varphi_\mathbf{u}\| \ge \|D\varphi_{\mathbf{u}^*}(\varphi_{u_k}(x))D\varphi_{u_k}(x)\|  \ge  C^{-1}\|D\varphi_{\mathbf{u}^*}\|\underline c_k \ge C_1^{-1}C^{-1}\underline c_k\delta.
$$
Arbitrarily choose $x\in F$,  and  we have $J_\mathbf{u} \subset B(x,2\delta)$ for every $ J_\mathbf{u} \in A(F)$. By Lemma \ref{contraction} and Lemma \ref{vol}, it immediately follows that
\begin{eqnarray*}
\frac{ \mathcal{L}^d(\mbox{int}(J))}{(C_1C^2)^{d}}\sum_{k = k_F^-}^{k_F^+}\underline c_k^d\# D(F, k)\delta ^d &\le&\frac{ \mathcal{L}^d(\mbox{int}(J))}{C^{d}}\sum_{k = k_F^-}^{k_F^+}\sum_{\mathbf{u} \in D(F, k)} \|D\varphi_\mathbf{u}\|^d   \\
&=&\frac{ \mathcal{L}^d(\mbox{int}(J))}{C^{d}} \sum_{\mathbf{u} \in A(F)} \|D\varphi_\mathbf{u}\|^d  \\
&\le& \mathcal{L}^d(B(x,2\delta).
\end{eqnarray*}
Setting    $C_2=\frac{(2C_1 C^2)^d \mathcal{L}^d(B(0,1))}{ \mathcal{L}^d((\mbox{int}J))}$,  we obtain $\sum_{k = k_F^-}^{k_F^+}\underline c_k^d\# D(F, k)\le C_2 $.
\end{proof}

For each cut set $ \mathcal{M}$ satisfying $ \delta^{\frac{1}{\theta}} <\|D\varphi_{\mathbf{u}^*}\| $  and $ \|D\varphi_\mathbf{u}\|\le \delta$ for all $\mathbf{u} \in \mathcal{M}$. 
Let $\mathcal{M}'=\{\bu\in\mathcal{M}:|J_\bu|\le\delta\}$ and $\mathcal{M}''=\{\bu\in\mathcal{M}:|J_\bu|>\delta\}$.

We define $\mathcal{F}_{\mathcal{M}'}=\{U_{\mathbf{u}}: \mathbf{u}\in \mathcal{M}'\}$ by setting
		$$
		U_\mathbf{u}=\left\{ \begin{array}{ll}
		J_\mathbf{u}& \textit{ if } \delta^{\frac{1}{\theta}}\le |J_\mathbf{u}|\le\delta, \\
		\bigcup_{x\in J_\mathbf{u}} B(x,\frac{\delta^\frac{1}{\theta}- |J_{\mathbf{u}}|}{2}) & \textit{ if } |J_\mathbf{u}|<\delta^{\frac{1}{\theta}}.
		\end{array} \right.
		$$
For $\bu\in\mathcal{M}''$, we define a cover $\mathcal{F}_\mathcal{\bu}=\{U_{\bu_i}\}_{i=1}^{k_\bu}$ of $J_\bu$ satisfying $|U_{\bu_i}|=\delta$. 

Let 
$$
\mathcal{F}_\mathcal{M}=\mathcal{F}_\mathcal{M'}\cup\bigcup_{\bu\in\mathcal{M}''}\mathcal{F}_\bu.
$$

\begin{proof}[Proof of Theorem \ref{thm_IMdim}]
We only give the proof for the lower intermediate dimension since the proof for the upper intermediate dimension is similar.
	
First, we prove $\lid E\geq s_\theta$. Arbitrarily choose $\alpha<s_\theta$. Recall that $s_\theta =\sup\{t:\underline P(t, \theta)>0\}$, and we have $\underline P(\alpha, \theta)>0$. There exists $\Delta_1 >0$ such that   for all  $0<\delta <\Delta_1$, we have that
\begin{equation}\label{eq 1}
 \min_{\mathcal{M}\in\mathcal{M}(\delta, \theta)}\{\sum_{\mathbf{u}\in\mathcal{M}}\|D\varphi_\mathbf{u}\|^\alpha\}>e^\frac{k_{\delta}\underline P(\alpha, \theta)}{2}>1.
\end{equation}
  Since $\lim_{k\to +\infty} \frac{\log\underline c_k}{\log M_k}=0$, for each $0<\eta<1$, there exists $K_0>0$ such that  for $k>K_0$, we have
\begin{equation}\label{Mk<ck1}
M_k^\eta<\underline c_k^d.
\end{equation}
Moreover there exists $\Delta_2$ such that for all $0<\delta<\Delta_2$,  we have $|\mathbf{u}|>K_0$  for all $\bu\in\Sigma^*$ satisfying $\|D\varphi_\mathbf{u}\|\le\delta$. 

Given a  cover  $\mathcal{F}$ of $E$ such that  $ {\delta^\frac{1}{\theta}}\le | U| \le \delta$ for each $U\in \mathcal{F}$.  By Lemma \ref{finite intersection},  there exists  a constant $C_2$ such that for  every  $U\in \mathcal{F} $ with  $E\cap U\ne \emptyset$, we have
\begin{equation}\label{sum<C1}
		\sum_{k = k_U^-}^{k_U^+}\underline c_k^d\# D(U, k)\le C_2,
\end{equation}
where $k_U^-$ is given by \eqref{def_k0}. By \eqref{def_Aset},  for $\bu\in A(U)$, it follows that $$
\sum_{\mathbf{u} \in D(U, k)} \|D\varphi_\mathbf{u}\|^\alpha \leq \sum_{\mathbf{u} \in D(U, k)}  M_k^\eta \lvert U\rvert^{\alpha-\eta} \leq \underline c_k^d\# D(U, k) |U\rvert^{\alpha-\eta}
$$
  Combining this with \eqref{Mk<ck1} and \eqref{sum<C1},  we have for $k_U^- >K_0$
		\begin{eqnarray*}
	\sum_{ U\in\mathcal{F}} \sum_{\mathbf{u}\in A(U)} \|D\varphi_\mathbf{u}\|^\alpha  &=&\sum_{U\in\mathcal{F}} \sum_{k = k_U^-}^{k_U^+}\sum_{\mathbf{u} \in D(U, k)} \|D\varphi_\mathbf{u}\|^\alpha  \le C'\sum_{U\in\mathcal{F}} \lvert U\rvert^{\alpha-\eta},
		\end{eqnarray*}
where $C'$ is a constant independent of $\mathcal{F}$.	
	
Let $	\mathcal{F_1} =\{\mathbf{u}:\mathbf{u}\in A(U), U\in\mathcal{F}\}$. It is clear that $\mathcal{F_1}$ is a cut set satisfying $\delta^\frac{1}{\theta} < \|D\varphi_{\mathbf{u}^*}\|$ and $\|D\varphi_{\mathbf{u}}\|\leq \delta$. Moreover, we choose a finite cut set
$$
\{{\mathbf{u}_i}\}_{i = 1}^{n}\subset \bigcup_{U\in\mathcal{F}} A(U)
$$
such that $\{\mathbf{u}_i\}_{i=1}^n\subset\mathcal{M}(\delta, \theta).$
By \eqref{eq 1}, it follows that
$$
\sum_{ U\in\mathcal{F}} \sum_{\mathbf{u}\in A(U)} \|D\varphi_\mathbf{u}\|^\alpha  \geq \sum_{i = 1}^{n} \|D\varphi_{\mathbf{u}_i}\|^\alpha >\min_{\mathcal{M}\in\mathcal{M}(\delta, \theta)}\{\sum_{\mathbf{u}\in\mathcal{M}}\|D\varphi_\mathbf{u}\|^\alpha\}>1,
$$		
and it implies that
$$
\sum_{U\in\mathcal{F}} \lvert U\rvert^{\alpha-\eta}\geq \frac{1}{C'}.
$$

Setting $\epsilon_0=\frac{1}{C'}$ and $\Delta_0=\min\{\Delta_1, \Delta_2\}$,  for every $0<\delta<\Delta_0$  and every cover $\mathcal{F}$ satisfying  $ {\delta^\frac{1}{\theta}}\le | U| \le \delta$ for all $U\in \mathcal{F}$,    we have that
		$$
		\sum_{U\in\mathcal{F}} \lvert U\rvert^{\alpha-\eta} \geq \epsilon_0.
		$$
		It implies that $ \lid E \ge \alpha-\eta$. Since $\eta >0$ and $\alpha < s^\theta$ are  arbitrarily chosen, we obtain that $  \lid E \ge s_\theta.$

		Next, we prove $\lid E\leq s_\theta$. Arbitrarily choose $\beta>\gamma>s_\theta$.
Since $\gamma>s_\theta$, that is $\underline P(\gamma, \theta)<0$, there exists a sequence $\{\delta_k\}_{k=1}^\infty$ convergent to $0$ such that
$$
 \min_{\mathcal{M}\in\mathcal{M}(\delta_k, \theta)}\{\sum_{\mathbf{u}\in\mathcal{M}}\|D\varphi_\mathbf{u}\|^\gamma\}<e^\frac{p_{\delta_k, \theta}\underline P(\gamma)}{2}<1.
$$
For each $k>0$, there exists a  cut set $\mathcal{M}_k$ such that $\delta_k^{\frac{1}{\theta}} < \|D\varphi_{\mathbf{u}^*}\|$   and $ \|D\varphi_\mathbf{u}\|\le \delta_k$ for all $\mathbf{u} \in \mathcal{M}_k$  satisfying
\begin{equation}\label{skd=1}
\sum_{\mathbf{u}\in \mathcal{M}_k} \|D\varphi_\mathbf{u}\|^\gamma <1 .
\end{equation}
By  \eqref{condition}, there exists an integer  $K_1>0$, such that  for all  $k>K_1$,
\begin{equation}\label{MC<1}
\underline c_k^{- \beta}M_k^{\beta-\gamma} <1.
\end{equation}
By  Lemma \ref{cor_subMul} and Lemma \ref{contraction},  we have that
$$
\|D\varphi_{\bu^*}\| \leq  C (\underline{c}_{|\bu|})^{-1}\|D\varphi_\mathbf{u}\|
$$
Since $|J_\bu|\le C_1\delta$ there exists a constant $C''$ such that For each integer  $k>0$ and each $\bu\in\mathcal{M}_k''$, 
$$
\#\mathcal{M}_k''\le C''.
$$
 Since $\beta>\gamma>s_\theta$, by \eqref{skd=1} and \eqref{MC<1},  we obtain that
\begin{eqnarray*}
	\sum_{U\in \mathcal{F}_{\mathcal{M}_k}} | U|^{\beta}&\leq &C''\sum_{\mathbf{u}\in \mathcal{M}_k}  \underline c_{|\bu|}^{-\beta} \|D\varphi_\mathbf{u}\|^{\beta} \leq C''\sum_{\mathbf{u}\in \mathcal{M}_k} \|D\varphi_\mathbf{u}\|^{\gamma}    \Big(\underline c_{|\bu|}^{-\beta} M_{|\mathbf{u}|}^{\beta-\gamma} \Big) <C''.
		\end{eqnarray*}
		It follows that $\lid E \le \beta$.  Since $\beta \geq s^\theta$ is arbitrarily chosen, we obtain that	$\lid E \le s^\theta$.
\end{proof}

\section{box dimension and packing dimension of non-autonomous finite conformal set}\label{sec_PAB}

Let $E$ be the non-autonomous finite conformal set of the NFCIFS $\Phi$. According to the definition of the intermediate dimension, the box dimension is obtained by $s^\theta$ at $\theta=1$, and we may show the relationship between the box dimension and the jump point of the upper pressure function.

\begin{lem}\label{box d}
Let $E$ be the non-autonomous finite conformal set of the NFCIFS $\Phi$ satisfying  \eqref{condition} and OSC. Then  the box dimensions of $E$  is given by
$$
\ubd E=s^*,
$$
where $s^*$ is given by \eqref{def_s*}.
\end{lem}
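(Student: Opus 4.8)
The plan is to prove the two inequalities $\ubd E\le s^*$ and $\ubd E\ge s^*$ separately. In both parts the natural object is the cut set at scale $\delta$,
$$\mathcal{M}'(\delta)=\{\mathbf{u}\in\Sigma^*:\ |J_\mathbf{u}|\le\delta<|J_{\mathbf{u}^*}|\},$$
which, by the uniform contraction \eqref{def_uccdn}, consists of words of length at most $k_{\max}(\delta):=\log(1/\delta)/\log(1/c)+O(1)$ and hence is finite; since $E=\pi_\Phi(\Sigma^\infty)$ by Lemma \ref{Sigma=E}, the family $\{J_\mathbf{u}:\mathbf{u}\in\mathcal{M}'(\delta)\}$ covers $E$ by sets of diameter $\le\delta$. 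Throughout I will use the comparisons $|J_\mathbf{u}|\asymp\|D\varphi_\mathbf{u}\|$ (Lemma \ref{contraction}), $\|D\varphi_{\mathbf{uv}}\|\asymp\|D\varphi_\mathbf{u}\|\,\|D\varphi_\mathbf{v}\|$ (Lemma \ref{cor_subMul}), $\mathcal{L}^d(\Psi_\mathbf{u}(A))\asymp\|D\varphi_\mathbf{u}\|^d\mathcal{L}^d(A)$ (Lemma \ref{vol}), the strict monotonicity of $\overline P$ (Corollary \ref{ddx}), and the elementary fact that $M_k$ is decreasing in $k$ (from the chain rule and $\|D\varphi_{k,j}\|\le c<1$), which gives in particular $|\log M_k|\gtrsim k$.

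For $\ubd E\le s^*$, fix $t'>s^*$; then $\overline P(t')<0$, so $\sum_{\mathbf{u}\in\Sigma^k}\|D\varphi_\mathbf{u}\|^{t'}\le e^{\overline P(t')k/2}$ for all large $k$. I estimate $N_\delta(E)\le\#\mathcal{M}'(\delta)=\sum_k\#(\mathcal{M}'(\delta)\cap\Sigma^k)$ level by level. For $\mathbf{u}\in\mathcal{M}'(\delta)\cap\Sigma^k$, the bound $|J_{\mathbf{u}^*}|>\delta$ with Lemmas \ref{contraction} and \ref{cor_subMul} gives $\|D\varphi_\mathbf{u}\|\ge c'\underline c_k\delta$ for an absolute $c'>0$, whence
$$\#(\mathcal{M}'(\delta)\cap\Sigma^k)\le(c'\underline c_k\delta)^{-t'}\sum_{\mathbf{u}\in\Sigma^k}\|D\varphi_\mathbf{u}\|^{t'}\le(c'\underline c_k\delta)^{-t'}e^{\overline P(t')k/2}.$$
Only levels $k\le k_{\max}(\delta)\lesssim\log(1/\delta)$ contribute, and for each such $k$ one has $M_k\ge M_{k_{\max}(\delta)}\gtrsim\delta$ (since $M$ is decreasing and $\mathcal{M}'(\delta)$ contains a word whose parent has diameter $>\delta$); then \eqref{condition} forces $|\log\underline c_k|=o(|\log M_k|)=o(\log(1/\delta))$ uniformly over these $k$, so for any prescribed $\epsilon>0$ one gets $\underline c_k^{-t'}\le C(\epsilon)\delta^{-t'\epsilon}$. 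Summing the resulting geometric series in $k$ yields $N_\delta(E)\le C(\epsilon)\delta^{-t'(1+\epsilon)}$, so $\ubd E\le t'(1+\epsilon)$; letting $\epsilon\to0$ and $t'\downarrow s^*$ gives $\ubd E\le s^*$. This half uses neither OSC nor the geometry of $J$.

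For $\ubd E\ge s^*$, fix $t<s^*$; then $\overline P(t)>0$, so there is a sequence $k_j\to\infty$ with $\sum_{\mathbf{u}\in\Sigma^{k_j}}\|D\varphi_\mathbf{u}\|^t\ge1$. Decompose $\Sigma^{k_j}$ into the dyadic blocks $\mathcal{A}_l=\{\mathbf{u}\in\Sigma^{k_j}:\ 2^{-l-1}M_{k_j}<\|D\varphi_\mathbf{u}\|\le2^{-l}M_{k_j}\}$ for $0\le l\le L_j:=\lceil\log_2(M_{k_j}/m_{k_j})\rceil$. Since $1\le\sum_{\mathbf{u}\in\Sigma^{k_j}}\|D\varphi_\mathbf{u}\|^t\le\sum_l\#\mathcal{A}_l\,(2^{-l}M_{k_j})^t$, pigeonholing gives an $l^*$ with $\#\mathcal{A}_{l^*}\ge\delta_j^{-t}/(L_j+1)$, where $\delta_j:=2^{-l^*}M_{k_j}$. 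The cylinders $J_\mathbf{u}$, $\mathbf{u}\in\mathcal{A}_{l^*}$, all have diameter $\asymp\delta_j$ and meet $E$, and by OSC their interiors are pairwise disjoint (an elementary consequence of OSC for any cut set), each of volume $\gtrsim\delta_j^d$ by Lemma \ref{vol}; hence every $\delta_j$-mesh cube meets only $O(1)$ of them, and a routine comparison between the $\delta_j$-mesh and $E$ yields $N_{\delta_j}(E)\gtrsim\#\mathcal{A}_{l^*}\gtrsim\delta_j^{-t}/(L_j+1)$. Finally $\log(1/\delta_j)\ge|\log M_{k_j}|\gtrsim k_j$, while $L_j\le|\log m_{k_j}|\le\sum_{i\le k_j}|\log\underline c_i|+O(k_j)$, and \eqref{condition} (using $|\log M_i|\le|\log M_{k_j}|$ for $i\le k_j$ together with Cesàro summation) gives $\sum_{i\le k_j}|\log\underline c_i|=o(|\log M_{k_j}|^2)$, so $\log(L_j+1)\lesssim\log|\log M_{k_j}|=o(|\log M_{k_j}|)=o(\log(1/\delta_j))$. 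Therefore $\log N_{\delta_j}(E)/\log(1/\delta_j)\to t$, so $\ubd E\ge t$, and letting $t\uparrow s^*$ finishes the proof.

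The hard part is purely the interplay of scales: a single level $k$ of the construction tree does not correspond to a single metric scale, since the diameters of level-$k$ cylinders range over $[m_k,M_k]$, an interval that can be exponentially long. Condition \eqref{condition} is exactly the hypothesis that keeps $\underline c_k$ — equivalently, the logarithmic width of that interval relative to $|\log M_k|$ — negligible across the band of $O(\log(1/\delta))$ levels relevant to scale $\delta$, and this is precisely what allows the level-wise pressure estimates to be converted into genuine $\delta$-covering estimates in both directions; without it the transfer between "per level" and "per scale" counting breaks down, which is why the lemma is stated under \eqref{condition}.
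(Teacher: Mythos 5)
Your proof is correct, but it takes a genuinely different route from the paper's. The paper proves this lemma by specialising Theorem \ref{thm_IMdim} to $\theta=1$ (so $\ubd E=s^1$) and then closing the chain of inequalities $s^1\le t^*\le s^*\le s^1$, where $t^*=\inf\{t:\sum_k\sum_{\bu\in\Sigma^k}\|D\varphi_\bu\|^t<\infty\}$ is an auxiliary series exponent; the geometric work is thus outsourced to the intermediate-dimension machinery of Section \ref{sec_ID}, and the lemma itself becomes a comparison of critical exponents of three pressure-type quantities. You instead give a direct, self-contained covering argument: a Chebyshev count of the cut set $\mathcal{M}'(\delta)$ level by level for the upper bound, and for the lower bound a pigeonhole over dyadic bands of $\|D\varphi_\bu\|$ inside a single good level $k_j$, followed by a disjoint-interiors volume argument to pass from the cylinder count to $N_{\delta_j}(E)$. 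Your approach buys independence from Theorem \ref{thm_IMdim} (and in particular makes transparent where OSC enters --- only in the lower bound) and isolates exactly how \eqref{condition} converts per-level pressure estimates into per-scale counting estimates, at the price of the somewhat delicate bookkeeping of which levels $k$ contribute at scale $\delta$; the paper's approach buys brevity and reuse of Lemma \ref{finite intersection}, but hides the geometry inside the $\theta=1$ case of the spectrum theorem.

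Two small points to tighten. First, in the upper bound the assertion $M_{k_{\max}(\delta)}\gtrsim\delta$ with $k_{\max}(\delta)=\log(1/\delta)/\log(1/c)+O(1)$ is false in general ($M_k$ can decay superexponentially); what is true, and what you actually need, is that for every level $k$ with $\mathcal{M}'(\delta)\cap\Sigma^k\ne\emptyset$ one has $M_{k-1}\ge C_1^{-1}\delta$, and then the one-step bootstrap $|\log M_k|\le|\log M_{k-1}|+|\log\underline c_k|+\log C$ combined with $|\log\underline c_k|\le\epsilon|\log M_k|$ gives $|\log\underline c_k|\le\tfrac{\epsilon}{1-\epsilon}(\log(1/\delta)+O(1))$ for all contributing $k$ beyond a fixed threshold. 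Second, in the lower bound the diameters of the $J_\bu$ with $\bu\in\mathcal{A}_{l^*}$ are only comparable to $\delta_j$ up to the constant $C_1$ of Lemma \ref{contraction}, so the mesh-cube multiplicity bound should be stated with that constant absorbed; this changes nothing. With these adjustments the argument is complete.
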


\begin{proof}
For $\theta=1$, by theorem \ref{thm_IMdim}, we have $\ubd E=s^1$. Recall
$$
t^*=\inf\{t:\sum_{k=1}^\infty\sum_{\mathbf{u}\in\Sigma^k}\|D\varphi_\mathbf{u}\|^t<\infty\}.
$$
Then we may show $s^1\le t^*\le s^*\le s^1$. First we show $s^*\ge t^*$. For each non-integral $t>s^*$, we have
$\overline P(t)<0$. Hence there exists a $K_0>0$ such that for each $k>K_0$, 
$$
\sum_{\mathbf{u}\in\Sigma^k}\|D\varphi_\mathbf{u}\|^t<e^{k\frac{\overline P(t)}{2}},
$$
and it follows that
$$
\sum_{k=K_0}^\infty\sum_{\mathbf{u}\in\Sigma^k}\|D\varphi_\mathbf{u}\|^t<\sum_{k=K_0}^\infty e^{k\frac{\overline P(t)}{2}}<\infty.
$$
Then we have $t>t^*$, and $s^*\ge t^*$.

For every $t>t^*$, we have $\sum_{k=1}^\infty\sum_{\mathbf{u}\in\Sigma^k}\|D\varphi_\mathbf{u}\|^t<\infty,$
and it follows that
$$
\sum_{\mathbf{u}\in\mathcal{M}}\|D\varphi_\mathbf{u}\|^t\le\sum_{k=1}^\infty\sum_{\mathbf{u}\in\Sigma^k}\|D\varphi_\mathbf{u}\|^t<\infty,
$$
which means $\overline P(t, 1)\le 0$. Then we have $t>s^1$, and $t^*\ge s^1$.

For each $t'>t>s^1$, we have $\overline P(t, 1)<0$, and there exists a $\Delta>0$ such that for each $0<\delta<\Delta$,
$$
\sum_{\mathbf{u}\in\mathcal{M}(\delta)}\|D\varphi_\mathbf{u}\|^t<e^{k_\delta\frac{\overline P(t, 1)}{2}}<1.
$$                    
Choosing $\rho$ such that $c<\rho<1$ where $c$ is the uniform contraction constant given by \eqref{def_uccdn}, there exists an integer $k_2$ such that  $\rho^{k_2+1}<\Delta\le\rho^{k_2}$.
For each integer  $k>0$,  we write
$$
\mathcal{M}(\rho^k)=\{\bu\in \Sigma^*:    \|D\varphi_\mathbf{u}\|\le\rho^k <  \|D\varphi_{\mathbf{u}^*}\|\},
$$
Since  $c<\rho<1$, it is clear that   $\Sigma^*=\bigcup_{k=0}^\infty\mathcal{M}(\rho^k)$,  and   it follows that
\begin{eqnarray*}
\sum_{\mathbf{u}\in\Sigma^k}\|D\varphi_\mathbf{u}\|^{t'} &\le&\sum_{k=1}^\infty \sum_{\mathbf{u}\in\mathcal{M}(\rho^k)}\|D\varphi_\mathbf{u}\|^{t} \|D\varphi_\mathbf{u}\|^{t'-t}  \\
&\le&\sum_{k=1}^{k_2} \sum_{\mathbf{u}\in\mathcal{M}(\rho^k)}\|D\varphi_\mathbf{u}\|^t +\sum_{k=k_2}^\infty (C_1\rho^k)^{t'-t}  \\
&<&\infty,
\end{eqnarray*}
Hence, by \eqref{def_PFT2}, it is clear that $\overline P(t')\le 0$, and by \eqref{def_s*}, we have that  $t'>s^*$ and $s^1\ge s^*$. Therefore $s^*=s^1=\ubd E.$
\end{proof}

Recall that $J_\bu=\Psi_\bu(J)$ for each $\mathbf{u}\in\Sigma^*$.  We write
$$
E^\bu=\bigcap_{k=1}^\infty\bigcup_{\bv\in\Sigma_{|\bu|+1}^{|\bu|+k}}J_{\bu\bv},
$$
and it is clear that $E^\bu\subset E$. Note that $E^\bu$ may be regarded as the attractor of a certain non-autonomous iterated function system $\Phi^\bu$, and we denote $\overline P^\bu(t)$ as its corresponding upper topological pressures. In the following conclusion, we actually  show  that
$$
\ubd E^\bu= \ubd E.
$$
\begin{proof}[Proof of Theorem \ref{thm_PuBdim}]
For each open set $O\subset \mathbb{R}^d$ such that  $O\cap E\not=\emptyset$,  we choose $x\in O\cap E$, and by Lemma \ref{Sigma=E}, there exists $\bu\in\Sigma^\infty$ such that $x=\pi_\Phi(\mathbf{u}).$ Since $\lim_{k\to\infty}|J_{\mathbf{u}|_k}|=0$, there exists an integer  $k>0$ such that $J_{\mathbf{u}|_k}\subset O$, and   we write $\bu_k=\bu|_k\in\Sigma^k$. 
For each $\bv\in\Sigma^k$, we have
$$
\overline P^\bv(t)=\limsup_{n\to \infty} \frac{1}{n-k}\log \sum_{\mathbf{u}\in \Sigma^n_{k+1}}\|D\varphi_\mathbf{vu}\|^t.
$$
Since  $C^{-1}\|D\varphi_\mathbf{v}\|\|D\varphi_\mathbf{u}\|\le\|D\varphi_\mathbf{vu}\|\le \|D\varphi_\mathbf{v}\|\|D\varphi_\mathbf{u}\|,$
we have
$$
\overline P^\bv(t)=\limsup_{n\to \infty} \frac{1}{n}\log \sum_{\mathbf{u}\in \Sigma^n_{k+1}}\|D\varphi_\mathbf{u}\|^t.
$$
Let
$$
s_\bv^*=\inf\{t: \overline P^\bv(t)<0\}=\sup\{t: \overline P^\bv(t)>0\}.
$$
By Lemma \ref{box d}, it follows that $\ubd E^\mathbf{w}=s^*_\mathbf{w}=s^*_\bv=\ubd E^\mathbf{v}$ for each $\bv, \mathbf{w}\in\Sigma^k$, and by finite stability of box dimensions, we have
$$
\ubd E^\mathbf{v}=\ubd E^\mathbf{w}=\ubd E,
$$
for all $\mathbf{v}, \mathbf{w}\in\Sigma^k$.
Thus we have $ \ubd E^{\bu_k}=\ubd E, $ and this implies that $\ubd (E\cap O)=\ubd E $  for every open set $O\cap E\not=\emptyset$. By \cite[Corollary 3.10]{Fal}, it follows  that $\pkd E=\ubd E=s^*$.
\end{proof}

By Theorem \ref{thm_PuBdim},  we may relax the condition to \eqref{def_cmc} for the lower bound of the upper box dimensions.

\begin{prop}\label{lb packing}
Let  $E$ be the non-autonomous finite conformal set satisfying OSC and
$$
\lim_{k\to +\infty} \frac{\log\overline c_k-\log\#I_k}{\log M_k}=0.
$$
Then the box and packing dimension of $E$ bounded below $s^*$, that is
$$
\ubd E\ge\pkd E\ge s^*,
$$
\end{prop}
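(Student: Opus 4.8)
The plan is to derive the proposition from Theorem \ref{thm_PuBdim} by passing to a finite conformal subsystem that already satisfies the stronger condition \eqref{condition} and has the same critical value $s^*$, and then invoking monotonicity of the packing and upper box dimensions together with $\pkd E\le\ubd E$. If $s^*=0$ the asserted inequalities are trivial, so I would assume $s^*>0$ and fix $t_0\in(0,s^*)$.

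First I would check the hypothesis of Lemma \ref{subsystem}. Since $\Phi$ is conformal with bounded distortion (ii'), the argument of Lemma \ref{contraction} applies to every word $\bu\in\Sigma_l^k$ and yields $C_1^{-1}\|D\varphi_\bu\|\le\underline c_\bu\le\overline c_\bu\le C_1\|D\varphi_\bu\|$, hence $\overline c_\bu/\underline c_\bu\le C_1^2$, so $\Phi$ satisfies \eqref{cdn_cucl} with $D=C_1^2$. Applying Lemma \ref{subsystem} with this $t_0$ and with the sequence $\alpha_n=n$ (strictly increasing to $\infty$) produces a subsystem $\Phi'$, $I_n'\subseteq I_n$, which is again a NFCIFS satisfying OSC, with $\overline P'(t)=\overline P(t)$ and $\underline P'(t)=\underline P(t)$ for all $t\ge t_0$, and with $\max_{k,l\in I_n'}\overline c_{n,k}/\overline c_{n,l}\le n\,(\#I_n)^{1/t_0}$ for every $n$.

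The crux — the step I expect to be the main obstacle — is to verify that $\Phi'$ inherits condition \eqref{condition}; this is exactly where the weaker hypothesis \eqref{def_cmc} is used. Let $\underline c_n',\overline c_n',M_n'$ be the quantities of \eqref{Mk} for $\Phi'$. The construction of $I_n'$ in Lemma \ref{subsystem} retains an index realising $\sup_{j\in I_n}\|D\varphi_{n,j}\|=\overline c_n$, so $\overline c_n'$ is comparable to $\overline c_n$; together with the ratio bound above this gives $-\log\underline c_n'\le-\log\overline c_n+\log n+t_0^{-1}\log\#I_n+O(1)$. Since ${\Sigma'}^n\subseteq\Sigma^n$ we have $M_n'\le M_n$, hence $-\log M_n'\ge-\log M_n$; and $\|D\varphi_{k,j}(x)\|\le c<1$ by \eqref{def_uccdn} forces $M_n\le c^n$, so $-\log M_n\ge n\log(1/c)$ and thus $\log n=o(-\log M_n)$. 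Writing \eqref{def_cmc} as $\log\overline c_n-\log\#I_n=\epsilon_n\log M_n$ with $\epsilon_n\to0$, and noting $\log\overline c_n<0<\log\#I_n$ and $\log M_n<0$, one sees that both $-\log\overline c_n$ and $\log\#I_n$ are nonnegative and bounded by $\epsilon_n(-\log M_n)$, i.e.\ $-\log\overline c_n=o(-\log M_n)$ and $\log\#I_n=o(-\log M_n)$. Combining these,
$$
0\le\frac{-\log\underline c_n'}{-\log M_n'}\le\frac{-\log\overline c_n+\log n+t_0^{-1}\log\#I_n+O(1)}{-\log M_n}\longrightarrow 0 ,
$$
so $\lim_{n\to\infty}(\log\underline c_n')/(\log M_n')=0$, that is, $\Phi'$ satisfies \eqref{condition}.

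Finally I would conclude as follows. Put $E'=E(\Phi')\subseteq E$, and let ${s'}^*$ be the critical value from \eqref{def_s*} associated with $\Phi'$. Since $\overline P'(t)=\overline P(t)$ for all $t\ge t_0$ and $\overline P(t_0)>0$ (because $t_0<s^*$), the jump points coincide: ${s'}^*=s^*$. As $\Phi'$ is a NFCIFS satisfying \eqref{condition} and OSC, Theorem \ref{thm_PuBdim} gives $\pkd E'=\ubd E'={s'}^*=s^*$. Since $E'\subseteq E$ and $\pkd E\le\ubd E$ always, we obtain $\ubd E\ge\pkd E\ge\pkd E'=s^*$, which is the assertion.
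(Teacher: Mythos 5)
Your proposal is correct and follows essentially the same route as the paper: apply Lemma \ref{subsystem} with $\alpha_n=n$ to extract a finite conformal subsystem $\Phi'$ with the same upper pressure on $[t_0,\infty)$, use the ratio bound together with \eqref{def_cmc} and $M_n\le c^n$ to verify that $\Phi'$ satisfies \eqref{condition}, then invoke Theorem \ref{thm_PuBdim} for $E'$ and monotonicity of packing and upper box dimensions. Your explicit verification of \eqref{cdn_cucl} via Lemma \ref{contraction} and your choice $t_0\in(0,s^*)$ (rather than $t_0<s_*$) are minor refinements of the same argument.
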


\begin{proof}
Fix $t_0<s_*$, and let $\alpha_k=k$. By Lemma \ref{subsystem}, there exists a subsystem $\Phi'$ of $\Phi$ satisfying $\overline c_k=\overline c_k'$, $\overline P'(t)=\overline P(t)$ for all $t\ge t_0$ and for all $n$
$$
\max_{k,l\in I'_n}\frac{\|D\varphi_{n, k}\|}{\|D\varphi_{n, l}\|}\le\alpha_n(\#I_n)^\frac{1}{t_0}.
$$
Let $E'$ be the non-autonomous finite conformal set of $\Phi'$.

Since
$$
\frac{\overline c'_k}{\underline c'_k}=\max_{m,l\in I'_k}\frac{\|D\varphi_{k, m}\|}{\|D\varphi_{k, l}\|}\le\alpha_k(\#I_k)^\frac{1}{t_0},
$$
it follows that
$$
\frac{\log\underline c'_k}{\log M_k}\le\frac{\log\overline c'_k-\log\alpha_k-\log(\#I_k)^\frac{1}{t_0}}{\log M_k}.
$$
Since  $\|D\varphi_\bu\|\le c^k$ for each $\bu\in\Sigma^k$ where $c$ is uniform contraction constant given by \eqref{def_uccdn}, we have
$$
0\le\lim_{k\to\infty}\frac{-\log\alpha_k}{\log M_k}\le\lim_{k\to\infty}\frac{-\log\alpha_k}{k\log c}=0.
$$
Note that  $M_k'\leq M_k<1$, and it follows that
\begin{eqnarray*}
0&\le&\lim_{k\to\infty}\frac{\log\underline c'_k}{\log M'_k}  \le\lim_{k\to\infty}\frac{\log\overline c_k-\log\alpha_k-\log(\#I_k)^\frac{1}{t_0}}{\log M_k}
=0.
\end{eqnarray*}
By Theorem \ref{thm_PuBdim}, $\pkd E'={s^*}'$. Note that $\underline P'(t)=\underline P(t)$ for all $t\ge t_0$, that is ${s^*}'=s^*$. Then we have $\ubd E\ge\pkd E\ge\pkd E'= s^*$.
\end{proof}

\section{Hausdorff dimension of non-autonomous finite conformal set}\label{sec_HF}
In this section, we first study the Hausdorff dimension of non-autonomous finite conformal sets under  $\mathcal{L}^d(\partial J)=0$ and \eqref{condition}.
Then we  further relax the latter condition to
$$
\lim_{k\to +\infty} \frac{\log\overline c_k-\log\#I_k}{\log M_k}=0.
$$
where $M_k, \underline c_k$ and $\overline c_k$ are given by \eqref{Mk}.

Lemma \ref{ub hausdorff} shows that $s_*$ given by \eqref{s_*}  is always an  upper bound to the Hausdorff dimensions of non-autonomous finite conformal sets.
Next, we show that $s_*$ actually gives the Hausdorff dimensions of non-autonomous finite conformal sets under the condition  $\mathcal{L}^d(\partial J)=0$ and
$ \lim_{k\to +\infty} \frac{\log\underline c_k}{\log M_k}=0.$
\begin{proof}[Proof of Theorem \ref{thm_Hdim}]
By Lemma \ref{ub hausdorff}, it is sufficient to prove $\hdd E\ge s_*$.

For each  $t'< t <s_*$, we have $\underline P(t)>0$, and  there exists $K_1$ such that
\begin{equation}\label{dengbi}
\sum_{\mathbf{u}\in\Sigma^k}\|D\varphi_\mathbf{u}\|^t>e^{\frac{\underline P(t)}{2}k},
\end{equation}
 for all $k>K_1$. Since $\mbox{int}(J_\bu)\cap  \mbox{int}(J_\bv)=\emptyset$ and $\mathcal{L}^d(\partial J)=0$, we have that $\mathcal{L}^d(J_\bu\cap J_\bv)=0$ for all $\bu, \bv\in\Sigma^*$ with $|\bu|=|\bv|$.
Since $\lim_{k\to +\infty} \frac{\log\underline c_k}{\log M_k}=0, $ there exists an integer  $K_0>0$ such that  for all  $k>K_0$,
\begin{equation}\label{1MC<1}
M_k^{t-t'} <\underline{c}_k^{d}.
\end{equation}

Next,  we  construct a  sequence of probability measures $\{\mu_n\}_{n=1}^\infty$ on $J$. For each $n>0$, Let $\mu_n$ be a uniformly distributed measure given by
\begin{equation}\label{def_msu}
\mu_n(J_\mathbf{u})=\frac{\|D\varphi_\mathbf{u}\|^t}{\sum_{\mathbf{u}\in\Sigma^n}\|D\varphi_\mathbf{u}\|^t},
\end{equation}
for all  $J_\bu, \bu\in\Sigma^n$. For every $0< m< n$  and every  $\bu\in\Sigma^m$, we have that
$$
\mu_n(J_\mathbf{u}) =    \sum_{\mathbf{v}\in\Sigma^{n}_{m+1}}\mu_n(J_{\mathbf{u}\mathbf{v}}) = \sum_{\mathbf{v}\in\Sigma^{n}_{m+1}}\frac{\|D\varphi_{\mathbf{u}\bv}\|^t }{\sum_{\mathbf{u}\in\Sigma^n}\|D\varphi_\mathbf{u}\|^t}.
$$
By Lemma \ref{cor_subMul}, it follows that
\begin{eqnarray*}
\mu_n(J_\mathbf{u}) &\le&\frac{\|D\varphi_{\mathbf{u}}\|^t\sum_{\mathbf{v}\in\Sigma^{n}_{m+1}}\|D\varphi_\mathbf{v}\|^t }{C^{-2t}\sum_{\mathbf{u}\in\Sigma^{m}}\|D\varphi_\mathbf{u}\|^t\sum_{\mathbf{v}\in\Sigma^{n}_{m+1}}\|D\varphi_\mathbf{v}\|^t}  =\frac{C^{2t}\|D\varphi_{\mathbf{u}}\|^t}{\sum_{\mathbf{u}\in\Sigma^m}\|D\varphi_\mathbf{u}\|^t}.
\end{eqnarray*}
For every  $B(x, r)\subset\mathbb{R}^d$, by \eqref{def_Aset}, we have
$$
\mu_n(B(x,r))\le\mu_n(\bigcup_{\mathbf{u}\in A(B(x,r))}J_\mathbf{u})  =   \sum_{\mathbf{u}\in A(B(x,r))} \mu_n(J_\mathbf{u}).
$$
By \eqref{def_k0} and \eqref{def_DFk}, letting $n>\max\{K_1, K_0, k^+_{B(x,r)}\}$, it implies  that
\begin{equation}\label{mu equality}
\mu_n(B(x,r)\le\sum_{k=k_{B(x,r)}^-}^{k_{B(x,r)}^+}\sum_{\mathbf{u}\in D(B(x,r), k)}\frac{\|D\varphi_\mathbf{u}\|^t C^{2t}}{\sum_{\bu\in\Sigma^k}\|D\varphi_\bu\|^t,}.
\end{equation}
Combining  \eqref{dengbi}, \eqref{1MC<1} with Lemma \ref{contraction}, it follows that
\begin{eqnarray*}
\sum_{\mathbf{u}\in D(B(x,r), k)}\frac{\|D\varphi_\mathbf{u}\|^t C^{2t}}{\sum_{\bu\in\Sigma^k}\|D\varphi_\bu\|^t,}&\le&\sum_{\mathbf{u}\in D(B(x,r), k)}\frac{(C_1|J_\mathbf{u}|)^{t'}M_k^{t-t'} C^{2t}}{\sum_{\bu\in\Sigma^k}\|D\varphi_\bu\|^t,} \\
&\le&e^{\frac{-\underline P(t)}{2}}(2rC_1)^{t'}C^{2t}\#D(B(x,r),k)\underline c_k^{d}  \\
&=&C_3r^{t'}\#D(B(x,r),k)\underline c_k^{d},
\end{eqnarray*}
where $C_3=(2C_1)^{t'}C^{2t}e^{\frac{-\underline P(t)}{2}}$ is a constant.
By Lemma \ref{finite intersection}, we have
\begin{equation}\label{mu<rt}
\mu_n(B(x,r))\le\sum_{k=k_{B(x,r)}^-}^{k_{B(x,r)}^+}C_3r^{t'}\#D(B(x,r),k)\underline c_k^{d} \le
C_2C_3r^{t'}.
\end{equation}

Since $\{\mu_n\}$ is bounded on $J$, there exists a subsequence $\{\mu_{n_k}\}$ converging weakly to a measure $\mu$; see \cite{BP}. Hence,   taking limit on both sides of \eqref{mu<rt}, we obtain that
$$
\mu(B(x,r)\le\liminf_{n\to\infty}\mu_n(B(x,r))\le C_2C_3 r^{t'}.
$$
By the  mass distribution principle  \cite{Fal}, this implies  that $\hdd E\ge t'$ for all $t'<s_*$, and we have $\hdd E\ge s_*$, which completes the proof.
\end{proof}

Next, we show that $s_*$ still gives the Hausdoff dimension under \eqref{def_cmc} by applying Theorem \ref{thm_Hdim} and approximation of subsystems.

\begin{proof}[Proof of Corollary \ref{thm_Hdim g}]
By Lemma \ref{ub hausdorff}, if $s_*=0$, the conclusion holds.
If $s_*>0$, by the similar argument to Proposition \ref{lb packing}, there exists a subsystem $\Phi'$ of $\Phi$ such that $\underline P'(t)=\underline P(t)$ for all $t\ge t_0$, and  $E'$ is the non-autonomous finite conformal set of $\Phi'$ satisfying  $\lim_{k\to\infty}\frac{\log\underline c'_k}{\log M'_k}=0.$
By Theorem \ref{thm_Hdim}, $\hdd E'=s_*'$. Note that $\underline P'(t)=\underline P(t)$ for all $t\ge t_0$, that is $s_*'=s_*$. Since $\hdd E'\le\hdd E\le s_*$, we have $\hdd E=s_*$.
\end{proof}

Given a set $J\subset \mathbb{R}^d$, we say $J$ satisfies \emph{cone condition} if there exists $\alpha,l>0$ such that for every $x\in\partial J\subset \mathbb{R}^d$, there exists an open cone $\mbox{Con}(x, u_x, \alpha, l)\subset\mbox{int}(J)$ with vertex $x$, direction vector $u_x$, central angle of Lebesgue measure $\alpha$ and altitude $l$. Given a non-autonomous finite or infinite conformal iterated function system $\Phi$, we say $\Phi$ satisfies \emph{cone condition} if the initial set $J$ in the definition of $\Phi$ satisfies \emph{cone condition}.  See \cite{DM} for details. It is a standard fact that  the cone condition implies   $\mathcal{L}^d(\partial J)=0$, and we include a proof for the convenience of readers.

\begin{lem}\label{cone}
Given $\Phi$ satisfying  cone condition and OSC. For every $x\in J$, $r>0$, Let $\{\bu_\lambda\}_{\lambda\in\Lambda}\subset \Sigma^*$ satisfy that $|J_{\bu_\lambda}|>r$ and $
J_{\bu_\lambda}\cap B(x,r)\not=\emptyset$ for $\lambda\in\Lambda$, and that $\Psi_{\bu_{\lambda_1}}(\mbox{int}(J))\cap \Psi_{\bu_{\lambda_2}}(\mbox{int}(J))=\emptyset $ for $ \lambda_1\not=\lambda_2$. Then 
$$
\# \Lambda\leq M_1,
$$
where $M_1$ is a constant independent of $x$ and $r$. 
\end{lem}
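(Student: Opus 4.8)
The plan is to exploit the volume of the cones that the cone condition places inside each $J_{\bu_\lambda}$, together with the bounded-distortion estimates of Lemma \ref{contraction} and Lemma \ref{vol}, to bound $\#\Lambda$. First I would observe that since $|J_{\bu_\lambda}|>r$ and $J_{\bu_\lambda}\cap B(x,r)\neq\emptyset$, each $J_{\bu_\lambda}$ is contained in some fixed ball $B(x,Mr)$ whose radius is a constant multiple of $r$ (the constant coming from the comparison $|J_{\bu_\lambda}|\le C_1\|D\varphi_{\bu_\lambda}\|$ and the uniform contraction, but in fact just from $|J_{\bu_\lambda}|\le\diam(J)\cdot\|D\varphi_{\bu_\lambda}\|/\text{(lower bound)}$; more simply, $J_{\bu_\lambda}\subset B(x,r+|J_{\bu_\lambda}|)$ and we need an upper bound on $|J_{\bu_\lambda}|$). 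To get the upper bound on $|J_{\bu_\lambda}|$, note that if $J_{\bu_\lambda}$ meets $B(x,r)$ and has diameter much larger than $r$, we instead pass to a prefix: since the pieces shrink continuously along a word (by uniform contraction, $\|D\varphi_{\bu^*}\|\le C c^{-1}\|D\varphi_{\bu}\|$-type control via Lemma \ref{cor_subMul}), we may replace each $\bu_\lambda$ by the shortest prefix $\bv_\lambda$ with $|J_{\bv_\lambda}|\le \Lambda_0 r$ for a suitable constant $\Lambda_0$; distinct $\bu_\lambda$ need not give distinct $\bv_\lambda$, but each $\bv_\lambda$ has boundedly many descendants at the relevant generation, so a uniform bound on the number of such prefixes suffices. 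Actually the cleaner route is: keep the $\bu_\lambda$ as given, use only that $r<|J_{\bu_\lambda}|$, and do \emph{not} need an upper bound — because the cones are what we count, and the ambient ball can be taken adaptively.

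Here are the key steps in order. (1) By the cone condition applied to the compact set $J$, fix $\alpha,l>0$ so that every boundary point of $J$ is the apex of a cone of angular measure $\alpha$ and height $l$ inside $\mathrm{int}(J)$; consequently $\mathcal{L}^d(\mathrm{int}(J))>0$, and more importantly there is a constant $\kappa=\kappa(\alpha,l,d,\diam J)>0$ such that $J$ contains a ball $B(y_0,\kappa\diam J)$ (take $y_0$ the midpoint of the cone axis at any fixed boundary point). (2) Apply $\Psi_{\bu_\lambda}$: by Lemma \ref{contraction}, $\Psi_{\bu_\lambda}(J)=J_{\bu_\lambda}$ contains a set comparable to a ball of radius $\gtrsim \kappa\,\|D\varphi_{\bu_\lambda}\|\,\diam J \gtrsim \kappa C_1^{-1} |J_{\bu_\lambda}| > \kappa C_1^{-1} r$; in particular, using Lemma \ref{vol}, $\mathcal{L}^d(J_{\bu_\lambda})\ge C^{-d}\|D\varphi_{\bu_\lambda}\|^d\,\mathcal{L}^d(\mathrm{int}J)\ge c_0\, r^d$ for a constant $c_0>0$ independent of $x,r,\lambda$. (3) Since the $\Psi_{\bu_\lambda}(\mathrm{int}J)$ are pairwise disjoint (given hypothesis) and each $J_{\bu_\lambda}$ lies in $B(x, (1+\Lambda_0)r)$ for the ambient-ball constant $\Lambda_0$ from the previous paragraph, we get
$$
\#\Lambda\cdot c_0\, r^d \;\le\; \sum_{\lambda\in\Lambda}\mathcal{L}^d\big(\mathrm{int}(J_{\bu_\lambda})\big)\;\le\;\mathcal{L}^d\big(B(x,(1+\Lambda_0)r)\big)\;=\;(1+\Lambda_0)^d\,\mathcal{L}^d(B(0,1))\,r^d,
$$
hence $\#\Lambda\le M_1:=(1+\Lambda_0)^d\mathcal{L}^d(B(0,1))/c_0$, which is independent of $x$ and $r$.

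The main obstacle is Step (3)'s ambient-ball control, i.e. ruling out the possibility that some $J_{\bu_\lambda}$ meeting $B(x,r)$ is enormously larger than $r$, so that no fixed multiple of $r$ contains it. This is handled by the prefix-truncation trick sketched above: replace each overly large $J_{\bu_\lambda}$ by the unique ancestor $J_{\bv_\lambda}$ of diameter in the window $(c\,\Lambda_0 r,\,\Lambda_0 r]$ (possible by uniform contraction, Lemma \ref{cor_subMul}), note these ancestors are still contained in $B(x,(1+\Lambda_0)r)$ and still have pairwise-disjoint interiors (ancestors of disjoint-interior sets either coincide or are nested, and nesting is excluded because both have comparable size $\asymp r$ — if $\bv_{\lambda_1}\prec\bv_{\lambda_2}$ then $|J_{\bv_{\lambda_2}}|\le c|J_{\bv_{\lambda_1}}|< |J_{\bv_{\lambda_1}}|$ forces them into different size windows unless equal), so the volume-packing argument of Step (3) bounds the number of \emph{distinct} ancestors by $M_1$; finally each ancestor has at most one $\bu_\lambda$ among our family refining it in the size window, because distinct $\bu_\lambda$ refining the same $J_{\bv}$ at the same generation already have disjoint interiors and the same counting gives a further uniform bound, which we absorb into $M_1$. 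A careful writeup needs to confirm the disjoint-interior property is inherited/compatible under taking prefixes and that the generation gap $|\bu_\lambda|-|\bv_\lambda|$ is uniformly bounded, but both follow directly from the uniform contraction ratio $c<1$ and the bounded-distortion constant.
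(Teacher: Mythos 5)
Your overall strategy (pairwise disjoint interiors, a definite amount of $d$-dimensional volume per piece, packed into a ball of radius $O(r)$ around $x$) is the same volume-counting argument the paper uses, but the step where you localize that volume near $x$ has a genuine gap, and your proposed fix does not work. The problem, as you correctly identify, is that $|J_{\bu_\lambda}|$ may be enormously larger than $r$. Your Step (2) produces a ball of radius comparable to $\|D\varphi_{\bu_\lambda}\|\asymp|J_{\bu_\lambda}|$ sitting at $\Psi_{\bu_\lambda}(y_0)$, which can be at distance up to $\diam(J_{\bu_\lambda})\gg r$ from $x$; so the sets whose volumes you sum in Step (3) need not lie in any ball $B(x,(1+\Lambda_0)r)$, and the packing inequality fails. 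Your proposed repair --- replacing $\bu_\lambda$ by ``the shortest prefix $\bv_\lambda$ with $|J_{\bv_\lambda}|\le\Lambda_0 r$'' --- goes in the wrong direction: a prefix $\bv\prec\bu$ is an \emph{ancestor}, and $J_{\bv}\supset J_{\bu}$, so every prefix of an overly large $J_{\bu_\lambda}$ is at least as large. There is no prefix of controlled small diameter. Passing instead to descendants does not obviously help either, since the level-$n$ pieces inside $J_{\bu_\lambda}$ need not cover $J_{\bu_\lambda}\cap B(x,r)$, so you cannot guarantee a descendant of size $\asymp r$ meeting $B(x,r)$.

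The ingredient you are missing is that the cone condition is inherited by the images with uniform parameters and, crucially, with apex at an \emph{arbitrary} point of $J_{\bu_\lambda}$: by \cite[(2.10)]{DM} there are constants $D>1$ and $0<\beta\le\alpha$ such that $\mbox{Con}(\Psi_{\bu}(y),\beta,D^{-1}\|D\varphi_{\bu}\|)\subset\Psi_{\bu}(\mbox{int}(J))$ for all $y\in J$ and all $\bu\in\Sigma^*$. Choosing $y$ with $\Psi_{\bu_\lambda}(y)\in J_{\bu_\lambda}\cap B(x,r)$ and noting $D^{-1}\|D\varphi_{\bu_\lambda}\|\ge (C_1D)^{-1}|J_{\bu_\lambda}|>(C_1D)^{-1}r$, one truncates this cone to altitude $(C_1D)^{-1}r$. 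The truncated cones $\mbox{Con}_\lambda$ are pairwise disjoint (they sit in the disjoint interiors), each has volume $\beta((C_1D)^{-1}r)^d$, and each lies in $B(x,(1+(C_1D)^{-1})r)$ because its apex is in $B(x,r)$; comparing volumes gives $\#\Lambda\le M_1$ with $M_1$ depending only on $\beta$, $C_1$, $D$ and $d$. This is exactly how the paper localizes the volume near $x$ without ever bounding $|J_{\bu_\lambda}|$ from above; without this (or an equivalent device) your argument does not close.
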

\begin{proof}
By \cite[(2.10)]{DM}, there exists a constant $D>1$ and $0<\beta\le\alpha$ such that for all $x\in J$ and for all $\bu\in\Sigma^*$, by \eqref{basic set},   it follows that
$$
\mbox{Con}(\Psi_\bu(x), \beta, D^{-1}\|D\varphi_\bu\|)\subset\Psi_\bu(\mbox{int}(J)).
$$
Since $|J_{\bu_\lambda}|>r$, it is clear that $\{\bu_\lambda\}_{\lambda\in \Lambda}$ is finite, i.e. $\# \Lambda <\infty$. For each $\lambda\in\Lambda$, since $B(x, r)\cap\Psi_{\bu_\lambda}(J)\not=\emptyset$, by the  bounded distortion,   there exists $\mbox{Con}_\lambda\subset  \Psi_{\bu_\lambda}(\mbox{int}(J))$ with  central angle of Lebesgue measure $\beta$ and altitude $(C_1D)^{-1}r$ such that  $\mbox{Con}_\lambda\subset B(x,(1+(C_1D)^{-1}r)$, and  $\mbox{Con}_\lambda\cap \mbox{Con}_{\lambda'}=\emptyset$  for all $\lambda\neq \lambda'$.  It follows that
$$
\# \Lambda\beta ((C_1D)^{-1}r)^d =\# \Lambda\mathcal{L}^d(\mbox{Con}_\lambda)   = \sum_{\lambda\in\Lambda} \mathcal{L}^d(\mbox{Con}_\lambda)  \le \mathcal{L}^d(B(x,(r+(C_1D)^{-1}r)).
$$
Let $M_1=\frac{(1+(C_1D)^{-1})^d}{\beta ((C_1D)^{-1})^d}$,  and the conclusion holds.  
\end{proof}

\begin{lem}\label{c L=0}
Let $J\subset\mathbb{R}^d$ be a compact set satisfying cone condition. Then we have $\mathcal{L}^d(\partial J)=0$.
\end{lem}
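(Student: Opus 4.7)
The plan is to show that the cone condition forces the upper Lebesgue density of $\partial J$ at each of its points to be uniformly bounded away from $1$, then invoke the Lebesgue density theorem to conclude $\mathcal{L}^d(\partial J)=0$.

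First I would fix $x\in\partial J$ and use the cone condition to produce an open cone $\mathrm{Con}(x,u_x,\alpha,l)\subset \mathrm{int}(J)$ with vertex $x$, fixed angular aperture $\alpha>0$ and altitude $l>0$ that do not depend on $x$. For every $0<r<l$, the sub-cone $\mathrm{Con}(x,u_x,\alpha,r)$ of altitude $r$ is contained in both $B(x,r)$ and $\mathrm{int}(J)$, and its Lebesgue measure is $c_d\,\alpha\,r^d$ for a dimensional constant $c_d>0$. Since $\mathrm{Con}(x,u_x,\alpha,r)\cap\partial J=\emptyset$, it follows that
\begin{equation*}
\mathcal{L}^d\bigl(\partial J\cap B(x,r)\bigr)\le \mathcal{L}^d(B(x,r))-c_d\alpha r^d=(\omega_d-c_d\alpha)\,r^d,
\end{equation*}
where $\omega_d=\mathcal{L}^d(B(0,1))$. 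Consequently the upper density satisfies
\begin{equation*}
\limsup_{r\to 0}\frac{\mathcal{L}^d(\partial J\cap B(x,r))}{\mathcal{L}^d(B(x,r))}\le 1-\frac{c_d\alpha}{\omega_d}<1,
\end{equation*}
uniformly for all $x\in\partial J$.

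Suppose for contradiction that $\mathcal{L}^d(\partial J)>0$. Since $J$ is compact, $\partial J$ is a bounded measurable set, and the Lebesgue density theorem yields that $\mathcal{L}^d$-a.e.\ $x\in\partial J$ is a point of density $1$ for $\partial J$, i.e.\ the above $\limsup$ equals $1$ at such $x$. This contradicts the uniform upper bound $1-c_d\alpha/\omega_d<1$ derived from the cone condition. Therefore $\mathcal{L}^d(\partial J)=0$.

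The argument is essentially routine once the cone is exhibited; the only step that requires a little care is verifying that the sub-cone of altitude $r$ truly fits inside $B(x,r)$ and has volume proportional to $r^d$ with a constant independent of $x$, which is immediate from the definition since the aperture $\alpha$ and direction $u_x$ vary but the shape of the cone is similar at every scale.
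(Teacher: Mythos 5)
Your proof is correct, and it takes a genuinely different and considerably cleaner route than the paper. The paper's proof proceeds by an explicit countable decomposition: it fixes a countable dense set $Q$ of rational points in a hyperplane, partitions $\partial J$ into pieces $A_{q,\pm}^i$ according to (an approximation of) the cone direction $u_x$ and the radial distance to $q$, and then squeezes each piece between two nested dilates of cone-unions to get a measure estimate summing to $O(\epsilon)$. Your argument instead observes that the cone condition forces the upper Lebesgue density of $\partial J$ at every boundary point to be uniformly bounded below $1$ (since $B(x,r)$ always contains a truncated cone of measure $\asymp r^d$ lying in $\mathrm{int}(J)$, hence disjoint from $\partial J$), and then applies the Lebesgue density theorem: if $\mathcal{L}^d(\partial J)>0$, almost every point of $\partial J$ would be a density point, contradiction. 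The density-theorem route is shorter, avoids any bookkeeping over rational directions, and makes the geometric content transparent.

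One small point worth tightening: whether the cone of altitude $r$ literally sits inside $B(x,r)$ depends on the convention for "altitude" (if altitude is measured along the axis, boundary points of the base lie at Euclidean distance $r/\cos\theta>r$ from the vertex). This is harmless: simply replace $\mathrm{Con}(x,u_x,\alpha,r)$ by $\mathrm{Con}(x,u_x,\alpha,l)\cap B(x,r)$, which for $r\le l$ still has Lebesgue measure bounded below by $c(\alpha,d)\,r^d$ with $c(\alpha,d)>0$ independent of $x$, and your density bound goes through unchanged. With that fix the argument is complete.
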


\begin{proof}
Let $Q=\{x_1, \ldots, x_{d-1}, 0)\in\mathbb{R}^d:x_i\in\mathbb{Q}, 1\le i\le d-1\}$. For each  $x\in\partial J\subset \mathbb{R}^d$, by cone condition, there exists an open cone $\mbox{Con}(x, u_x, \alpha, l)\subset\mbox{int}(J)$ with vertex $x$, direction vector $u_x$, central angle of Lebesgue measure $\alpha$ and altitude $l$. Since  $\mathbb{Q}$  is   dense in $\mathbb{R}$,   there exists another open cone $\mbox{Con}(x, u_x', \frac{\alpha}{2}, \frac{l}{2})\subset\mbox{int}(J)$ such that the  line passing through the point $x$ with the direction of $u_x'$ intersects $Q$.             For each $x\in\partial J$, let $l_x$ be the straight line passing through the point $x$ with the direction of $u_x'$.  

Let $k=[\frac{2|J|}{l}]+1$. Fix $q\in Q$. For each  $1\le i\le k$, let
\[
\begin{split}
A_{q, +}^i &=\{x\in\partial J:u_x'=\frac{x-q}{|x-q|}, (i-1)\frac{l}{2}\le|x-q|<i\frac{l}{2}\}, \\
A_{q, -}^i &=\{x\in\partial J:u_x'=\frac{q-x}{|x-q|}, (i-1)\frac{l}{2}\le|x-q|<i\frac{l}{2}\}.
\end{split}
\]
For $x\in A_{q, +}^i$, let
\[
\begin{split}
B_{q, +}^i(x) &=\mbox{Con}(x, u_x', \frac{\alpha}{2}, \frac{l}{2})\bigcup\mbox{Con}(q, -u_x', |x-q|-\frac{l}{2}), \\
B_{q, -}^i(x) &=\mbox{Con}(x, u_x', \frac{\alpha}{2}, \frac{l}{2})\bigcup\mbox{Con}(q, u_x', |x-q|+\frac{l}{2}),
\end{split}
\]
where  $\mbox{Con}(q, u_x', |x-q|+\frac{l}{2})$  and $\mbox{Con}(q, -u_x', |x-q|-\frac{l}{2})$ have the same base as $\mbox{Con}(x, u_x', \frac{\alpha}{2}, \frac{l}{2})$.

Since $Q$ is countable,  we  rewrite it as $Q=\{q_j\}_{j=1}^\infty$. For each $j\in\mathbb{N}$, let $B_{q_j, +}^i=\bigcup_{x\in A_{q_j, +}^i}B_{q_j, +}^i(x).$ For $0<\epsilon<\frac{l}{2|J|}$, we have
$$
A_{q_j, +}^i-q_j\subset(\mbox{int}((1+\frac{\epsilon}{2^j})(B_{q_j, +}^i-q_j))-\mbox{int}(B_{q_j, +}^i-q_j)),
$$
and it follows that
\begin{equation} \label{msAq1}
\mathcal{L}^d(A_{q_j, +}^i)\le((1+\frac{\epsilon}{2^j})^d-1)\mathcal{L}^d(\mbox{int}B_{q_j, +}^i)\le((1+\frac{\epsilon}{2^j})^d-1)\mathcal{L}^d(B(0,|J|+1)).
\end{equation}
Similarly,  we have that
\begin{equation}\label{msAq2}
\mathcal{L}^d(A_{q_j, -}^i)\le(1-(1-\frac{\epsilon}{2^j})^d)\mathcal{L}^d(\mbox{int}B_{q_j, -}^i)\le(1-(1-\frac{\epsilon}{2^j})^d)\mathcal{L}^d(B(0,|J|+1)).
\end{equation}
Since
$$
\partial J\subset\bigcup_{j=1}^\infty \bigcup_{i=1}^k(A_{q_j, +}^i\cup A_{q_j, -}^i),
$$
it  implies that
$$
\mathcal{L}^d(\partial J)\le\sum_{j=1}^\infty\sum_{i=1}^k(\mathcal{L}^d((A_{q_j, +}^i)+\mathcal{L}^d((A_{q_j, -}^i))  \le\sum_{j=1}^\infty k(\mathcal{L}^d((A_{q_j, +}^i)+\mathcal{L}^d((A_{q_j, -}^i)),
$$
and combining it with \eqref{msAq1} and \eqref{msAq2}, we obtain that
\begin{eqnarray*}
\mathcal{L}^d(\partial J) &\le&k\mathcal{L}^d(B(0,|J|+1))\sum_{j=1}^\infty4d\frac{\epsilon}{2^j}  \leq C_4\epsilon,
\end{eqnarray*}
where $C_4$ is  a constant. The conclusion follows by the arbitrariness of $\epsilon$.
\end{proof}

In \cite{RM}, Rempe-Gillen and Urbański showed if  a non-autonomous finite conformal set satisfies
 cone condition  and $\lim_{k\to\infty}\frac{\log \#I_k}{k}=0$, then its Hausdorff dimension  is equal to $s_*$.
We slightly generalise their conclusion into the following form.
\begin{cor}\label{mphi}
Given NFCIFS $\Phi$ satisfying cone condition and OSC. Suppose  $m_\Phi=\inf\{\underline P(t): \underline P(t)>0\}>0$.
If
\begin{equation} \label{cdn_Imk}
\limsup_{k\to +\infty} \frac{\log\#I_k}{k \cdot m_\Phi }<1,
\end{equation}
then $\hdd E=s_*,$ where $s_*$ is given by \eqref{def_s*}.
\end{cor}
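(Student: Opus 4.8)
Since $\hdd E\le s_*$ by Lemma~\ref{ub hausdorff} and the cone condition gives $\mathcal L^d(\partial J)=0$ by Lemma~\ref{c L=0}, it suffices to prove $\hdd E\ge s_*$, and we may assume $s_*>0$. The plan is to run the mass–distribution construction from the proof of Theorem~\ref{thm_Hdim}, but to replace the covering estimate which there relied on condition~\eqref{condition} (through \eqref{1MC<1} and Lemma~\ref{finite intersection}) by one based on the cone condition together with the branching bound~\eqref{cdn_Imk}.

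Fix $t'<t<s_*$. Since $\underline P$ is strictly decreasing (Corollary~\ref{ddx}) and $s_*=\sup\{s:\underline P(s)>0\}$, we have $\underline P(t)>0$, hence $\underline P(t)\ge m_\Phi$ by the definition of $m_\Phi$; using \eqref{cdn_Imk} pick $\varepsilon,\varepsilon'>0$ with $\varepsilon'<\varepsilon m_\Phi$, so that $\#I_k\le e^{(1-\varepsilon)m_\Phi k}$ and $\sum_{\bu\in\Sigma^k}\|D\varphi_\bu\|^t\ge e^{(m_\Phi-\varepsilon')k}$ for all large $k$. Let $\mu_n$ be the uniformly distributed measures on $E$ with exponent $t$ given by~\eqref{def_msu} and let $\mu$ be a weak limit of a subsequence. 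Exactly as in the proof of Theorem~\ref{thm_Hdim}, for a ball $B(x,r)$ with $r<1$ small and $n$ large one gets
\[
\mu_n(B(x,r))\ \le\ C^{2t}\sum_{k=k^-_{B(x,r)}}^{k^+_{B(x,r)}}\ \sum_{\bu\in D(B(x,r),k)}\frac{\|D\varphi_\bu\|^t}{\sum_{\bv\in\Sigma^k}\|D\varphi_\bv\|^t}\,.
\]
For $\bu\in D(B(x,r),k)$ one has $\|D\varphi_\bu\|\le 2C_1r$ by Lemma~\ref{contraction}; moreover the parents $\bu^*$ occurring here are distinct words in $\Sigma^{k-1}$, hence form an antichain and have pairwise disjoint interiors by OSC, they meet $B(x,r)$, and $|J_{\bu^*}|>2r$, so Lemma~\ref{cone} bounds their number by the constant $M_1$ and therefore $\#D(B(x,r),k)\le M_1\#I_k$. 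Inserting the lower bound for the partition function and then~\eqref{cdn_Imk},
\[
\mu_n(B(x,r))\ \le\ C^{2t}(2C_1)^tM_1\,r^{t}\sum_{k\ge 1}\#I_k\,e^{-(m_\Phi-\varepsilon')k}\ \le\ C'r^{t}\sum_{k\ge 1}e^{(\varepsilon'-\varepsilon m_\Phi)k}\ =\ C''r^{t}\ \le\ C''r^{t'},
\]
the geometric series converging because $\varepsilon'<\varepsilon m_\Phi$, and $r^{t}\le r^{t'}$ since $r<1$ and $t>t'$. Passing to the weak limit gives $\mu(B(x,r))\le C''r^{t'}$ for all small $r$, so the mass distribution principle yields $\hdd E\ge t'$; as $t'<s_*$ was arbitrary, $\hdd E\ge s_*$, and combined with the upper bound $\hdd E=s_*$.

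The crux—and the step I expect to be the main obstacle—is recognising that the cone condition must be exploited through Lemma~\ref{cone}, which gives the clean count $\#D(B(x,r),k)\le M_1\#I_k$ rather than the bound $\#D(B(x,r),k)\lesssim\underline c_k^{-d}$ coming from Lemma~\ref{finite intersection} (useless when the level ratios $\underline c_k$ are tiny); once that is available, the positivity of the pressure gap $m_\Phi$ and the subexponential branching bound~\eqref{cdn_Imk} combine into a single absolutely convergent series independent of the ball. Two degenerate situations need a separate (easy) word: if $s_*=0$ the statement is trivial; and if $m_\Phi=+\infty$ (equivalently $\underline P(t)=+\infty$ for every $t<s_*$) then~\eqref{cdn_Imk} carries no information, but in that case $\sum_{\bv\in\Sigma^k}\|D\varphi_\bv\|^t$ grows faster than any exponential for $t<s_*$, which—together with the OSC packing inequality $\sum_{i\in I_k}\|D\varphi_{k,i}\|^d\lesssim 1$ controlling $\#I_k$ against that growth—again makes $\sum_k\#I_k\big/\sum_{\bv\in\Sigma^k}\|D\varphi_\bv\|^t$ finite, so the same estimate goes through.
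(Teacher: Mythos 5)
Your proposal is correct and follows essentially the same route as the paper's proof: bound $\#D(B(x,r),k)\le M_1\#I_k$ via Lemma~\ref{cone} applied to the distinct parents $\bu^*$, feed the lower bound $\sum_{\bu\in\Sigma^k}\|D\varphi_\bu\|^t\gtrsim e^{(m_\Phi-\varepsilon')k}$ and the branching bound \eqref{cdn_Imk} into the measures \eqref{def_msu}, and conclude by a convergent geometric series and the mass distribution principle. Your $\varepsilon,\varepsilon'$ bookkeeping and the explicit treatment of the degenerate cases are minor cosmetic differences from the paper's single constant $M\in(0,1)$.
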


\begin{proof}
By Lemma \ref{ub hausdorff}, it is sufficient to prove $\hdd E\ge s_*$. For each  $t<s_*$, we have $\underline P(t)>0$.
By  \eqref{cdn_Imk},   there exist two constants  $M\in(0, 1)$ and  $K>0$ such that for all $k>K$,
\begin{equation}\label{1MC<11}
\#I_k<e^{Mm_\Phi  \cdot k}  \quad \textit{and } \quad
\sum_{\mathbf{u}\in\Sigma^k}\|D\varphi_\mathbf{u}\|^t>e^{M\underline P(t)k}.
\end{equation}
Since $\Phi$ satisfies the cone condition, by Lemma \ref{c L=0}, we construct the  measure $\mu_n$   by \eqref{def_msu}. Given  $x\in J$ and $r>0$,    by \eqref{def_DFk},  for each $\bu\in D(B(x,r), k)$,  it is clear  that $|J_{\bu^*}|>2r$ and $J_{\bu^*}\cap B(x,r)\not= \emptyset$.
Since $\Phi$ satisfies cone condition, by Lemma \ref{cone}, we have
$$
\#D(B(x,r), k)\le M_1 \#I_k.
$$

Recall \eqref{mu equality}, that is
$$
\mu_n(B(x,r)\le\sum_{k=k_{B(x,r)}^-}^{k_{B(x,r)}^+}\sum_{\mathbf{u}\in D(B(x,r), k)}\frac{\|D\varphi_\mathbf{u}\|^t C^{2t}}{\sum_{\bu\in\Sigma^k}\|D\varphi_\bu\|^t}.
$$
By Lemma \ref{contraction}, we have $\|D\varphi_\bu\|\leq C_1|J_\bu|\le C_1 r$. Combining it with  \eqref{1MC<11}, we have that
$$
\sum_{\mathbf{u}\in D(B(x,r), k)}\frac{\|D\varphi_\mathbf{u}\|^t C^{2t}}{\sum_{\bu\in\Sigma^k}\|D\varphi_\bu\|^t}\leq  \frac{(C_1r)^{t} C^{2t}}{e^{M\underline P(t)k}}  M_1 \#I_k\leq C_5 r^{t} e^{M(m_\Phi-\underline P(t))k},
$$
where $C_5$ is a constant independent of $r$  and $k$.
Choose $n>\max\{K, k^+_{B(x,r)}\}$. By \eqref{def_Aset},  it follows that
\begin{eqnarray*}
\mu_n(B(x,r)) &\le& C_5 r^{t} \sum_{k=k_{B(x,r)}^-}^{k_{B(x,r)}^+}{e^{M(m_\Phi-\underline P(t))k}} \le C_5\sum_{k=0}^{\infty}{e^{M(m_\Phi-\underline P(t))k}}r^t.
\end{eqnarray*}

Since $\{\mu_n\}$ is bounded on $J$, there exists $\{\mu_{n_k}\}$ weakly convergent  to a measure $\mu$ (see \cite{BP}), and it follows that
$$
\mu(B(x,r)\le\liminf_{n\to\infty}\mu_n(B(x,r))\le C_5\sum_{k=0}^{\infty}{e^{M(m_\Phi-\underline P(t))k}} r^{t}.
$$
By the  mass distribution principle in \cite{Fal}, it follows that $\hdd E\ge t'$ for all $t<s_*$, and we have $\hdd E\ge s_*$.
\end{proof}

\section{Dimensions of the non-autonomous infinite conformal set} \label{sec_InfIFS}

In this section, we study the Hausdorff dimensions of non-autonomous infinite conformal sets by applying the strategy in Subsection \ref{subsec IIFS}.

Let $\Phi$ be a non-autonomous infinite conformal iterated function system. Recall that  $M_n=\max_{\mathbf{u} \in \Sigma^n} \{\|D\varphi_\mathbf{u}\|\}$ and
\begin{eqnarray*}
s^*&=&\inf\{t: \overline P(t)<0\}=\sup\{t: \overline P(t)>0\},  \\
s_* &=&\inf\{t: \underline P(t)<0\}=\sup\{t: \underline P(t)>0\}.
\end{eqnarray*}

We  assume that $\{\|D\varphi_{n,k}\|\}_{k=1}^\infty$ is monotonically decreasing, and we have
$$
\overline c_n=\|D\varphi_{n, 1}\|.
$$

\begin{proof}[Proof of Theorem \ref{thm_infinite}]
By Proposition \ref{lim=0}, there exists a finite subsystem $\Phi'$ satisfying $\lim_{n\to +\infty} \frac{\log\#(I'_n)}{\log M'_n}=0$
and $\underline P'(t)=\underline P(t)$ for each $t\in (0, d)$.
Let $E'$ be the non-autonomous finite conformal set of $\Phi'$. Since $\overline c_n= \overline c_n'$ and $M_n\ge M_n'$, we have $\lim_{n\to\infty}\frac{\log\underline c'_n}{\log M'_n} =0.$
By Proposition \ref{lb packing}, we have
$$
\ubd E\ge\pkd E\ge\pkd E'\ge s^*.
$$

Furthermore, since  $\mathcal{L}^d(\partial J)=0$,
by Lemma \ref{ub hausdorff} and Corollary \ref{thm_Hdim g}, we have $\hdd E=\hdd E'=s_*$.
\end{proof}

Given  the non-autonomous infinite conformal system $\Phi$ satisfying the cone condition.   Rempe-Gillen and Urbański  \cite{RM}  showed that if   the following hold for all $t\in(0,d)$ and all $\epsilon>0$:
\begin{itemize}
\item[(1).]The sum $\sum_{k\in I_n}\|D\varphi_{n, k}\|^t$ are either infinite for all $n$ or finite for all $n$.
\item[(2).]If $\sum_{k\in I_n}\|D\varphi_{n, k}\|^t<\infty$, then $
\lim_{n\to\infty}\frac{\sum_{k\le e^{\epsilon n}}\|D\varphi_{n, k}\|^t}{\sum_{k\in I_n}\|D\varphi_{n, k}\|^t}=1.
$
\item[(3).]If $\sum_{k\in I_n}\|D\varphi_{n, k}\|^t=\infty$, then $\lim_{n\to\infty}\sum_{k\le e^{\epsilon n}}\|D\varphi_{n, k}\|^t=\infty.$
\end{itemize}
Then the Hausdorff dimension of the non-autonomous conformal set is equal to $s_*$.

Compared with their conclusions, since $\lim_{k\to\infty}\frac{k}{-\log M_k}\le\frac{1}{-\log c}$, we   reduce the requirements for the convergence rate of $\sum_{k\in I_n}\|D\varphi_{n, k}\|^t$, but we need the condition $\lim_{k\to +\infty} \frac{\log\overline c_k}{\log M_k}=0$
holds in Theorem \ref{thm_infinite}.

The following conclusion is a consequence of  Corollary \ref{mphi}. 

\begin{cor}\label{Cor_RUI}
Let $E$ be the non-autonomous infinite conformal set of the NICIFS $\Phi$ satisfying OSC and cone condition with $m_\Phi=\inf\{\underline P(t): \underline P(t)>0\}>0$.
Suppose there exists $0<M<1$ such that for all $t\in(0,d)$, it satisfies
\begin{itemize}
\item[(1).]The sum $\sum_{k\in I_n}\|D\varphi_{n, k}\|^t$ are either infinite for all $n$ or finite for all $n$.
\item[(2).] If $\sum_{k\in I_n}\|D\varphi_{n, k}\|^t<\infty$, then
$$
\lim_{n\to\infty}\frac{\sum_{k\le {e^{Mm_\Phi n}}}\|D\varphi_{n, k}\|^t}{\sum_{j\in I_n}\|D\varphi_{n, j}\|^t}=1.
$$
\item[(3).]If $\sum_{j\in I_n}\|D\varphi_{n, k}\|^t=\infty$, then $\lim_{n\to\infty}\sum_{k\le {e^{Mm_\Phi n}}}\|D\varphi_{n, k}\|^t=\infty.$
\end{itemize}
Then we have $\hdd E=s_*$, where $s_*$ is given by \eqref{def_s*}.
\end{cor}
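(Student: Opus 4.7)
The plan is to reduce to the finite case: I will construct a finite subsystem $\Phi'$ whose attractor $E' \subset E$ has Hausdorff dimension $s_*$, and then combine this with the general upper bound on $\hdd E$ to obtain equality. After reordering within each $I_n$ so that $\|D\varphi_{n,k}\|$ is non-increasing in $k$, set $I_n' = \{1, 2, \ldots, \lfloor e^{Mm_\Phi n}\rfloor\}$ and let $\Phi' = \{\Phi_n'\}_{n \ge 1}$ be the resulting NFCIFS, with attractor $E' \subset E$. Since the base set $J$ is unchanged, $\Phi'$ inherits OSC and the cone condition from $\Phi$, and by construction
$$
\limsup_{n\to\infty} \frac{\log \#I_n'}{n\, m_\Phi} \le M < 1.
$$

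The main technical step is to verify $\overline{P}'(t) = \overline{P}(t)$ and $\underline{P}'(t) = \underline{P}(t)$ for every $t \in (0, d)$, following the strategy of Proposition \ref{lim=0} and Corollary \ref{limsup<1} specialised to the single threshold $\epsilon = Mm_\Phi$. When $\sum_{k \in I_n} \|D\varphi_{n,k}\|^t$ is finite for all $n$, hypothesis (2) gives $\sum_{k \in I_n} \|D\varphi_{n,k}\|^t \le (1+\delta)\sum_{k \in I_n'}\|D\varphi_{n,k}\|^t$ for every $\delta > 0$ and all sufficiently large $n$; Lemma \ref{ap by finite} then yields $\underline{P}'(t) \ge \underline{P}(t) - \delta D^{2d}$ and $\overline{P}'(t) \ge \overline{P}(t) - \delta D^{2d}$, and letting $\delta \to 0$ together with the obvious reverse inequalities gives equality. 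When $\sum_{k \in I_n}\|D\varphi_{n,k}\|^t = \infty$ for all $n$, both pressures equal $+\infty$; hypothesis (3) combined with the bounded-distortion lower bound
$$
\sum_{\bu \in {\Sigma'}^n} \|D\varphi_\bu\|^t \ge C^{-2tn} \prod_{k=1}^n \sum_{j \in I_k'} \|D\varphi_{k,j}\|^t
$$
forces $\overline{P}'(t) = \underline{P}'(t) = +\infty$ as well (by Cesàro, since $\log \sum_{j \in I_k'}\|D\varphi_{k,j}\|^t \to \infty$).

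Once these pressure equalities are established, $m_{\Phi'} = m_\Phi > 0$ and $s_*' = s_*$, so Corollary \ref{mphi} applies to the finite system $\Phi'$ and yields $\hdd E' = s_*' = s_*$. Combining this with the inclusion $E' \subset E$ and the general upper bound $\hdd E \le s_*$ from Theorem \ref{ub hausdorff} (which needs only OSC) concludes $\hdd E = s_*$. The main obstacle is the pressure-equality step in the divergent-sum case, since Lemma \ref{ap by finite} assumes finiteness of the per-level sums; but that case is handled cleanly by the bounded-distortion product bound, which only requires divergence of the truncated sums provided by hypothesis (3).
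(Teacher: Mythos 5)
Your proof is correct, and it reaches the same destination by the same overall route as the paper: build a finite truncated subsystem $\Phi'$ with matching pressure functions and sub-exponential growth $\limsup_n \frac{\log\#I_n'}{m_\Phi n}<1$, invoke Corollary~\ref{mphi} to get $\hdd E'=s_*'=s_*$, and combine with the general upper bound $\hdd E\le s_*$ from Theorem~\ref{ub hausdorff}. The difference is in how the subsystem is produced. The paper cites Corollary~\ref{limsup<1} (with $\beta_n=e^{-2Mm_\Phi n}$) as a black box, which in turn rests on the multi-layer diagonalization of Proposition~\ref{lim=0}; that machinery is designed for the situation where hypothesis (2)/(3) is assumed for a whole family of thresholds $\beta_n^{-\epsilon}$, and applying it here is somewhat indirect — indeed, chasing the stated conclusion $\limsup \frac{\log\#I_n'}{-\log\beta_n}<1$ with $-\log\beta_n=2Mm_\Phi n$ only yields $\limsup\frac{\log\#I_n'}{m_\Phi n}<2M$, which is not manifestly $<1$. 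You avoid this entirely by exploiting that the corollary's hypotheses already fix the single threshold $e^{Mm_\Phi n}$: taking $I_n'=\{1,\dots,\lfloor e^{Mm_\Phi n}\rfloor\}$ directly gives $\frac{\log\#I_n'}{m_\Phi n}\le M<1$, and the pressure equality then follows from Lemma~\ref{ap by finite} in the convergent case (for every $\delta>0$, hypothesis (2) gives \eqref{app} for large $n$, so $\underline P'(t)\ge\underline P(t)-\delta D^{2d}$, and $\delta\to0$ does the rest) and from the Ces\`aro/product bound in the divergent case (hypothesis (3) makes $\log\sum_{j\in I_k'}\|D\varphi_{k,j}\|^t\to\infty$, forcing $\underline P'(t)=\infty=\underline P(t)$). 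This is cleaner and, in the divergent branch, supplies a detail the paper leaves implicit. The only trivial slip is the constant in your product bound: iterating Lemma~\ref{cor_subMul} gives $\|D\varphi_\bu\|\ge C^{-(n-1)}\prod_k\|D\varphi_{k,u_k}\|$, hence $C^{-t(n-1)}$ rather than $C^{-2tn}$, but since your stated version is weaker it still yields the same Ces\`aro conclusion.
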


\begin{proof}
Since $E$ satisfies OSC, by Theorem  \ref{ub hausdorff}, it  follows that $\hdd E \leq s_*$.

Let $\beta_n =e^{-2M m_\Phi n}$. By Corollary \ref{limsup<1}, there exists a finite subsystem $\Phi'$ such that
$$
\limsup_{n\to +\infty} \frac{\log\#(I'_n)}{m_\Phi n}<1.
$$
 Since $\Phi'$ satisfies OSC and cone condition, by   Corollary \ref{mphi}, we have $\hdd E'=s_*'$, where $s_*'$ is given by \eqref{def_s*} with respect to the subsystem $\Phi'$. By Corollary \ref{limsup<1}, we obtain $\hdd E \geq \hdd E'=s_*'=s_*$, and the conclusion holds.
\end{proof}


\begin{thebibliography}{ABCD}
		\bibitem{Ban1} A.~Banaji.
		\newblock Generalised intermediate dimensions.
		\newblock{\em Monatsh. Math.}, 202, 465--506, 2023.
		
\bibitem{BR} A.~Banaji and A.~Rutar.
		\newblock Attainable forms of intermediate dimensions.
		\newblock{\em Ann. Fenn. Math.}, 47, 939--960, 2022.

		\bibitem{BJ1} A.~Banaji and J.~M. Fraser.
		\newblock Intermediate dimensions of infinitely generated attractors.
		\newblock{\em Trans. Amer. Math. Soc.}, 376, 2449--2479, 2023.
		
		\bibitem{BJ2} A.~Banaji and J.~M. Fraser.
		\newblock Assouad type dimensions of infinitely generated self-conformal sets.
		\newblock{\em Nonlinearity}, 37, No. 045004, 32, 2024.
		
		\bibitem{BanKol} A.~Banaji and I.~Kolossv\'{a}ry.
		\newblock Intermediate dimensions of Bedford-McMullen carpets with applications to Lipschitz equivalence.
		\newblock{\em Adv. Math.}, 449, No. 109735, 69, 2024.

\bibitem{BHR19} B. B{\'a}r{\'a}ny, M. Hochman and A. Rapaport.
\newblock Hausdorff dimension of planar self-affine sets and measures.
\newblock {\em Invent. Math.}, 216, 601--659, 2019.

		\bibitem{Baran07} K.~Bara{\'n}ski.
		\newblock Hausdorff dimension of the limit sets of some planar geometric  constructions.
		\newblock {\em Adv. Math.}, 210, 215--245, 2007.

		\bibitem{BP} P. Billingsley.
		\newblock{\em Convergence of probability measures 2rd Ed},
		\newblock John Wiley, 1999.

		\bibitem{Bedfo84} T.~Bedford.
		\newblock {\em Crinkly curves, {M}arkov partitions and box dimensions in   self-similar sets}.
		\newblock PhD thesis, University of Warwick, 1984.
		

		
		
		\bibitem{BFF}S.~A. Burrell, K.~J. Falconer and J.~M. Fraser.
		\newblock Projection theorems for intermediate dimensions.
		\newblock{\em J. Fractal Geom.}, 8, 95--116, 2021.


	\bibitem{CM} Z.~Chen and J.~J. Miao.
		\newblock Entropies and Pressures for Nonautonomous Dynamical Systems.
		\newblock arXiv 2502.21149, 2025.
		
\bibitem{Falco88} K.~J. Falconer.
\newblock The Hausdorff dimension of self-affine fractals.
\newblock {\em Math. Proc. Cambridge Philos. Soc.}, 103, 339--350, 1988.

		\bibitem{Fal} K.~J. Falconer.
		\newblock {\em Fractal Geometry - Mathematical Foundations and Applications 3rd Ed},
		\newblock John Wiley, 2014.
		
	
\bibitem{Falco92} K.~J. Falconer.
\newblock The dimension of self-affine fractals. {II}.
\newblock {\em Math. Proc. Cambridge Philos. Soc.}, 111, 169--179, 1992.

\bibitem{Falco05} K.~J.~Falconer.
\newblock Generalized dimensions of measures on self-affine sets,
\newblock {\em Nonlinearity }  12, 877--891,  1999.


		
		\bibitem{FFK}
		K.~J. Falconer, J.~M. Fraser and T.~Kempton.
		\newblock Intermediate dimensions.
		\newblock{\em Math.Zeit.}, 296, 813-830, 2020.
		
\bibitem{Feng23}D. Feng.
\newblock Dimension of invariant measures for affine iterated function systems
Feng.
\newblock {\em Duke Math. J.}  172, 701–774, 2023.


		\bibitem{Fra2} J.~M. Fraser.
		\newblock{\em Assouad Dimension and Fractal Geometry},
		\newblock Cambridge University Press, Cambridge, 2021.
		
		
		\bibitem{Fra21}J.~M. Fraser.
		\newblock On H$\ddot{o}$lder solutions to the spiral winding problem.
		\newblock {\em Nonlinearity}, 34, 3251--3270, 2021.
		
		\bibitem{Fra1} J.~M. Fraser.
		\newblock Interpolating between dimensions.
		\newblock{\em Fractal geometry and stochastics VI}, 3-24, 2021.
		
		

		
		

		
		
		\bibitem{FY} J.~M. Fraser and H.~Yu.
		\newblock New dimension spectra: finer information on scaling
		and homogeneity.
		\newblock{\em Adv. Math.}, 329, 273-328, 2018.

                      \bibitem{GM22}Y. Gu and J. J. Miao. Dimensions of a class of self-affine Moran sets. {\em J. Math. Anal. Appl.}, \textbf{513} (2022), 126210.
                      \bibitem{GHM} Y. Gu, C. Hou  and J. J. Miao.  Dimensions of a class of nonautonomous carpets and measures on  $R^2 $  {\em J. Fractal Geom. } 12 (2025), no. 1-2, 35--66.
		
		\bibitem{GM} Y.~Gu and J.~J. Miao.
		\newblock Dimension theory of non-autonomous iterated function systems.
		\newblock arXiv 2309.08151, 2023.
		
\bibitem{HocRap} M.  Hochman and A.  Rapaport
\newblock{ Hausdorff dimension of planar self-affine sets and measures with overlaps.}
\newblock{\em J. Eur. Math. Soc.} 24, 2361--2441, 2022.



\bibitem{HZ} M. Holland and Y. Zhang.
		\newblock Dimension results for inhomogeneous Moran set constructions.
		\newblock { \em Dyn. Syst.}, 28, 222--250, 2013.

\bibitem{Hua}S. Hua.
		\newblock The dimensions of generalized self-similar sets.
		\newblock{\em Acta Math. Appl. Sinica }, 17, 551--558, 1994.
				
		\bibitem{HL96} S. Hua and W. Li.
		\newblock Packing dimension of generalized Moran sets.
		\newblock { \em Progr. Natur. Sci. (English Ed.)}, 6, 148--152, 1996.

\bibitem{HRWW}S. Hua, H. Rao, Z. Wen  and J. Wu.
		\newblock On the structures and dimensions of Moran sets.
		\newblock{\em Sci.China Ser.A }, 43, 836--852, 2000.
		
		\bibitem{Kol2022} I.~Kolossv\'{a}ry.
		\newblock An upper bound for the intermediate dimensions of Bedford-McMullen carpets.
		\newblock{\em J. Fractal Geom.}, 9, 151--169, 2022.
		
		
		\bibitem{Kol1}I.~Kolossv\'{a}ry.
		\newblock On the intermediate dimensions of Bedford-McMullen carpets.
		\newblock arXiv 2006.14366, 2020.
		
		
		\bibitem{LalGa92} S.~P. Lalley and D.~Gatzouras.
		\newblock Hausdorff and box dimensions of certain self-affine fractals.
		\newblock {\em Indiana Univ. Math. J.}, 41, 533--568, 1992.
		
		
		\bibitem{LLMX} W.~Li, W.~Li,  J.~J.  Miao and L. Xi.
		\newblock Assouad dimensions of Moran sets and Cantor-like sets.
		\newblock {\em Front. Math. China}, 11: 705-722, 2016.
		
		
		\bibitem{McMul84} C.~McMullen.
		\newblock The {H}ausdorff dimension of general {S}ierpi\'nski carpets.
		\newblock {\em Nagoya Math. J.}, 96,1--9, 1984.

	
		
\bibitem{DM}R. D.~ Mauldin and M.~Urbanski.
		\newblock Dimensions and measures in infinite iterated function systems.
		\newblock{\em Proc. London Math. Soc.(3)}, 73, 105–154, 1996.

\bibitem{DM_1}R. D.~ Mauldin and M.~Urbanski.
		\newblock  Conformal iterated function systems with applications to the geometry of continued fractions.
		\newblock{\em Trans. Amer. Math. Soc. 351}, 12, 4995–5025, 1999.

\bibitem{MorShm} I. D.  Morris and P.  Shmerkin.
\newblock{On equality of Hausdorff and affinity dimensions, via self-affine measures on positive subsystems.}
\newblock{\em Trans. Amer. Math. Soc.}  371, 1547--1582, 2019.

\bibitem{MP} P. A. P. Moran.
		\newblock Additive functions of intervals and Hausdorff measure.
		\newblock{\em Proc. Cambridge Philos. Soc.}, 42, 15–23, 1946.



	\bibitem{RM} L.~Rempe-Gillen and M.~Urbanski.
		\newblock Non-autonomous conformal iterated function systems and Moran-set constructions.
		\newblock{\em Trans. Amer. Math. Soc.}, 368, 1979--2017, 2016.



\bibitem{PW} P. Walters.
		\newblock {\em An introduction to ergodic theory },
		\newblock GTM79. Springer-Verlag, Inc., New York, 1982.

\bibitem{WR} R. Wilson.
		\newblock {\em Introduction to graph theory 4th Ed },
		\newblock Longman, Harlow, 1996.

		\bibitem{Wen00} Z. Wen.
		\newblock {\em Mathematical Foundation of Fractal Geometry},
		\newblock Shanghai Scientific and Technological Education Publishing House, 2000.
		










		
	\end{thebibliography}
\end{document}